\newcommand{\conceft}{\text{ConceFT}}
\newcommand{\diag}{\text{diag}}
\newcommand{\NN}{\mathbb{N}}
\newcommand{\RR}{\mathbb{R}}
\newcommand{\CC}{\mathbb{C}}
\newcommand{\ud}{\textup{d}}
\newcommand{\argmax}{\operatornamewithlimits{argmax}}
\newcommand{\oeps}{\overline{\epsilon}}
\theoremstyle{definition}
\newtheorem{theorem}{Theorem}[section]
\newtheorem{defn}[theorem]{Definition}
\newtheorem{prop}[theorem]{Proposition}
\newtheorem{lemma}[theorem]{Lemma}
\theoremstyle{remark}
\newtheorem*{remark}{Remark}
\title[$\conceft$]{$\conceft$: Concentration of Frequency and Time via a multitapered synchrosqueezed transform}
\author[Ingrid Daubechies]{Ingrid Daubechies}
\address{Mathematics, Duke University} 
\author[Yi (Grace) Wang]{Yi (Grace) Wang}
\address{Mathematics, Syracuse University}
\author[Hau-Tieng Wu]{Hau-Tieng Wu}
\address{Mathematics, University of Toronto}
\email{hauwu@math.toronto.edu}
\begin{document}

\maketitle

\begin{abstract}
A new method is proposed to determine the time-frequency content of time-dependent
signals consisting
of multiple oscillatory components, with time-varying amplitudes and instantaneous frequencies.
Numerical experiments as well as a theoretical analysis are presented to assess its 
effectiveness.
\end{abstract}

\section{Introduction}\label{Section:Intro}

Oscillatory signals occur in a wide range of fields, including geophysics, biology, medicine, finance and social dynamics. They often consist of several different oscillatory components, the nature, time-varying behavior and interaction of which reflect properties of the underlying system. In general, we want to assess the number, strength and rate of oscillation of the different components constituting the signal, to separate noise from signal, and to isolate individual components; efficient and robust extraction of this information from an observed signal will help us better describe and quantify the underlying dynamics that govern the system. For each of the quantities of interest listed, we thus want an estimator that is consistent, that has (ideally) small variance and that produces results robust to different types of noise.

If the observed signal $f$ can be written as a finite sum of so-called harmonic components, i.e. $f(t)=\sum_{\ell} a_{\ell}\cos(2\pi\xi_{\ell} t+\delta_\ell)$, where $a_{\ell}>0$ (respectively $\xi_{\ell}>0$) represents the strength or amplitude (respectively frequency) of the $\ell$-th component, then one can recover the $a_{\ell}$ and $\xi_{\ell}$ from time-samples of $f(t)$ via the Fourier transform $\hat{f}$ of $f$, defined by $\hat{f}(\xi):=\int f(t)e^{-i2\pi \xi t}\ud t$. (If the $\xi_{\ell}$ are all integer multiples of a common $1/t_0$, then the integral can be taken over an interval of length $t_0$; when this is not the case, one can resort to integrals over long time intervals and average. Typically only discrete samples $f(t_n),\, n \in \mathbb{Z}$, are known, rather than the continuous time course $f(t),\, t \in \mathbb{R}$, and the integrals are estimated by quadrature.) However, oscillatory signals of interest often have more complex behavior. We shall be interested in particular in signals that are still the combination of ``elementary'' oscillations, but in which both the amplitude and the frequency of the components are no longer constant; they can be written as 
\begin{equation}\label{Introduction:eq1}
f(t)=\sum_{k=1}^KA_k(t)\cos(2\pi\varphi_k(t)),
\end{equation}
where $K\in\NN$, $A_k(t)>0$ and $\varphi'_k(t)>0$ for all $k$, but $A_k(t)$ and $\varphi'_k(t)$ are not constants. One can compute the Fourier transform $\hat{f}$ of such signals, and recover $f$ from $\hat{f}$ (this can be validly done for a much wider class of functions), but it is now less straightforward to determine the time-varying amplitudes $A_k(t)$ and the so-called ``instantaneous frequencies'' $\varphi'_k(t)$ from $\hat{f}$. Although the time-local behavior of the oscillations, and their deviation from perfect periodicity, cannot be captured by the Fourier transform in an easily ``readable'' way, an accurate description of this instantaneous behavior is nevertheless important in many applications, both to understand the system producing the signal and to predict its future behavior. Examples in the medical field include studies of the circadian \cite{Golombek_Rosenstein:2010,Takeda_Maemura:2011} and cortical rhythms \cite{Wang:2010}, or of heart-rate \cite{Ahmad_Tejuja_Newman_Zarychanski_Seely:2009,Lewis_Furman_McCool_Porges:2012,Lin_Wu_Tsao_Yien_Hseu:2014} and respiratory variability \cite{Wu_Hseu_Bien_Kou_Daubechies:2013,Seely_Bravi_Herry_Longtin_Ramsay_Fergusson_McIntyre:2014,Baudin_Wu_Bordessoule_Beck_Jouvet_Frasch_Emeriaud:2014}, all widely studied to understand physiology and predict clinical outcomes. 

The last 50 years have seen many approaches, in applied harmonic analysis and signal processing, to develop useful analysis tools for signals of this type; this is the domain of {\em time-frequency (TF) analysis}. Several algorithms and associated theories have been developed 
and widely applied (see, e.g., the overview \cite{Flandrin:1999}); well known examples include the short time Fourier transform (STFT), continuous wavelet transform (CWT) and Wigner-Ville distribution (WVD). The main idea is often to ``localize'' a portion of the signal in time, and then ``measure'' the oscillatory behavior of this portion. For example, given a function $f\in L^2$, the {\it windowed} or {\it short time Fourier transform} (STFT) associated with a window function $h(t)$ can be defined as:
\[
V_f^{(h)}(t,\eta):=\int f(s)h(t-s)e^{-i2\pi \eta (t-s)}\ud s
\]
where $t\in\RR$ is the time, $\eta\in \RR^+$ is the frequency, $h$ is the window function chosen by the user -- a commonly used choice is the Gaussian function with kernel bandwidth $\sigma>0$, i.e. $h(t)=(2\pi\sigma)^{-1/2}e^{-t^2/(2\sigma^2)}$. (The overall phase factor $e^{-i2\pi \eta t}$ is not always present in the STFT, leading to the name {\it modified short time Fourier transform} (mSTFT) for this particular form in \cite{Thakur_Wu:2011}.) 

Other, more specialized methods, targeting in particular signals of type (\ref{Introduction:eq1}), include the empirical mode decomposition \cite{Huang_Shen_Long_Wu_Shih_Zheng_Yen_Tung_Liu:1998}, ensemble empirical mode decomposition \cite{Wu_Huang:2009}, the sparsity approach \cite{Tavallali_Hou_Shi:2014}, iterative convolution-\ filtering \cite{Lin_Wang_Zhou:2009,Huang_Wang_Yang:2009}, the approximation approach \cite{Chui_Mhaskar:2015}, non-local mean approach \cite{Galiano_Velasco:2014}, time-varying autoregression and moving average approach \cite{DeLivera_Alysha_Hyndman_Snyder:2011} as well as the synchrosqueezing transforms introduced and studied by some of us \cite{Daubechies_Maes:1996, Daubechies_Lu_Wu:2011, Wu_Flandrin_Daubechies:2011, Wu:2011Thesis, Thakur_Wu:2011}.  

All TF methods that target reasonably large classes of functions (as opposed to functions with such specific models that complete characterization requires only fitting a small number of parameters) must face the Heisenberg uncertainty principle, limiting how accurately oscillatory information can be captured over short time intervals; for toy signals specially designed to have precise TF properties (e.g., chirps), this typically expresses itself by a ``blurring'' or ``smearing out'' of their TF representation, regardless of the analysis tool used. {\it Reassignment methods} \cite{Kodera_Gendrin_Villedary:1978,Auger_Flandrin:1995,Chassande-Mottin_Auger_Flandrin:2003}, introduced in 1978 and recently attracting more attention again, were proposed to analyze and possibly counter this. Their main idea is to analyze the local behavior in the TF plane of portions of the  representation, and determine nearby possible TF concentration candidates that best explain it; each small portion  is then reallocated to its ``right'' place in the TF plane, to obtain a more concentrated TF representation that, one hopes, gives a faithful and precise rendering of the TF properties of the signal. Reassignment methods can be applied to very general TF representations \cite{Auger_Flandrin:1995,Flandrin:1999}; they can be adaptive as well \cite{Auger_Chassande-Mottin_Flandrin:2012}. It has been argued recently \cite{Galiano_Velasco:2014} that reassignment methods can be viewed as analogs of ``non-local means'' techniques commonly applied in image processing; this provides an intuitive explanation for their robustness to noise.

The synchrosqueezing transform (SST) can be viewed as a special reassignment method \cite{Auger_Flandrin:1995, Chassande-Mottin_Auger_Flandrin:2003,Auger_Chassande-Mottin_Flandrin:2012}. In SST, the STFT or CWT coefficients are reassigned {\it only} in the frequency ``direction'' \cite{Daubechies_Lu_Wu:2011,Wu:2011Thesis,Thakur:2014}; this preserves causality, making it possible to reconstruct each component with real-time implementation \cite{Chui_Lin_Wu:2014}. The STFT-based SST of $f$ is defined as
\[
S_f^{(h)}(t,\xi):=\lim_{\alpha \rightarrow 0} \int V_f^{(h)}(t,\eta)\, g_\alpha(\xi-\omega^{(h)}_f(t,\eta))\, \ud \eta,
\]
where $g_\alpha$  is an ``approximate $\delta$-function'' (i.e. $g$ is smooth and has fast decay, with $\int g(x)\ud x =1$, so that $g_\alpha(t):=\frac{1}{\alpha}g(\frac{t}{\alpha})$ tends weakly to the delta measure $\delta$ as $\alpha\to 0$), and with $\omega^{(h)}_f$ defined by
\[
\omega^{(h)}_f(t,\eta):= \frac{-i\partial_t V_f^{(h)}(t,\eta)}{2\pi V_f^{(h)}(t,\eta)} \,\mbox{ if }\, V_f^{(h)}(t,\eta)\neq 0\,,\, \mbox{ and } 
\,\omega^{(h)}_f(t,\eta):= -\infty \, \mbox{ otherwise}\nonumber.
\]
The SST for CWT is defined similarly; see \cite{Daubechies_Lu_Wu:2011,Chen_Cheng_Wu:2014}, or Section \ref{Section:Conceftalgorithm}. SST was proposed originally for sound signals \cite{Maes:1995,Daubechies_Maes:1996}; its theoretical properties have been studied extensively \cite{Daubechies_Lu_Wu:2011,Wu:2013, Chen_Cheng_Wu:2014,Meignen_Oberlin_McLaughlin:2012, Chui_Lin_Wu:2014,Thakur:2014,Lin_Flandrin_Wu:2015}, including its stability to different types of noise \cite{Brevdo_Fuckar_Thakur_Wu:2012,Chen_Cheng_Wu:2014}. Several variations of the SST have been proposed \cite{Li_Liang:2012,Meignen_Oberlin_McLaughlin:2012, Yang:2014,Oberlin_Meignen_Perrier:2015,Xi_Cao_Chen_Zhang_Jin:2015}; in particular, the SST-approach can also be used for other TF representations, such as the wave packet transform \cite{Yang:2014}, and it can be extended to two-dimensional signals (such as images) \cite{Yang_Ying:2013,Yang_Ying:2014}. In addition, its practical usefulness has been demonstrated in a wide range of fields, including medicine \cite{Stankovski_Duggento_McClintock_Stefanovska:2012,Iatsenko_Bernjak_Stankovski_Shiogai_Owen_Clarkson_McClintock_Stefanovska:2013,Lin_Wu_Tsao_Yien_Hseu:2014, Wu_Hseu_Bien_Kou_Daubechies:2013, Wu_Chan_Lin_Yeh:2014, Baudin_Wu_Bordessoule_Beck_Jouvet_Frasch_Emeriaud:2014,Wu_Talmon_Lo:2015,Lin:2015Thesis}, mechanics \cite{Li_Liang:2012a,Feng_Chen_Liang:2015,Xi_Cao_Chen_Zhang_Jin:2015}, finance \cite{Guharay_Thakur_Goodman_Rosen_Houser:2013,Vatter_Wu_Chavez-Demoulin_Yu:2013}, geography \cite{Wang_Gao_Wang:2014, Herrera_Han_vanderBaan:2014, Tary_Herrera_Han_vanderBaan:2014}, denoising \cite{Meignen_Oberlin_McLaughlin:2012}, atomic physics \cite{Li_Sheu_Laughlin_Chu:2013,Sheu_Hsu_Wu_Li_Chu:2014,Li_Sheu_Laughlin_Chu:2015} and image analysis \cite{Yang_Lu_Ying:2014,Yang_Lu_Brown_Daubechies_Ying:2014}.

The SST approach can extract the instantaneous frequency and reconstruct the constitutional oscillatory components of a signal of type (\ref{Introduction:eq1}) in the presence of noise \cite{Brevdo_Fuckar_Thakur_Wu:2012,Chen_Cheng_Wu:2014}. However, its performance suffers when SNR gets low: as the noise level increases, and even before it completely obscures the main concentration in the TF plane of the signal, spurious concentration areas appear elsewhere in the TF plane, caused by correlations introduced by the overcomplete STFT or CWT analysis tool. The effect of these misleading perturbations, which downgrade the quality of the results, can be countered, to some extent, by {\it multi-tapering}.

Multi-tapering is a technique originally proposed to reduce the variance and hence stabilize power spectrum estimation in the spectral analysis of stationary signals \cite{Thomson:1982, Percival:1993, Babadi_Brown:2014}. Sampling the signal during only a finite interval leads to artifacts, traditionally reduced by tapering; an unfortunate side effect of tapering is to diminish the impact of samples at the extremes of the time interval. Thomson \cite{Thomson:1982} showed that one can nevertheless exploit optimally the information provided by the samples at the extremities, by using several orthonormal functions as tapers: the average of the corresponding power spectra is a good estimator with reduced variance. This technique has since been applied widely \cite{Percival:1993,Fraser_Boashash:1994,Farry_Buaniuk_Walker:1995,Bayram_Baraniuk:1996,Xu_Haykin_Racine:1999}. Multi-tapering was later extended to non-stationary TF analysis by combining it with reassignment \cite{Xiao_Flandrin:2007,Lin_Hseu_Yien_Tsao:2011,Orini_Bailon_Mainardi_Laguna_Flandrin:2012}: a more robust ``combined'' reassigned TF representation is obtained by picking orthonormal ``windows'' (used to isolate portions of the TF representation when working with a reassignment method), and averaging the reassigned TF representations determined by each of the individual windows. Heuristically, the concentration for a ``true'' constituting component of the signal will be in similar locations in the TF plane for each of the individual reassigned TF representations, whereas the spurious concentrations, artifacts of correlations between noise and the windowing function, tend to not be co-located and have a diminished impact when averaged. In the SST context a similar multi-taper idea was used by one of us in a study of anesthesia depth \cite{Lin_Wu_Tsao_Yien_Hseu:2014,Lin:2015Thesis}, in which $J$ different window functions $h_j,\, j=1,\ldots,J$ are considered, and the multi-taper SST (MTSST) is computed as follows:
\[
\texttt{MS}_f(t,\xi):=\frac{1}{J}\sum_{j=1}^J S_f^{(h_j)}(t,\xi).
\]
Using multiple tapers reduces artifacts, and the MTSST remains ``readable'' at higher noise levels than a ``simple'' SST \cite{Lin_Wu_Tsao_Yien_Hseu:2014,Lin:2015Thesis}. To optimally suppress noise artifacts it is tempting to consider increasingly larger $J$. However, the area in the TF plane over which the signal TF information is ``smeared out'' also increases (linearly) with $J$, and a balance needs to be observed; in the multi-taper reassignment method of \cite{Xiao_Flandrin:2007}, for instance, 6 Hermite functions were used (i.e. $J=6$.)

In this paper, we introduce a new approach to obtain better concentrated time-frequency representations, which we call {\it $\conceft$}, for {\it {\bf Conce}ntration in {\bf F}requency and {\bf T}ime}. It is based on STFT- or CWT-based SST, but the approach could be applied to yet other TF decomposition tools. The $\conceft$ algorithm will be defined precisely below, in Section \ref{Section:Conceftalgorithm}. Like MTSST, $\conceft$ starts from a multi-layered time-frequency representation, but instead of averaging the SST results obtained from STFT or CWT for orthonormal windows, which can be viewed as elements in a vector space of time-frequency functions, it considers many different projections in this same vector space, and averages the corresponding SSTs; for more details, see Section \ref{Section:Conceftalgorithm}. Section \ref{Section:Concefttheory} studies the theoretical properties of $\conceft$, and explains how it can provide reliable results under challenging SNR conditions; finally, in Section \ref{Section:Conceftnumerical}, we provide several numerical results.

To conclude this introduction, we illustrate $\conceft$ on a simulated signal, in which the clean signal $s(t)$ is composed of two oscillatory components: $s(t)=s_1(t)+s_2(t)$, where $s_1(t)=A_1(t)\cos(2\pi\varphi_1(t))\chi_{[3,12]}(t)$, and  $s_2(t)=A_2(t)\cos(2\pi\varphi_2(t))\chi_{[0,8]}(t)$ (here $\chi$ stands for the indicator function, $\chi_{[a,b]}(t) = 1$ if $a \leq t \leq b$, $\chi_{[a,b]}(t) = 0$ otherwise); $A_i(t)>0$ and $\varphi_i'(t)>0$ for $i=1,2$. This signal is sampled at rate $100$Hz, from $t=0$ to $t=12$ seconds. To these signal samples we add independent realizations of a fat-tailed noise $\xi$, which is identically-t4-Student-distributed with variance $2.036$. The left panels in Figure \ref{fig:Introduction:Example1} show the three constituents of the total (noisy) signal $Y(t)=s_1(t)+s_2(t)+\xi(t)$; note that each of $s_1$ and $s_2$ ``lives'' during only part of the full time observation interval; the fat-tailed nature of the noise causes the bursty behavior evident in the plot of $\xi(t)$. The individual plots of the $s_i$ show the amplitude modulations $A_i(t)$ of the $s_i$; Figure \ref{fig:Introduction:Example1} also graphs $\varphi_i'(t)>0$ for $i=1,2$. In addition, Figure \ref{fig:Introduction:Example1} shows the time course of both the clean signal $s(t)$ and the noisy signal $Y(t)$, at the same scale; their signal-to-noise ratio is $-0.85$, computed as $20\log_{10}\left(\frac{\text{std}(s(t))}{\text{std}(\xi(t))}\right)$, where $\text{std}$ stands for standard deviation. Figure \ref{fig:Introduction:Example2} shows several SST-based results for this (quite challenging) example. For the clean signal $s$, the ``mono-SST'' (STFT-based, with a Gaussian window) performs quite well, with only some artifacts at the onset and cessation of the $s_i$; many structured artifacts are visible in the mono-SST of the noisy signal $Y$. Both MTSST and $\conceft$ remove the onset and cessation artifacts for the clean $s$ (shown only for $\conceft$ in the figure, but similar for MTSST); the improvement is much more marked for the noisy signal $Y$: the spurious ``bubbles'' are suppressed to some extent in the MTSST-based representation (using 2 orthonormal windows: the same Gaussian and the next higher-order Hermite function); a more dramatic improvement is seen in the $\conceft$-representation corresponding to the same vector space of windows.

\begin{figure}[h!]
\begin{centering}
\includegraphics[width= \textwidth]{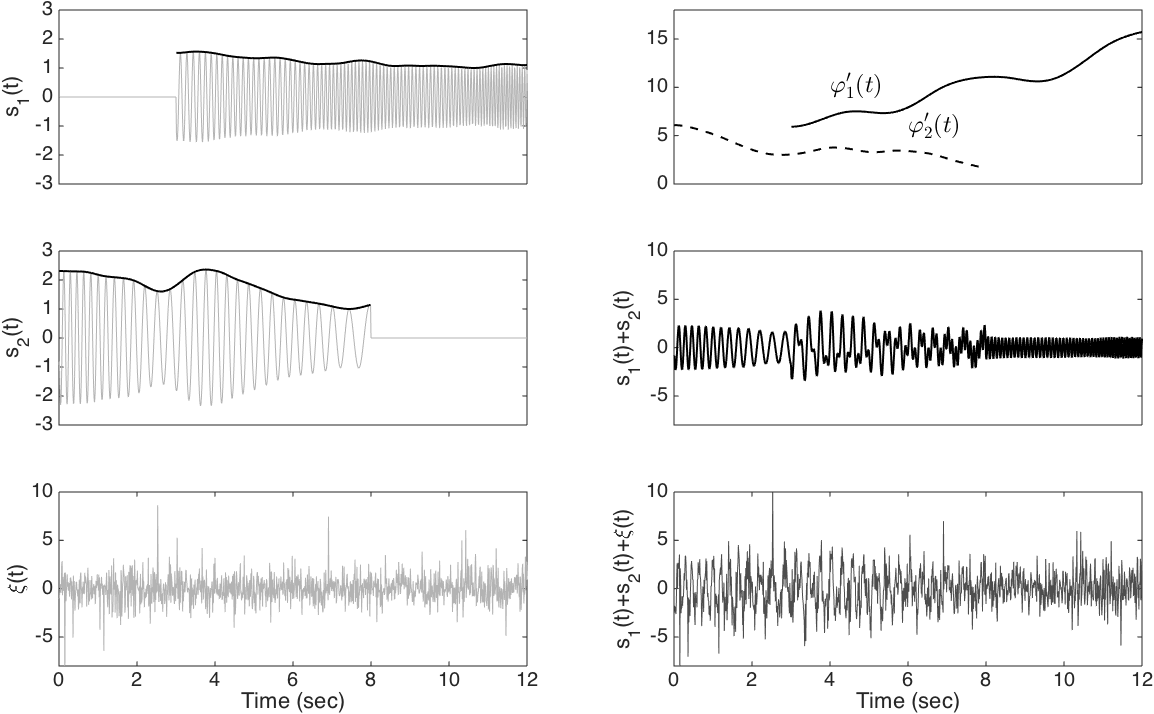}
\end{centering}
%\vspace*{-.7 in}

\caption{\label{fig:Introduction:Example1}Left panels: the three constituents of the noisy signal $Y(t)$: oscillatory components $s_1(t)$ (top), 
and $s_2(t)$ (middle), and the bursty iid t4-Student noise $\xi(t)$ (bottom). Note
that $s_1(t)\neq 0$ only for $t>3$, $s_2(t)\neq 0$ only for $t<8$ sec.; their
respective amplitudes $A_i(t)$ are plotted as envelopes for each. Right panels: plots of $\varphi'_1(t)$ (solid) and $\varphi'_2(t)$ (dashed) in top panel; the clean
signal $s=s_1+s_2$ (middle) and noisy signal, $Y(t)=s(t)+\xi(t)$ (bottom), plotted with the same scale.}
\end{figure}

\begin{figure}[h!]
\begin{centering}
\includegraphics[width= \textwidth]{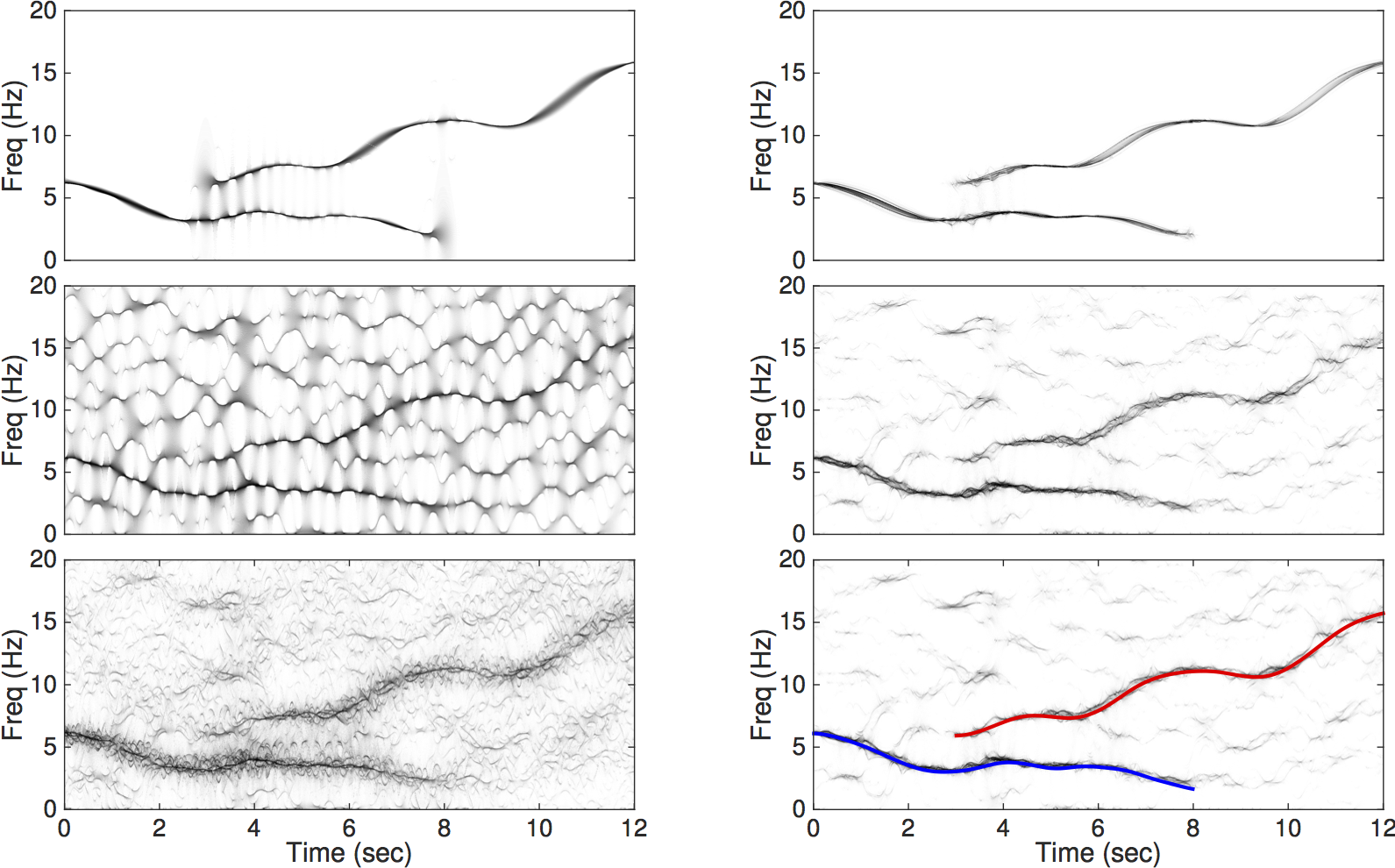}
\end{centering}

\caption{\label{fig:Introduction:Example2}Top left: STFT-based synchrosqueezing transform (SST) of the clean signal $s(t)$ for a Gaussian window $h$; middle
left: STFT-based SST of the noisy signal $Y(t)$, with the same window. Bottom
left: multi-taper SST of $Y(t)$, choosing the Gaussian and the next 5 Hermite functions as windows; this result is closer to the STFT-based SST of $s$. Top right: $\conceft$ of $s(t)$ based on the
same two Hermite functions; middle right: $\conceft$ of $Y(t)$ based on the
two Hermite functions; bottom right: same as middle right, with plots of
$\varphi'_1(t)$ and $\varphi'_2(t)$ superimposed.
}
\end{figure}

\section{The $\conceft$ algorithm}\label{Section:Conceftalgorithm}

We start by briefly reviewing SST. In the introduction, we defined STFT-based SST, discussed in more detail in \cite{Wu:2011Thesis,Thakur:2014}; to show that the situation is very similar for CWT-based SST, we discuss that case here; see \cite{Daubechies_Lu_Wu:2011,Chen_Cheng_Wu:2014} for details. We start with the wavelet $\psi$ with respect to which the CWT will be computed, which must necessarily have mean zero; that is, $\int \psi(t)\,dt =0$; let's also pick it to be a Schwartz function. 
We shall assume that we are dealing with real signals $f$; in this case the symmetry in $\xi$
of $\widehat{f}(\xi)$ makes it possible to consider only the ``positive frequency part'' of $f$, by
picking $\psi$ so that its Fourier transform $\widehat{\psi}$ is supported on $\mathbb{R}_{+}$. (The approach can be extended easily to handle complex signals as well, but notation becomes a bit heavier.)
Then the {\it Continuous Wavelet Transform} $W^{(\psi)}_f(a,b)$ of a tempered distribution $f$, with the variables $a$, $b$ standing for scale and time location, is defined as the inner product of $f$ with $\psi^{(a,b)}(t)=|a|^{-1/2}\psi((t-b)/a)$. 
Even if the Fourier transform $\hat{f}$ is very concentrated around some frequency $\omega_0$, the magnitude $|W^{(\psi)}_f(a,b)|$ of the CWT will be spread out over a range of scales $a$, corresponding to a neighborhood of $\omega_0$. However, the phase information of $W^{(\psi)}_f$ will still hold a ``fingerprint'' of $\omega_0$ on that whole neighborhood, in that $W^{(\psi)}_f(a,b)$ will show oscillatory behavior in $b$, with frequency $\omega_0$, for a range of different $a$. This is the motivation for the synchrosqueezing transform,  which shifts the CWT coefficients ``back'', according to certain reassignment rules determined by the phase information. More concretely, we set a {\it threshold} $\Gamma>0$, and then define 
\begin{align*}
\Omega^{(\psi,\Gamma)}_f(a,b):= \left\{
\begin{array}{ll}
\displaystyle\frac{-i\partial_b W^{(\psi)}_{f}(a,b)}{2\pi W^{(\psi)}_{f}(a,b)} & \mbox{when }|W^{(\psi)}_{f}(a,b)|>\Gamma\\
-\infty & \mbox{when }|W^{(\psi)}_{f}(a,b)|\leq \Gamma.
\end{array}
\right.
\end{align*}
where $\partial_b$ is the partial derivative with respect to $b$ (see the {\em Electronic
Supplementary Materials} -- or ESM -- for a remark
concerning robust numerical implementation); the hard threshold $\Gamma$ can be adjusted for best reduction of the numerical error and noise influence. The CWT-based synchrosqueezed transform (or CWT-based SST)  then moves the CWT coefficient $W^{(\psi)}_f(a,b)$ to the ``right'' frequency slot, using 
$\Omega^{(\psi,\Gamma)}_f(a,b)$ as guideline: 
\[
S^{(\psi,\Gamma,\alpha)}_{f}(b,\xi):=
\int_{\{a:\,|W^{(\psi)}_f(a,b)|>\Gamma\}}
W^{(\psi)}_f(a,b)\frac{1}{\alpha}\,g\!
\left(\frac{\xi-\Omega^{(\psi,\Gamma)}_f(a,b)}
{\alpha}\right)a^{-3/2}\ud a,
\]
where $0<\alpha\ll 1$ is chosen by the user, $g$ is a smooth function so that $\frac{1}{\alpha}g(\frac{\cdot}{\alpha})\to \delta$ in the weak sense as $\alpha\to 0$, and the factor $a^{-3/2}$ is introduced to ensure that the integral of $S^{(\psi,\Gamma,\alpha)}_{f}(b,\xi)$ over $\xi$ yields a close approximation to the original $f(b)$. For more details, we refer the reader to \cite{Daubechies_Lu_Wu:2011,Chen_Cheng_Wu:2014}.

Although both the CWT $W^{(\psi)}_f$ and its derived SST $S^{(\psi,\Gamma,\alpha)}_{f}$ depend on the choice of the reference wavelet $\psi$, this is much less pronounced for the SST; CWT-based SST corresponding to different reference wavelets lead to different but very similar TF representations. (Theoretical reasons for this can be found in \cite{Daubechies_Lu_Wu:2011,Chen_Cheng_Wu:2014}.) In particular, the dominant components in the TF representations are very similar. Moreover, even when the signal is contaminated by noise, these dominant components in the TF representations are not significantly disturbed \cite{Chen_Cheng_Wu:2014}. However, the distribution of artifacts across the TF representation, induced by the noise, as seen in e.g. the middle left panel of Figure \ref{fig:Introduction:Example2}, vary from one reference wavelet to another; this can be intuitively explained by observing that the CWT is essentially a convolution with (scaled versions of) the reference wavelet, so that the wavelet transforms of i.i.d. noise based on different {\em orthogonal} reference wavelets are {\em independent}. These observations lead to the idea of a multi-taper SST algorithm \cite{Lin_Wu_Tsao_Yien_Hseu:2014,Lin:2015Thesis}. In brief, given $J$ orthonormal reference wavelets $\psi_j$, $j=1,\ldots,J$, one determines the reassignment rules $\Omega^{(\psi_j, \Gamma)}_f(a,b)$, as well as the corresponding $S^{(\psi,\Gamma,\alpha)}_{f}(b,\xi)$, and then defines the MTSST by
\[
\texttt{MS}^{\Gamma,\alpha}_f(b,\xi):=\frac{1}{J}\sum_{j=1}^J S^{(\psi_j,\Gamma,\alpha)}_{f}(b,\xi).
\]
This suggests that averaging over a large number of orthonormal reference wavelets would smooth out completely the TF artifacts induced by the noise, as originally discussed for the reassignment method \cite{Xiao_Flandrin:2007}. However, in order for reassignment to make sense, the reference function, whether it is the window $h$ for STFT or the wavelet $\psi$ for CWT, must itself be fairly well concentrated in time and frequency, so that inner products with modulated window functions or scaled wavelets do not mix up different components and behaviors of the signal. On the other hand, there is a limit to how many orthonormal functions can be ``mostly'' supported in a concentrated region in the TF-plane -- by a rule of thumb generalizing the Nyquist sampling density one can find, for a region $\mathcal{R}$ in the TF-plane, only $\mbox{Area}(\mathcal{R})/(2\pi)$ orthonormal functions that are mostly concentrated on $\mathcal{R}$ \cite{Daubechies:1992}. This limits how many different orthonormal $\psi_j$ can be used in MTSST. 

$\conceft$ uses the different TF ``views'' provided by the CWT transforms $W^{(\psi_j)}_f$ in a different way, exploiting the {\it non-linearity} of the SST operation. (See the ESM for a sketch of an alternate way in which one could extend multi-taper CWT, not pursued in this paper, however.) For each choice of $\psi$, the collection of CWT $W^{(\psi)}_f$, where $f$ ranges over the class of signals of interest, span a subspace of $\mathcal{F}$, the space of all reasonably smooth functions of the two variables $a$, $b$. Different orthonormal $\psi_j$ generate different subspaces in $\mathcal{F}$; combined, they generate a larger subspace, in which one can define an infinite number of ``sections'', each corresponding to the collection of CWT generated by one reference wavelet. Each linear combination of the $\psi_j$ defines such a CWT-space, in which one can carry out the corresponding SST operation. For $\psi = \sum_{j=1}^J r_j \, \psi_j$, where $r_j\in\RR$, one has $W^{(\psi)}_f=\sum_{j=1}^J r_j\, W^{(\psi_j)}_f$; because synchrosqueezing is a highly nonlinear operation, the corresponding $S^{(\psi,\Gamma,\alpha)}_{f}$ are however not linear combinations of the $S^{(\psi_j,\Gamma,\alpha)}_{f}$. In practice, the artificial concentrations in the TF-plane, triggered by fortuitous correlations between the noise and the (overcomplete) $\psi^{(a,b)}$, occur at locations sufficiently different, for different choices of the vector $\boldsymbol{r}=(r_1,\ldots,r_J)$, that averaging over many choices of $\boldsymbol{r}$ successfully suppresses noise artifacts. 

More precisely, the CWT-based $\conceft$ algorithm proceeds as follows:
\begin{itemize}
\item Take $J$ orthonormal reference wavelets, $\psi_1,\ldots,\psi_J$, in the Schwartz space, with good concentration in the TF-plane. 
\item Pick $N$ random vectors $\boldsymbol{r}_n$, $n=1, \ldots, N$, of unit norm, in $\mathbb{R}^J$; that is, uniformly select $N$ samples in $S^{J-1}$.
\item For each $n$ between 1 and $N$, define $\psi_{[n]}:=\sum_{j=1}^J (\boldsymbol{r}_n)_j \,\psi_j$, and $W^{(\psi_{[n]})}_f=\sum_{j=1}^J (\boldsymbol{r}_n)_j\, W^{(\psi_j)}_f$.
\item Select the threshold $\Gamma >0$ and the approximation parameter $\alpha>0$, and evaluate, for each $n$ between 1 and $N$, the corresponding CWT-based SST of $f$ by computing the reassignment rule $\Omega^{(\psi_{[n]},\Gamma)}_f(a,b)$, and hence $S^{(\psi_{[n]}, \Gamma,\alpha)}_f(b,\xi)$, as defined above, with the minor adjustment that when the expression $\sum_{j=1}^J(\boldsymbol{r}_n)_j\, W^{(\psi_j)}_f(a,b)$ in the reassignment rule denominator has a negative real part, we switch to the vector $-\boldsymbol{r}_n$. 
\item The final $\conceft$ representation of $f$ is then the average
\begin{equation}
C^{\Gamma,\alpha}_f(b,\xi):= \frac{1}{N}\sum_{n=1}^N S^{(\psi_{[n]},\Gamma, \alpha)}_f(b,\xi).
\end{equation}
In practice, $J$ could be as small as $2$, while $N$ could be chosen as large as the user wishes. 
\end{itemize}
The square of the magnitude of $C^{\Gamma,\alpha}_f(b,\xi)$, 
\[
\widetilde{\texttt{P}}_f(b,\xi):=|C^{\Gamma,\alpha}_{f}(b,\xi)|^2\,,
\]
can be of interest in its own right, as an estimated {\it time-varying Power Spectrum} (tvPS) of $f$. 

STFT-based $\conceft$ representations are defined entirely analogously, based on the STFT-reassignment rule given in Section \ref{Section:Intro}. 

\section{Theoretical Results}\label{Section:Concefttheory}

In this section, we list and explain theoretical results about CWT-based $\conceft$. The detailed mathematical computations and proofs can be found in the ESM. Entirely similar results hold for STFT-based $\conceft$; since they are established by the same arguments, we skip those details. We start by recalling the structure of our signal space, as introduced in \cite{Daubechies_Lu_Wu:2011, Chen_Cheng_Wu:2014}. We emphasize that this is, to a large extent, a purely phenomenological model, constructed so as to reflect the fairly (but not exactly) periodic nature of many signals of interest, in particular (but not only) those of a physiological origin (see the discussion in \cite{Wu_Hseu_Bien_Kou_Daubechies:2013}). 

A single-component or {\it intrinsic-mode type} (IMT) function has the 
following form:
\[
F(t) = A(t)\cos(2\pi\varphi(t)),
\]
where the amplitude modulation $A(t)$ and the phase function $\varphi(t)$ are both reasonably smooth; in addition, both $A(t)$ and the derivative $\varphi'(t)$ (or the ``instantaneous frequency'') are strictly positive at all time as well as bounded; finally, we assume that $A$ and $\varphi'$ vary in time at rates that are slow compared to the instantaneous frequency of $F$ itself. For the precise mathematical formulation of these conditions we refer to the ESM; this precise formulation invokes a few parameters, one of which, $\epsilon$, bounds the ratio of the rate of change of $A$ and $\varphi'$. This parameter will play a role in our estimates below. [Although we are assuming the signal to be real-valued here, all this can easily be adapted to the complex case by replacing the cosine with the corresponding complex exponential; the discussion in the remainder of this section can be adapted similarly.]
We also consider signals that contain several IMT components, that is,
functions of the type
\begin{equation}
G(t)=\sum_{\ell=1}^L F_\ell(t)
=\sum_{\ell=1}^L A_\ell(t)\cos(2\pi\varphi_\ell(t)),
\end{equation}
where each $F_\ell$ is an IMT function, and we assume in addition that
the instantaneous frequencies $\varphi_\ell'(t)$ are ordered (higher $\ell$
corresponding to larger $\varphi_\ell'$) and well-separated,  
\begin{align}\label{definition:adaptiveHarmonicMultiple}
\varphi_{\ell+1}'(t)-\varphi'_\ell(t)>d(\varphi_{\ell+1}'(t)+\varphi_{\ell}'(t))
\end{align}
for all $\ell=1,\ldots,L-1$, for some $d$ with $0<d<1$. 
We denote by $\mathcal{A}$ the set of all such functions $G$; it provides
a flexible  {\it adaptive harmonic model} space for a wide class of signals
of interest. (Strictly speaking, they are not ``truly'' harmonic,
if harmonicity is interpreted -- as it often is -- 
as ``having components with frequencies that are 
integer multiples of a fundamental frequency''.)

Next, we turn to the noise model for which we prove our main theoretical result. For the purposes of this theoretical discussion, we use a simple additive Gaussian white noise (even though, as illustrated by the figures in the introduction, the approach works for much more challenging noise models as well!). That is, we consider our noisy signals to be of the form 
\begin{equation}\label{decompAdaptive}
Y(t) = G(t)+ \sigma\Phi(t) = \sum_{\ell=1}^L F_\ell(t) + \sigma\Phi(t) = \sum_{\ell=1}^L A_\ell(t)\cos(2\pi\varphi_\ell(t))+\sigma\Phi(t),
\end{equation}
where $G =\sum_{\ell=1}^L F_\ell$ is in $\mathcal{A}$, $\Phi$ is a Gaussian white noise with standard deviation $1$ and $\sigma>0$ is the noise level. Note that typically $Y$ is a generalized random process, since by definition $G$ is a tempered distribution. We could extend this, introducing also the trend and a more general noise model as in \cite{Chen_Cheng_Wu:2014}, the wave-shape function used in \cite{Wu:2013}, or the generalized IMT functions that model oscillatory signals with fast varying instantaneous frequency  of \cite{Kowalski_Meynard_Wu:2015}. None of these generalizations would significantly affect the mathematical analysis, but to simplify the discussion, we restrict ourselves to the model (\ref{decompAdaptive}).

Finally, we describe the wavelets $\psi_1, \ldots,\psi_J$ with respect to which we compute the CWT of $Y$. For the sake of convenience of the theoretical analysis, we assume that they are smooth functions with fast decay, and that their Fourier transforms $\widehat{\psi_j}$ are all real functions with compact support, $\mbox{supp}\widehat{\psi_j}\subset [1-\Delta_j,1+\Delta_j]$, where $0<\Delta_j<1$. We also assume that the $\psi_1,\ldots,\psi_J$ form an orthonormal set, that is, $\int\psi_i(x)\overline{\psi_j(x)}\ud x= \delta_{i,j}$, where $\delta_{i,j}$ is the Kronecker delta. To build appropriate linear combinations of the $\psi_j$, we define, for any unit-norm vector $\boldsymbol{r}=(r_1,\ldots,r_J)$ in $\mathbb{R}^J$, the corresponding combination as $\psi^{[\boldsymbol{r}]}:=\sum_{j=1}^J r_j \psi_j$.
It is convenient to characterize intervals for the scale $a$ such that
the support of $\widehat{\psi_j^{(a,b)}}$ overlaps $\varphi_\ell'(b)$, where $\psi_{j}^{(a,b)}(t):=\frac{1}{\sqrt{a}}\psi_j\left(\frac{t-b}{a}\right)$; we
thus introduce the notation  
$Z^{(j)}_\ell(b)=
\left[(1-\Delta_j)/\varphi_\ell'(b),(1+\Delta_j)/\varphi_\ell'(b)\right]$.
It then follows from the definition of the CWT as the inner product between
the signal and scaled, translated versions of the wavelets that (see   
\cite{Daubechies_Lu_Wu:2011, Chen_Cheng_Wu:2014})
\[
W^{(\psi_j)}_{F_\ell}(a,b)=\left\{
\begin{array}{ll}
e^{i2\pi\varphi_\ell(b)}Q_{j,\ell}(a,b)+
\epsilon_j(a,b)&\mbox{ when }a\in Z^{(j)}_\ell(b)\\
\epsilon_j(a,b)&\mbox{ otherwise},
\end{array}
\right.
\]
where
\begin{equation}\label{Expansion:Qk}
Q_{j,\ell}(a,b)=A_\ell(b)\sqrt{a}\,
\overline{\widehat{\psi_j}(a\varphi'_\ell(b))}\in \RR
\end{equation}
and $\epsilon_j$ is of order $\epsilon$ for all $j=1,\ldots,J$. Here $\epsilon_j$ depends on the first three absolute moments of $\psi_j$ and $\psi'_j$ and the model parameters. It follows that the wavelet transform of $Y$, with respect to $\psi_j$, is given by
\[
W^{(\psi_j)}_Y(a,b)=\sum_{\ell=1}^L
e^{i2\pi\varphi_\ell(b)}Q_{j,\ell}(a,b)\chi_{Z^{(j)}_\ell(b)}+
\epsilon_j(a,b)+\sigma\Phi(\psi_{j}^{(a,b)}),
\]
where $\chi_{Z^{(j)}_\ell(b)}$ is the indicator function of the set $Z^{(j)}_\ell(b)$; note that the $\epsilon_j(a,b)$-term, again of order $\epsilon$, need not be the same as before. As shorthand notations, we will use bold symbols to regroup quantities indexed by $j=1,\ldots,J$ into one $J$-dimensional vector, e.g. $\boldsymbol{\epsilon}(a,b)=[\epsilon_1(a,b),\ldots,\epsilon_J(a,b)]^\intercal$ (which has norm of order $\epsilon$), $\boldsymbol{\Phi}(a,b)=[\Phi(\psi_1^{(a,b)}),\ldots,\Phi(\psi_J^{(a,b)})]^\intercal$ (a complex Gaussian random vector \cite{Gallager:2008}, with mean $[0,\ldots,0]^\intercal\in\RR^J$, and covariance as well as relation matrix equal to $I_{J\times J}$ -- see ESM), $ \boldsymbol{W}^{\boldsymbol{\psi}}_Y(a,b)= [W^{(\psi_1)}_Y(a,b),\ldots,W^{(\psi_J)}_Y(a,b)]^\intercal$, and $\boldsymbol{Q}_\ell(a,b):=[Q_{1,\ell}(a,b),\ldots,Q_{J,\ell}(a,b)]^\intercal$. Finally, $W^{(\psi^{[\boldsymbol{r}]})}_Y(a,b):=\boldsymbol{r}^\intercal \boldsymbol{W}^{\boldsymbol{\psi}}_Y(a,b)$ or, more explicitly,
\begin{equation}
W^{(\psi^{[\boldsymbol{r}]})}_Y(a,b)=\sum_{\ell=1}^L \sum_{j=1}^J r_j
e^{i2\pi\varphi_j(b)} Q_{j,\ell}(a,b)\chi_{Z^{(j)}_\ell}(a,b)+
\boldsymbol{r}^\intercal\left[\boldsymbol{\epsilon}(a,b)+
\sigma\boldsymbol{\Phi}(a,b)\right].
\end{equation}
Under the general assumptions for our model, 
\[
-i\partial_b W^{(\psi^{[\boldsymbol{r}]})}_Y(a,b)
=\, 2\pi \left(\sum_{j=1}^J \sum_{\ell=1}^L 
r_j \varphi_j'(b) e^{i2\pi\varphi_j(b)} Q_{j,\ell}(a,b) \chi_{Z^{(j)}_\ell}(a,b)
+ \boldsymbol{r}^\intercal \left[\widetilde{\boldsymbol{\epsilon}}(a,b)
+ \sigma\widetilde{\boldsymbol{\Phi}}(a,b)\right]\right),
\]
where $\widetilde{\boldsymbol{\epsilon}}(a,b)$ is again a $J$-dimensional vector of order $\epsilon$, and $\widetilde{\boldsymbol{\Phi}}(a,b)$ is again a complex Gaussian random vector. The scalar products $\boldsymbol{r}^\intercal\boldsymbol{\Phi}(a,b)$ and $\boldsymbol{r}^\intercal\widetilde{\boldsymbol{\Phi}}(a,b)$ are independent complex Gaussian random variables, with mean 0 and variance $\|\boldsymbol{r}\|^2$, $ \sum_{j=1}^J r_j^2 \|\widehat{\psi'_j} \|^2/(2\pi a)^2$, respectively. (See ESM.) Set now $Z_\ell(b) = \cap_{j=1}^J Z^{(j)}_\ell(b)$. Then it follows that for $a \in Z_\ell(b)$, we get the following reassignment for the CWT $W^{(\psi^{[\boldsymbol{r}]})}_Y$:
$$
\omega_Y^{(\psi^{[\boldsymbol{r}]})}(a,b)
=\,\frac{-i\partial_b W^{(\psi^{[\boldsymbol{r}]})}_Y(a,b)}
{2 \pi W^{(\psi^{[\boldsymbol{r}]})}_Y(a,b)}
=\,\frac{\boldsymbol{r}^\intercal\left[\varphi_\ell'(b)e^{i2\pi\varphi_\ell(b)}\boldsymbol{Q}_\ell(a,b)
+\widetilde{\boldsymbol{\epsilon}}(a,b)+\sigma\widetilde{\boldsymbol{\Phi}}(a,b)\right]}
{\boldsymbol{r}^\intercal\left[e^{i 2\pi \varphi_\ell(b)}\boldsymbol{Q}_\ell(a,b)
+\boldsymbol{\epsilon}(a,b)+\sigma\boldsymbol{\Phi}(a,b)\right]},
$$
which is a ratio random variable of two dependent complex Gaussian random variables with non-zero means. 
Next, we consider, for each {\em fixed} realization of the random noise, the unit-norm vector $\boldsymbol{r}\in \mathbb{R}^J$ as a random vector, picked uniformly from $S_\kappa:=\left\{\boldsymbol{r}\in S^{J-1}\,;\, \left|\boldsymbol{r}^\intercal \boldsymbol{W}^{(\boldsymbol{\psi})}_Y(a,b)\right|>2\kappa\mbox{ and }\Re\left(\boldsymbol{r}^\intercal \boldsymbol{W}^{(\boldsymbol{\psi})}_Y(a,b)\right)>0\right\}\subset S^{J-1}$. Restricting the choice of $\boldsymbol{r}$ to the subset of $S^{J-1}$ for which the inner product of $\boldsymbol{r}$ and $\boldsymbol{W}^{\boldsymbol{\psi}}_Y(a,b)$ has magnitude larger than $2\kappa$ reflects the threshold used in the SST algorithm (see Section \ref{Section:Conceftalgorithm}); restricting $\boldsymbol{r}$ so that the inner product has positive real part means that we sample $\boldsymbol{r}$ from a half sphere rather than the whole sphere. (See ESM for more details.)
 
Assuming that the bound on the noise is such that $\|\boldsymbol{\epsilon}+\sigma\boldsymbol{\Phi}\|_2^2<\kappa$, then the expectation of $\omega_Y^{(\psi_{\boldsymbol{r}})}(a,b)$ over $S_\kappa$ is given by
\[
\mathbb{E}_{\boldsymbol{r}}\omega_Y^{(\psi_{\boldsymbol{r}})}(a,b)=
\varphi_\ell'(b)
+e^{-i2\pi\varphi_\ell(b)}\mathfrak{p}_{\boldsymbol{Q}_\ell(a,b)}
\left(\boldsymbol{V}_{\!\ell}(a,b)\right)
+E_1,\label{Lemma:ExpectationOverrOfOmega}
\]
where $\boldsymbol{V}_{\!\ell}(a,b):=
\widetilde{\boldsymbol{\epsilon}}(a,b)+\sigma\widetilde{\boldsymbol{\Phi}}(a,b)-\varphi_\ell'(b)[\boldsymbol{\epsilon}(a,b)+\sigma\boldsymbol{\Phi}(a,b)]$,
$\mathfrak{p}_{\boldsymbol{v}} $ denotes ``taking the component along'' a vector
$\boldsymbol{v} $, that is, $\mathfrak{p}_{\boldsymbol{v}}(\boldsymbol{u})
=\frac{\boldsymbol{v}^\intercal \boldsymbol{u}}{\|\boldsymbol{v}\|}$, and $E_1$ is bounded by
\[
|E_1|\leq
\frac{1}{2}\,
\left(\left[ 1-\frac{c}{J-1} \right] 
|\mathfrak{p}_{\boldsymbol{Q}_\ell(a,b)}\left(\boldsymbol{V}_{\!\ell}(a,b)\right)|^2 
+ c\, \frac{\|\boldsymbol{V}_{\!\ell}(a,b)\|^2}{J-1}\right)^{1/2}.
\]
Furthermore the variance is bounded by
\[
\text{Var}_{\boldsymbol{r}}\,\omega_Y^{(\psi^{[\boldsymbol{r}]})}(a,b)
\leq\,\frac{5}{2}\,\left(
\left[ 1-\frac{c}{J-1} \right] 
|\mathfrak{p}_{\boldsymbol{Q}_\ell(a,b)}
\left(\boldsymbol{V}_{\!\ell}(a,b) \right)|^2
+ c\, \frac{\|\boldsymbol{V}_{\!\ell}(a,b)\|^2}{J-1}\right)~. 
\]
A detailed derivation, and an explicit expression for the constant $c$ is given in the ESM; if $J$ becomes large, we have $c \approx 2 \sqrt{2}/[\kappa \sqrt{\pi J}]\,$. The quantity $\left|\mathfrak{p}_{\boldsymbol{Q}_\ell(a,b)}\left(\boldsymbol{V}_{\!\ell}(a,b) \right)\right|=\left|\mathfrak{p}_{\boldsymbol{Q}_\ell(a,b)}\left(\widetilde{\boldsymbol{\epsilon}}(a,b)+\sigma\widetilde{\boldsymbol{\Phi}}(a,b)-\varphi'_\ell(b)[\boldsymbol{\epsilon}(a,b)+\sigma\boldsymbol{\Phi}(a,b)]\right)\right|$, which occurs in several of these estimates, is, with high probability (with respect to the random noise process), fairly small for large $J$, because it is the norm of the projection onto $\boldsymbol{Q}_\ell(a,b)$ of $\boldsymbol{V}_{\!\ell}(a,b)$, and vectors that are unrelated (as is the case for $\boldsymbol{Q}_\ell(a,b)$ and $\boldsymbol{V}_{\!\ell}(a,b)$) have a higher chance of being close to orthogonal in higher dimensions. The other terms in $\mathbb{E}_{\boldsymbol{r}}\omega_Y^{(\psi_{\boldsymbol{r}})}(a,b)-\varphi_\ell'(b)$ and in the variance $\text{Var}_{\boldsymbol{r}}\,\omega_Y^{(\psi^{[\boldsymbol{r}]})}(a,b)$ all have a factor $J-1$ in the denominator. Our theoretical analysis thus proves that $\conceft$, using a larger dimensional space of TF representations, and subsequently averaging over the SST corresponding to random vectors in this larger dimensional space, leads to sharper estimates of the instantaneous frequencies for signals in $\mathcal{A}$ that are corrupted by noise. Even when $J$ is not large, our bounds show that $\conceft$ leads to a reduction in potential deviation of the tvPS from the itvPS.

The detailed estimates given in section ESM-3 are derived under the restrictive conditions listed at the start of this section for the signals and the wavelets used. However, as noted above, these conditions can be relaxed significantly (at the price of more intricate estimates). In practice, we observe similar behavior in our numerical examples even for more complex situations; in particular, the method can handle noise models that are much more challenging, as illustrated in the next section as well as by Figure \ref{fig:Introduction:Example2} in section \ref{Section:Intro}.

\section{Numerical Experiments}\label{Section:Conceftnumerical}

In this Section, we demonstrate the results of the $\conceft$ algorithm on examples; we also discuss different choices for some of the different parameters involved. The $\conceft$ Matlab code and the codes leading to the figures in this paper could be found in \url{https://sites.google.com/site/hautiengwu/home/download}.

The first choice to be made, when applying CWT- or STFT-based $\conceft$, concerns the family of orthonormal reference functions (wavelets or window functions) for the underlying wavelet or windowed Fourier transform. In both cases, we pick a family of eigenfunctions for a time-frequency localized operator designed for the CWT or STFT framework; as shown in \cite{Daubechies:1988,DaubPaul:1988} these can provide ``optimal'' localization within a restricted region of time-frequency space, where the size of the region depends solely on the number of functions used. More precisely, we use orthonormal Hermite functions for the STFT case \cite{Daubechies:1988,Xiao_Flandrin:2007} (see also Figure \ref{fig:Introduction:Example2}), and Morse wavelets for the CWT case \cite{DaubPaul:1988,Olhede_Walden:2002}. In both cases, the shape of the localization domain in TF plane is not completely fixed, but can be adjusted by varying some parameters; for details, see ESM. Once the family of orthonormal reference functions is fixed, we need to decide how many $\psi_j$, $j=1, \ldots,J$ we pick; this corresponds to choosing the size of the corresponding domain of concentration in the TF plane. Flexibility in the choices of shape and size of the TF localization domain make it possible to adapt $\conceft$, to some extent, to the family of signals under consideration. Finally, $\conceft$ also depends on the number $N$ of random projections chosen (see sections \ref{Section:Conceftalgorithm} and \ref{Section:Concefttheory}). In principle, the larger $N$, the closer the results are to the expected value of the random process, and the more we expect accidental correlations between reference function and the noise to cancel out in regions of the TF plane where the signal does not reside; in practice, increasing $N$ beyond a certain value does not appreciably improve the results. In what follows, we explore these different choices for the CWT case, on a simple family of challenging examples, with noise of different types (white Gaussian, Poisson and ARMA), and of different strengths. Results for the STFT case are similar; we will come back to them briefly below in subsection \ref{sec:STFT} as well as (in more detail) in the ESM. 

For our test data, we restrict ourselves to simulated signals only, so as to be able to quantify the deviation from the ``ground truth'', usually not available in real-life applications. ($\conceft$ results on concrete signals will appear elsewhere \cite{Lin_Wu:2015}.) On the other hand, we want to avoid parametric models, so as to be sufficiently general. Accordingly, we generate a class $\mathcal{C}$ of non-stationary data via a random process described below in subsection \ref{sec:data}; each realization provides us not only with a (simulated) clean signal, but also with the exact ``ground truth'' for the time-dependent instantaneous frequency and amplitude of the components of that signal. The same subsection also describes in detail three different noise models (white Gaussian, Poisson and ARMA(1,1)) for which the approach is tested. After applying $\conceft$ to signals in $\mathcal{C}$, we want to compare the $\conceft$ results with the optimal, ground truth TF representation; to quantify their (dis)similarity, we use an Optimal Transport (OT) distance, as described in subsection~\ref{sec:OT}. In subsection \ref{sec:param}, we discuss how choices of the parameters and of the number of orthogonal Morse wavelets impact the $\conceft$ results, for this family of examples; subsection~\ref{sec:N} illustrates the effect of the number $N$ of random projections. Finally, in subsection~\ref{sec:noise}, we explore the effect on CWT-based ConceFT of different noise levels, for each of the three noise types we consider; subsection \ref{sec:STFT} briefly discusses the STFT case. 

\subsection{Data simulation\label{sec:data}}
To generate a typical multi-component signal, we use smoothened Brownian path realizations to model the non-constant amplitudes and the instantaneous frequencies of the components; more precisely, if $W$ is the standard Brownian motion defined on $[0,\infty)$, then we define the {\it smoothened Brownian motion with bandwidth $B>0$} as $\Phi_{B}:=W\star K_{B}$, where $K_{B}$ is the Gaussian function with standard deviation $B>0$ and $\star$ denotes the convolution operator. Given $T>0$ and parameters $\zeta_1, \ldots \zeta_6 >0$, we then define the following family of random processes on $[0,T]$:
\[
\Psi_{[\zeta_1, \ldots \zeta_6]}(t) := \zeta_1 + \zeta_2\,t + 
\zeta_3 \frac{\Phi_{\zeta_4}(t)}{\|\Phi_{\zeta_4}\|_{L^\infty[0,T]}}
+ \zeta_5 \int_0^t \frac{\Phi_{\zeta_6}(s)}{\|\Phi_{\zeta_6}\|_{L^\infty[0,T]}}\ud s~.
\]
For the amplitude $A(t)$ of each IMT, we set $\zeta_2=\zeta_5=0$; every realization then varies smoothly between $\zeta_1$ and $\zeta_1+\zeta_3$. In the examples shown below and in the ESM, the signal consists of two components (i.e. $L=2$) on $[0,60]$; their two amplitudes are independent realizations of $\Psi_{[2,0,1,200,0,0}(t)]$. To simulate a phase function, we set $\zeta_1=\zeta_3=0$; $\Psi_{[0,\zeta_2,0,0,\zeta_5,\zeta_6]}(t)$ is then, appropriately, a monotonically increasing process. In the examples we consider, we take for $\varphi_1(t)$ a realization of $\Psi_{[0,10,0,0,6,400]}(t)$ for $t \in [0,60]$, and for $\varphi_2(t)$ a realization of $\Psi_{[0,2 \pi,0,0,2,300]}(t)$. Finally, we also constrain each component to ``live'' on only part of the interval, by setting
\[
s(t)=A_1(t)\cos(2\pi\varphi_1(t))\chi_{[18,\,60]}(t)+
A_2(t)\cos(2\pi\varphi_2(t))\chi_{[0,\,36]}(t) =: s_1(t) + s_2(t)\,,
\]
where $\chi_{[\tau_1,\tau_2]}$ is the indicator function of $[\tau_1,\tau_2]$; that is, $\chi_{[\tau_1,\tau_2]}(t)= 1$ if $\tau_1 \leq t \leq \tau_2$, $\chi_{[\tau_1,\tau_2]}(t)= 0$ otherwise. We shall denote the resulting class of two-component signals by $\mathcal{C}$. In our examples, signals in $\mathcal{C}$ are sampled uniformly at rate $160$Hz, corresponding to $9600$ samples. Figure~\ref{fig:data} plots $s(t)$ for one example $s \in \mathcal{C}$, as well as the instantaneous frequencies (IFs) of its two components, all restricted to the subinterval $[15,40] \subset [0,60]$.

\begin{figure}[htbp!]
\centering
\includegraphics[width=.95\textwidth]{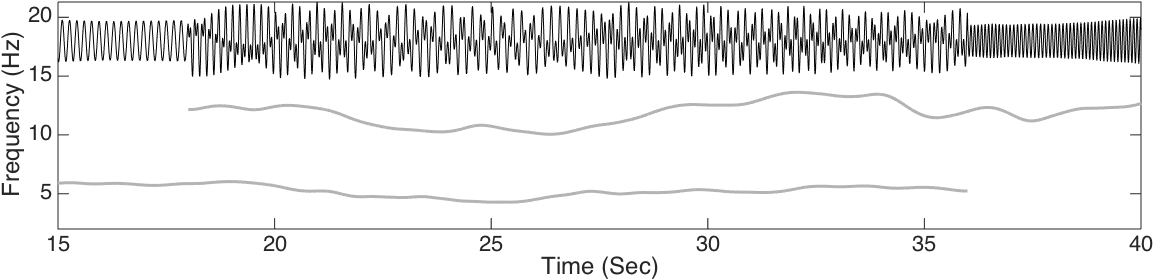}
\caption{The signal $s$ (in black) and the corresponding instantaneous frequencies (in gray) of the two components, restricted to the time interval $[15,40]$. 
\label{fig:data}}
\end{figure}

Note that the signal $s$ should not be viewed as a random process itself -- we use the random processes $\Psi_{[\zeta_1, \ldots \zeta_6]}$ as a means to generate signals consisting of several components for which the amplitudes and instantaneous frequencies are not easily expressed analytically, but we will not consider or compute expectations with respect to these processes -- once $s \in \mathcal{C}$ is generated, we consider it fixed when we apply $\conceft$ to it.  (In further subsections, we shall encounter other elements of $\mathcal{C}$.) 

To study the performance of $\conceft$ in the presence of noise, we add noise to $s(t)$, setting $Y(t_k)=s(t_k)+\sigma\xi(t_k)$, where $t_k$ is the $k$-th sampling time and $\xi$ is a stationary random process. We shall consider three different noise models; in each case we set the value of $\sigma$ so that the signal to noise ratio (SNR),
\[
\text{SNR}:=20\log\frac{\text{std}(s)}{\sigma\text{std}(\xi)}~,
\]
equals 0 dB. The three noise models we consider are Gaussian white noise, an auto-regressive-and-moving-average (ARMA) noise and Poisson noise.  For the ARMA case, we consider an ARMA$(1,1)$ model determined by autoregression polynomial $a(z)=0.5z+1$ and moving averaging polynomial $b(z)=-0.5z+1$; for the innovation process we use independent and identically distributed Student $t_4$ random variables. [Note that this ARMA$(1,1)$ noise is not white, because of the time dependence; in addition, the Student $t_4$ random variable has a ``fat-tailed'' distribution, resulting in possibly spiky realizations.]  For the Poisson noise, we pick the $\xi(t_k)$ to be independent and identically sampled from the Poisson distribution with parameter $\lambda=1$. Figure~\ref{fig:data_noise} plots a realization of $Y(t)=s(t)+\sigma\xi(t)$ for each of these three noise processes, restricted to the subinterval $[15,40]$.

\begin{figure}[htbp!]
\centering
\includegraphics[width=1\textwidth]{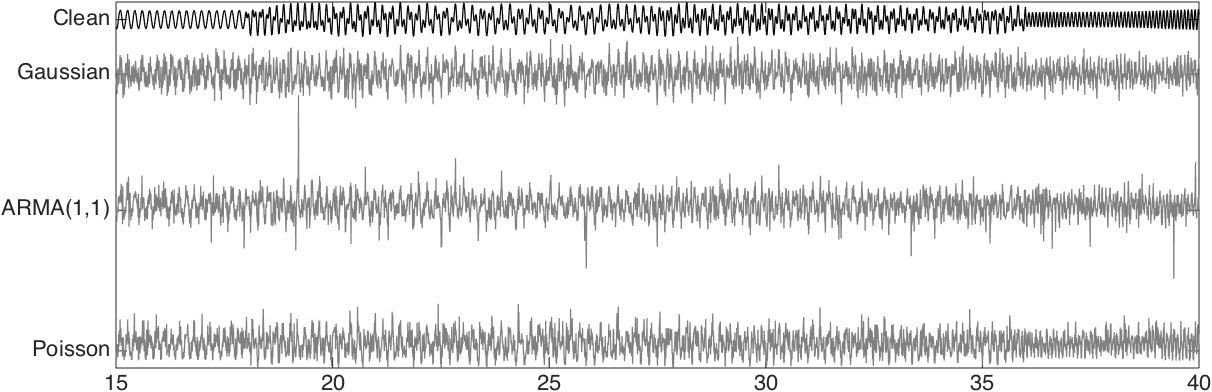}
\caption{The restrictions to $[15,40]$ of the clean signal $s$ (top) and of the noisy signal $Y=s+\sigma \xi$, where the added noise is Gaussian, ARMA(1,1), or Poisson noise (2nd row to bottom, in order); in each case $\sigma$ is picked so that the noisy signal has 0 dB SNR (signal to noise ratio). All signals are plotted at the same scale.\label{fig:data_noise}}
\end{figure}

\subsection{Performance evaluation\label{sec:OT}}

To evaluate the performance of $\conceft$, we propose comparing the time-varying power spectrum or tvPS (defined at the end of Section \ref{Section:Conceftalgorithm}) of the results of the $\conceft$ analysis of $Y$ with the {\it ideal time-varying power spectrum} (itvPS) of our simulated signal $s$, which can easily be defined explicitly (because our construction was designed accordingly) as follows:
 \[
\texttt{P}_{s}(t,\omega):= \sum_{k=1}^2 A_k^2(t) \,\delta_{\varphi'_k(t)}(\omega). \nonumber
 \]
In order to quantify the (dis)similarity between the $\conceft$-estimated tvPS $\widetilde{\texttt{P}}_Y$ and the itvPS $\texttt{P}_s$, we use the {\em Optimal Transport} distance (also called the Earth Mover distance). Because the principle of $\conceft$ is to ``reassign'' content in the TF plane, keeping the time-variable fixed (see Section \ref{Section:Conceftalgorithm}), we also keep $t$ fixed for the OT-distance. That is, we interpret, at each time $t$, $\widetilde{\texttt{P}}_{Y}(t,\omega)$ and $\texttt{P}_{s}(t,\omega)$ as (probability) distributions in $\omega$ and compute the OT-distance between them, which essentially measures how much one distribution needs to be ``deformed'' in order to coincide with the other; this is repeated for all $t$, and the average of the $t$-dependent individual OT-distances then indicates the quality of the estimator $\widetilde{\texttt{P}}_Y$ for $\texttt{P}_s$. 

The precise definition of (the discretized version of) the OT-distance we use is given in the ESM; Figure~\ref{fig:OT_demo} displays 4 examples in which two delta-measures localized on curves in the TF-plane (similar to the itvPS defined above) lie at similar OT-distances of each other -- although in each example the distance indicates a different type of ``distortion''. Together, these examples give an intuitive understanding of the way in which OT distances capture the difference between the TF distributions of interest to us here. 

\begin{figure}[htbp!]
\centering
\includegraphics[width=\textwidth]{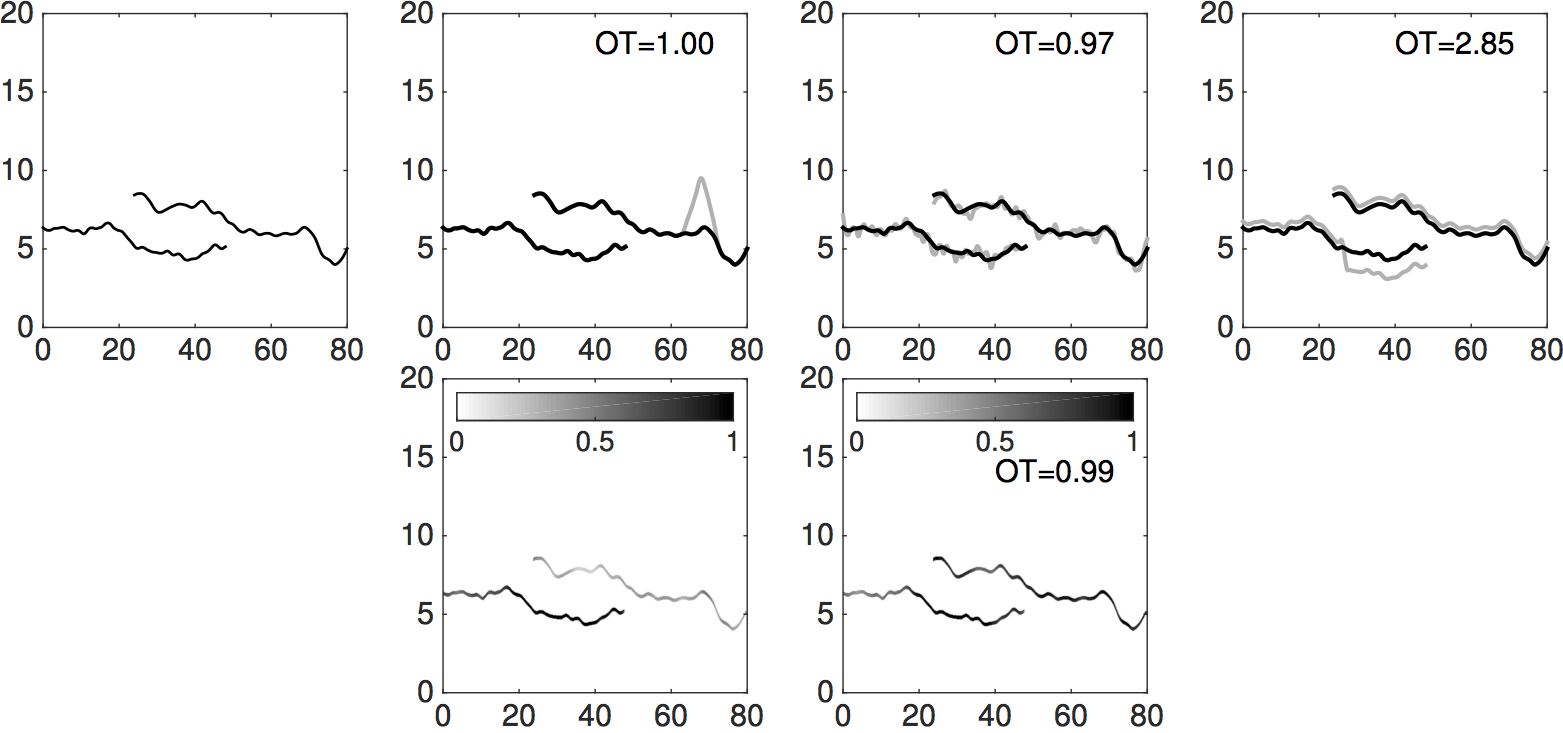}
\caption{Top row: left: the TF-localization of the ideal time varying power spectrum (itvPS) of a 2-component simulated signal $s_{a}$ (not showing the amplitude modulation (AM)). All other itvPS shown in the top row are for signals $s_{b}$, $s_{c}$ and $s_{d}$ that have a fairly small different OT-distance with respect to $s_{a}$; the two components have the same time-dependent amplitudes as for $s_{a}$, but the instantaneous frequency curves have been moved (in order from left to right) by a narrow bump (left), a random dither (middle) and a shift (right). Bottom row: illustration of amplitude change: left: the original itvPS of $s_{a}$ with the AM values indicated by gray scale level; right: an itvPS example with the same IF but different AM. In all the figures, the horizontal axis is time and the vertical axis is frequency. The image for each ``deformed'' itvPS indicates its OT-distance to the original itvPS (shown in the leftmost image on each row).\label{fig:OT_demo}}
\end{figure}

\subsection{Parameter selection\label{sec:param}}

As described in \cite{Olhede_Walden:2002}, generalizing the construction in \cite{DaubPaul:1988}, orthonormal families of Morse wavelets can be defined for different values of two parameters, $\beta$ and $\gamma$; different choices correspond to different shapes of the domain in the TF-plane on which they are mostly localized (see ESM). Once the values of $\beta$ and $\gamma$ are chosen, determining the family of $\psi_j$, one also needs to select $J$, the total number of orthonormal reference wavelets used in the $\conceft$ method. For signals in $\mathcal{C}$ (see \ref{sec:data}), we explored systematically a range of $(\beta,\gamma)$ pairs, as well as different values of $J$, to find the choice that, under different types of noise, with SNR of 0 dB, gave rise to the smallest OT-based distance (as described above) between the itvPS and the $\conceft$-estimated tvPS. Surprisingly, the optimal choice depended very little on the type of noise; the optimal values we found are $\beta=30$, $\gamma=9$ and $J=2$. (Detailed results are given in the ESM.)

\subsection{Effect of the number of random projections\label{sec:N}}

The $\conceft$ Algorithm averages the SST results computed with $N$ randomly picked reference wavelets (or windows, for STFT) from the linear span of the $\psi_j$, $j=1,\ldots,J$. It is expected that the concentration in the TF-plane observed with $\conceft$ kicks in only when $N$ is sufficiently large; on the other hand, the larger $N$, the more expensive the computation. To explore the trade-off, we applied $\conceft$ to the three noisy versions of the signal $s \in \mathcal{C}$ (see subsection \ref{sec:data}), with $N$ ranging from 1 to 200. In all cases, the $\conceft$ algorithm uses the optimal parameters as described in subsection \ref{sec:param}, i.e. it uses the first $2$ Morse wavelets with parameters $\beta=30,\,\gamma=9$. In this simulation, each $\conceft$ computation was repeated 300 times and the mean and standard deviation of the OT distances of the $\conceft$-tvPS to the itvPS were computed. Figure~\ref{fig:N} plots the results. For each of the three noise types, the graph of the average OT-distance shows an ``elbow'' shape, i.e. a regime in which the decrease is faster, as $N$ increases, followed by one in which the decrease is less marked. The elbow is located around $N=20$; the standard deviation is also quite small for this $N$. We accordingly decided to set $N=20$ in our further experiments. 

\begin{figure}[htb]
\centering
\includegraphics[width=\textwidth]{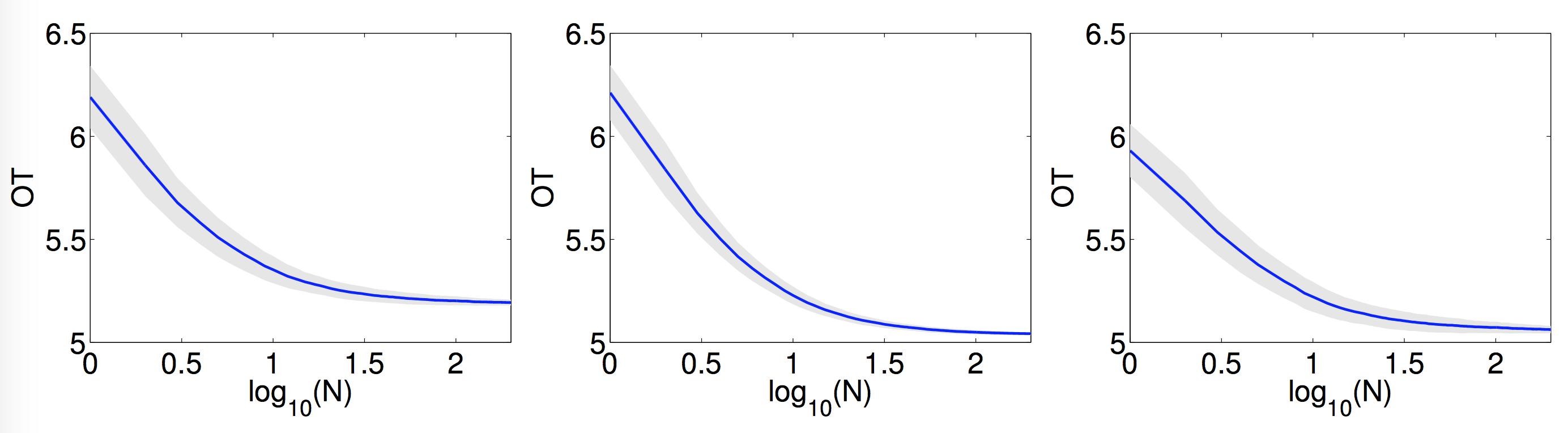}
\caption{The OT distance as a function of the number $N$ of random projections. The shaded band indicates the standard deviation of the OT distance at the corresponding number of random projections. The left column is for the first example and the right column for the second. From left to right, the noise types are Gaussian, ARMA(1,1) and Poisson respectively. For all three experiments, $\beta=30,\,\gamma=9$, and the first two Morse wavelets are used. \label{fig:N}}
\end{figure}

\subsection{$\conceft$ results for noisy signals\label{sec:noise}}
We now show the result of using $\conceft$ with the calibrated parameter choices. We illustrate the performance of $\conceft$ on signals of the simulation class $\mathcal{C}$ (see subsection \ref{sec:data}), for a range of SNR, as well as on deterministic signals. 

\begin{figure}[htbp!]
\centering
\includegraphics[width=\textwidth]{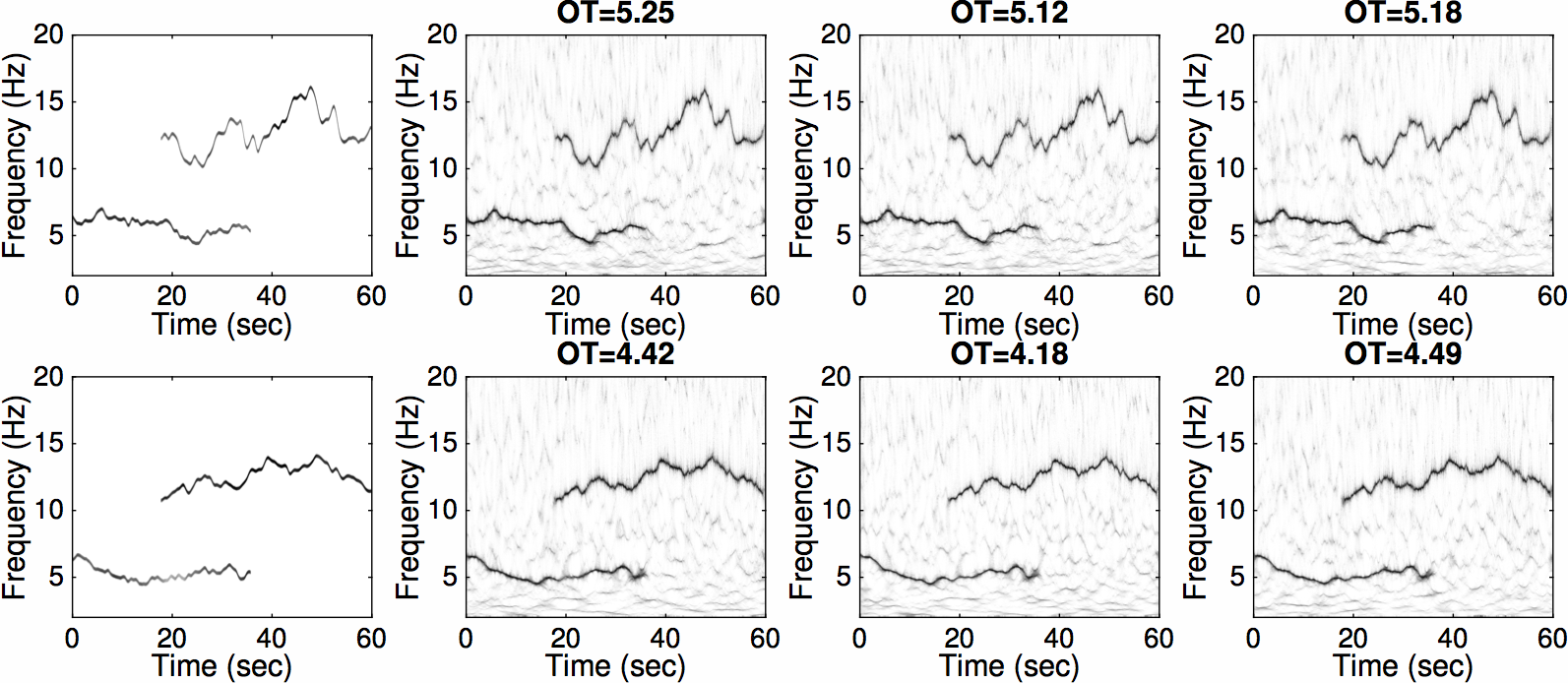}
\caption{First row: results for the signal $s$; second row: results for a new example $s^{\ast}$.  Left to right: ideal time-varying TF power spectrum (itvPS) for the clean signal, followed by results of $\conceft$ with Morse wavelets after (in order) Gaussian, ARMA(1,1) or Poisson noise was added, with SNR of 0 dB. Clearly, even for a signal-to-noise ratio is as low as $0$ dB, the results approximate the truth with high precision. For each of the tvPS panels, the header gives the OT distance to the corresponding itvPS.  \label{fig:nsType}}
\end{figure}

As a warm-up, we start with the signal $s$ seen before. The top row of Figure \ref{fig:nsType} plots the tvPS $\widetilde{\texttt{P}}_Y$ of the three noisy versions of $s$ next to the itvPS $\texttt{P}_s$. To compress the dynamical range of the tvPS plots, we carry out the following procedure. We first normalize the discretized version $\boldsymbol{\widetilde{\mathbb{P}}}_Y\in\RR^{m\times n}$ of $\widetilde{\texttt{P}}_Y$ (where $m$ and $n$ stand for the number of discrete frequencies and the number of time samples, respectively) by multiplying it by a constant so that {\color{black}the total weight of all entries equals the same number for all cases -- i.e., for some $\theta>0$ (to be picked -- see below), $\frac{1}{nm}\sum_{k=1}^m\sum_{l=1}^n \left(\boldsymbol{\widetilde{\mathbb{P}}}_Y\right)_{k,l} = \theta$.} 
We then plot a gray-scale visualization of $\boldsymbol{R}\in\RR^{m\times n}$ rather than the (normalized) $\boldsymbol{\widetilde{\mathbb{P}}}_Y\in\RR^{m\times n}$ itself, where $\boldsymbol{R}_{k,l}:=\log(1+\min\{\boldsymbol{\widetilde{\mathbb{P}}}_{k,l},q\})$, $k=1,\ldots,m$, $l=1,\ldots,n$ and $q$ is a (very high) cut-off to downplay the effect of far-off outliers. We choose $q$ to be the same for all three tvPS, so that comparable gray levels on the different tvPS panels indicate comparable values of $\boldsymbol{R}$ (see Section 4f in the ESM for a more extensive discussion of choosing $q$ and gray-scale plotting of tvPS). 
{\color{black}For the figures, we choose $\theta=5$ and $q=5.718$; this value for $q$ is the
minimum of the $99.8\%$ quantiles of the different tvPSs.}
The second row of Figure \ref{fig:nsType} gives the results for $s^{\ast}$, a signal of the simulation class $\mathcal{C}$ that was not used (in contrast to $s$) to calibrate parameters of $\conceft$. The results are similarly highly accurate.

Next, we study the effect on the $\conceft$ performance of the noise level, as quantified by SNR. To this end, we revisit the analysis of the signal $s^{\ast}$ (and $s$ in the ESM). For each signal, each type of noise (Gaussian, ARMA(1,1) or Poisson) and each SNR considered (SNR= $x$ dB, where $x \in \{-7,-6,\ldots,6,7\}$), we considered 20 independent realizations of the noise process; for each of the resulting noisy signals we carried out the $\conceft$ analysis and computed the OT-distance of the tvPS to the itvPS of the clean signal; we then computed the mean and the standard deviation for each. The results are shown in  Figure~\ref{fig:sigma}. The same figure also compares the $\conceft$ results with those of simple SST (using  either the first Morse wavelet with parameters $\beta=30,\,\gamma=9$ as reference wavelet, or {\em one} random linear combination of the two first Morse wavelets) and of multi-taper SST (denoted as \textsf{orgMT}), using the same $\psi_j$ as $\conceft$. For each of these alternate methods, we likewise computed the mean OT-distance of the tvPS to the itvPS for 20 noise realizations. It is striking that the $\conceft$ method outperforms the other methods in all cases. 

\begin{figure}[htbp!]
\centering
\includegraphics[width=\textwidth]{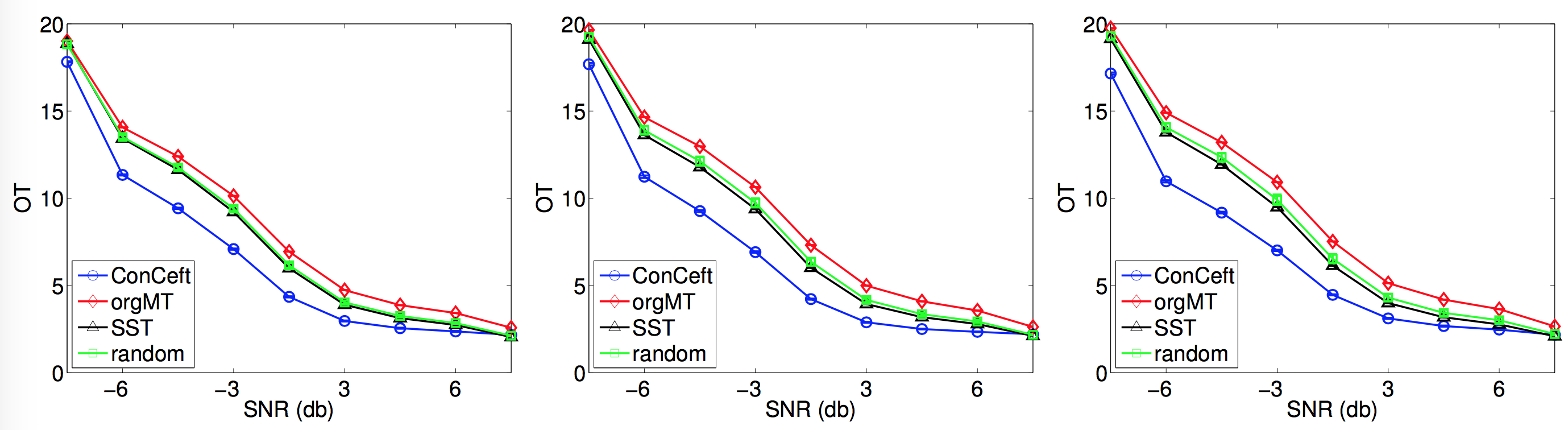}
\caption{OT distance of $\conceft$ tvPS results against signal to noise ratio (SNR) of the signal  $s^{\ast}(t)$, and comparison with standard SST and standard multi-taper SST (see text). Noise type (left to right): Gaussian, ARMA(1,1), and Poisson. The standard deviation is smaller, at the scale of this figure, than the height of the markers, and has not been plotted.\label{fig:sigma}}
\end{figure}

Finally, to address possible concerns that the randomness in the generation and plots of $\varphi'(t)$ and $A(t)$ somehow ``help'' $\conceft$ in these estimations, we show in Figure~\ref{fig:other_signal} the results for yet another signal, which (in contrast to $s$ and $s^{\ast}$) is completely deterministic; it consists of 3 components, each given by an explicit, analytic formula (again for $t \in [0,60]$):
\begin{align*}
s^{\circ}(t)=& \chi_{[10,48]}(t)\,\left(1+0.3\cos(\pi(t-10)/20)^2\right)\,\cos \left(\pi/3+5t+t^2/50\right)\\ &~~ + \left(0.4+0.9\sin(\pi t/60)^2 \right)\, \cos \left( 12t + \sin( \pi t /6 )\right) + 1.2\chi_{[15,60]}(t)\, \cos \left( 17 t + (t-35)^3/800  \right)~. 
\end{align*}
Figure~\ref{fig:other_signal} shows that the results are of a quality similar to those in Figure \ref{fig:nsType}.

\begin{figure}[htbp!]
\includegraphics[width=\textwidth]{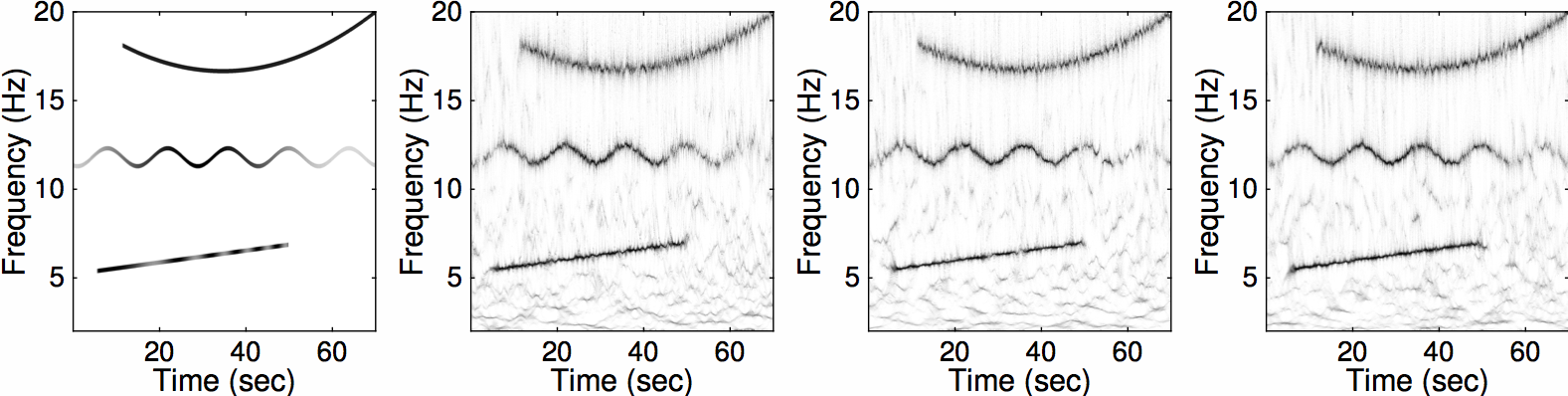}
\caption{Results for the three-component deterministic signal $s^{\circ}$. Left: ideal time-varying TF power spectrum (itvPS) for the clean signal, followed by results of $\conceft$ with Morse wavelets after (in order) Gaussian, ARMA(1,1) or Poisson noise was added, with SNR of 0 dB.    \label{fig:other_signal}}
\end{figure}

\subsection{$\conceft$ with STFT\label{sec:STFT}}

As described earlier, the $\conceft$ approach can be carried out for STFT-based SST as well as for CWT-based SST. Figure \ref{fig:Introduction:Example2} in section \ref{Section:Intro} already showed the results of STFT-$\conceft$ on one example. Other examples are shown in the ESM, together with values of the OT-distance of the STFT-$\conceft$ estimated tvPS to the itvPS. In these experiments, as in Figure \ref{fig:Introduction:Example2}, the reference windows are chosen to be Hermite functions; 20 random projections are used to compute the $\conceft$ averages. Although STFT-based $\conceft$ achieves better OT-distance with respect to the ground truth than STFT-based multi-taper SST, and also achieves a better reduction of ``background noise'' (i.e. the structures in zones away from itvPS concentration, due to fortuitous correlations between the noise and the overcomplete frame of TF reference functions; see the description in section \ref{Section:Intro}), the performance of STFT-based $\conceft$ is not quite as impressive, on the class $\mathcal{C}$, as CWT-based $\conceft$. We provide some discussion in the ESM.

\section{Conclusion}\label{Section:Discussion}
We consider signals that are the linear combination of a small number of
``intrinsic-mode functions'', each of which can be reasonably viewed as
an oscillatory function with well-defined but time-varying amplitude and
``instantaneous frequency''. We have introduced a new approach, called
ConceFT, to determine
the time-frequency representation of such signals, combining
multi-taper estimation
ideas and averaging over random projections with synchrosqueezing. Theoretical analysis shows
that this leads to improved estimation of the time-varying characteristics
of the signals of interest; numerical results confirm the theoretical
promise, even when the signals are corrupted by significant
and challenging noise.

We also introduced two tools to evaluate the effectiveness of this method (or other
similar methods), which may be of interest in their own right to others working in the TF field.
On the one hand, we
introduced a class of explicit, easy to construct signals with explicit
time-varying characteristics,
even though the signals themselves are not given by explicit formulas;
the explicit time-varying amplitude and instantaneous frequency give a
``ground truth'' with which estimations can be compared.
On the other hand, we introduced a distance between time-frequency
representations that can be useful in comparing results
obtained by different methods, by computing for each the distance to the
``ground truth'' time-frequency representation.

\bibliographystyle{plain}
\bibliography{TFanalysis}

\newpage

\centerline{\bf {\Large Electronic Supplementary Materials for }}
\centerline{\bf {\Large``ConceFT: Concentration of frequency and time}}
\centerline{\bf {\Large  via a multi-tapered synchrosqueezing transform''}}

\setcounter{equation}{0}
\setcounter{figure}{0}
\setcounter{table}{0}
\renewcommand{\thefigure}{S.\arabic{figure}}
\renewcommand{\thetable}{S.\arabic{table}}
\renewcommand{\theequation}{S.\arabic{equation}}
\renewcommand{\thesection}{S.\arabic{section}}
\renewcommand{\thetheorem}{S.\arabic{theorem}}
\renewcommand{\thelemma}{S.\arabic{lemma}}

\section*{ESM-1. Introduction}
These are the {\em Electronic Supplementary Materials} for the paper {\sc ConceFT: Concentration of frequency and time via a multi-tapered synchrosqueezing transform}. They contain, in particular, precise mathematical definitions, theorem statements and proofs that complement the more general exposition in the main body of the paper, as well as details about the numerical examples and additional examples. For the convenience of the reader, the organization into sections follows that of the paper; for instance, material in Section ESM-3 complements Section 3 in the main paper.

\section*{ESM-2. The ConceFT Algorithm: Several Remarks}
(A) As described in Section 2 in the main paper, the SST-steps in ConceFT involve the computation of a partial derivative, $\partial_b W_f^{(\psi)}(a,b)$ with respect to the localization parameter $b$ of $W_f^{(\psi)}(a,b)$. In practice, one has $W_f^{(\psi)}(a,b)$ only for discrete (as opposed to continuous) values of $a$ and $b$, and partial differentiation would be approximated by a differentiating scheme. This can cause stability issues when $f$ is noisy. Using the definition of $W_f^{(\psi)}(a,b)$ as the inner product of $f$ with $|a|^{-1/2}\psi\left(\frac{\cdot-b}{a}\right)$, one can compute $\partial_bW_f(a,b)$ via the wavelet transform of $f$ with respect to the wavelet $\psi'$ using $\partial_bW_f^{(\psi)}(a,b)=-W_f^{(\psi')}(a,b)$, typically this makes the computation more stable than simple numerical differencing. 

(B) The ConceFT algorithm consists in taking the average of many nonlinear SST estimates of the tvPS,
each of which results from a wavelet transform with respect to a randomly picked reference wavelet;
for each individual transform the corresponding reassignment is computed and carried out to find
that individual SST. An alternative to
the individual SSTs would be to define one ``master'' reassignment rule, as follows. 
From the collection of $W_f^{(\psi_j)}$, $j=1,\ldots,J$, 
we could estimate
$\Omega^{(\Gamma)}(a,b)$ as the value of $\xi$ for which the vector 
\[
\boldsymbol{w}(a,b)=[W^{(1)}_f(a,b), \partial_bW^{(1)}_f,\ldots,W^{(J)}_f(a,b), \partial_bW^{(J)}_f(a,b)]\in \CC^{2J}
\]
is most ``aligned'' with the vector $u(a,b,\xi)$ 
\[
\boldsymbol{u}(a,b,\xi)=[W^{(1)}_{e^{i2\pi\xi t}}(a,b), \partial_bW^{(1)}_{e^{i2\pi\xi t}},\ldots,W^{(J)}_{e^{i2\pi\xi t}}(a,b), \partial_bW^{(J)}_{e^{i2\pi\xi t}}(a,b)]\in \CC^{2J}.
\]
In other words, the reassignment rule would become
\[
\Omega(a,b):=\argmax_{\xi}\frac{|\langle \boldsymbol{w}(a,b),\boldsymbol{u}(a,b,\xi)\rangle|}{\|\boldsymbol{w}(a,b)\|\|\boldsymbol{u}(a,b,\xi)\|}\,.
\]
Although numerical experiments have shown this to be an interesting approach as well, we shall not
pursue this in this paper.

(C) In most of our examples and figures, we concentrate on visualizing the location in the TF plane of the curves characterizing the different IMT components of the signals considered. However, we can also use the tvPS constructed by $\conceft$ to estimate the different amplitudes, as follows. Each $\conceft$ tvPS is the average of many SSTs constructed in such a way that the integral (sum, in practice) over $\xi$, on an interval around $\phi_l'(t)$, approximates $A_l(t)\cos(2\pi\phi_l(t))$ (see \cite{Daubechies_Lu_Wu:2011,Chen_Cheng_Wu:2014}). It follows that one can use the $\conceft$ representation to first identify the $\phi_l'(t)$ for all $t$ (which can be done more stably with $\conceft$, for large noise, than with simple SST of MTSST), and then integrate $\tilde{S}_Y(t,\xi)$ with respect to $\xi$ in an appropriate interval around $\phi'_l(t)$, to recover $A_l(t)$.

\section*{ESM-3. Theoretical Results: Mathematical statements and proofs}\label{ESM-3}

The following is the mathematically precise definition of an {\it intrinsic-mode type} (IMT) function:
\begin{defn} 
Given $\epsilon$, $c_1$ and $c_2$ 
satisfying  $0<\epsilon\ll 1$, $0<c_1\leq c_2<\infty$,
a function $F(t)$ is said to be of type 
$\mathcal{A}^{c_1,c_2}_{\epsilon}$ if it can be written as
\[
F(t) = A(t)\cos(2\pi\varphi(t)),
\]
where
\begin{align}\label{definition:adaptiveHarmonicSingle}
\left\{\begin{array}{l}\vspace{.2cm}
A\in C^1(\RR)\cap L^\infty(\RR),\quad\varphi\in C^2(\RR),\\ \vspace{.2cm}
\inf_{t\in\RR} A(t)>c_1,\quad \inf_{t\in\RR}\varphi'(t)>c_1,\\ \vspace{.2cm}
\sup_{t\in\RR} A(t)\leq c_2,\quad\sup_{t\in\RR}\varphi'(t)\leq c_2,\\ \vspace{.2cm}
|A'(t)|\leq \epsilon \varphi'_\ell(t),\quad |\varphi''(t)|\leq \epsilon\varphi'(t)\quad\mbox{ for all }t\in\RR,
\end{array}\right.
\end{align}
\end{defn}

To model the oscillatory functions with different oscillatory modes, we also consider superpositions of IMT functions:

\begin{defn} 
Given $\epsilon$, $c_1$, $c_2$ and $d$ satisfying  $0<\epsilon\ll 1$, $0<c_1\leq c_2<\infty$, $0<d<1$, a function $G(t)$ is said to be of type $\mathcal{A}^{c_1,c_2}_{\epsilon,d}$ if it can be written as
\begin{equation}\label{decomp1}
G(t)=\sum_{\ell=1}^L F_\ell(t) 
\sum_{\ell=1}^L A_\ell(t)\cos(2\pi\varphi_\ell(t)),
\end{equation}
where each  
$F_\ell=A_\ell(\cdot)\cos(2\pi\varphi_\ell(\cdot))$ is of type
$\mathcal{A}^{c_1,c_2}_{\epsilon}$ and
\begin{align}\label{definition:adaptiveHarmonicMultiple}
\varphi_{\ell+1}'(t)-\varphi'_\ell(t)>d(\varphi_{\ell+1}'(t)+\varphi_{\ell}'(t))
\end{align}
for all $\ell=1,\ldots,L-1$. 
\end{defn}

Finally, we also consider the additive white Gaussian noise. Denote $\mathcal{S}$ to be the Schwartz function space. Our model for the observed signal $Y(t)$ is thus
\begin{align}\label{decompAdaptive}
Y(t) = \sum_{\ell=1}^L F_\ell(t) + \sigma\Phi(t) = \sum_{\ell=1}^L A_\ell(t)\cos(2\pi\varphi_\ell(t))+\sigma\Phi(t),
\end{align}
where $G=\sum_{\ell=1}^L F_\ell$ is of type $\mathcal{A}^{c_1,c_2}_{\epsilon,d}$, $\Phi$ is a Gaussian white noise so that the standard deviation of $\Psi(\psi)$ is $1$ for all $\psi\in\mathcal{S}$ with norm $1$, and $\sigma>0$ is the noise level; $Y$ is a generalized random process, since by definition $\sum_{\ell=1}^K A_\ell(t)\cos(2\pi\varphi_\ell(t))$ is a tempered distribution. 

The $J$ reference wavelets $\psi_1,\ldots,\psi_J$ are orthonormal, that is, $\int\psi_i(x)\overline{\psi_j(x)}\ud x= \delta_{i,j}$, where $\delta_{i,j}$ is the Kronecker delta. For simplicity we assume that the $\psi_j$ all have fast decay, that their Fourier transforms $\widehat{\psi_j}$ are real functions with compact support, and $\mbox{supp}\widehat{\psi_j}\subset [1-\Delta_j,1+\Delta_j]$, where $0<\Delta_j<1$. 

We shall consider the continuous wavelet transforms of $Y$ with respect to the $\psi_j$, and apply synchrosqueezing to them. For the $F_\ell$-components of $Y$ we refer the reader to the detailed analysis in \cite{Daubechies_Lu_Wu:2011, Chen_Cheng_Wu:2014}. In particular, we introduce the sets $Z^{(j)}_\ell(b)=\left[\frac{1-\Delta_j}{\varphi_\ell'(b)}, \frac{1+\Delta_j}{\varphi_\ell'(b)}\right]$. If each $\Delta_j$ satisfies $\Delta_j\leq \frac{d}{1+d}$ for $j=1,\ldots,J$ (which we shall assume for the remainder of this discussion), then one finds that, by the conditions on $G$, the sets $Z^{(j)}_\ell(b)$ are disjoint. Moreover, the CWT $W^{(\psi_j)}_G(a,b)$ is small except for those pairs $(a,b)$ where $a \in Z^{(j)}_\ell(b)$ for some $\ell$, and in that case $\frac{-i\partial_b W^{(\psi_j)}_{G}(a,b)}{2\pi W^{(\psi_j)}_{G}(a,b)}$ is close to $\varphi_\ell'(b)$. (See Theorem 3.3 in \cite{Daubechies_Lu_Wu:2011}.)

It will be convenient to use $\underline{\Delta}:=\min_{j=1}^J(\Delta_j)$, $\overline{\Delta}:=\max_{j=1}^J(\Delta_j)$, and $Z_\ell(b)=\left[\frac{1-\underline{\Delta}}{\varphi_\ell'(b)}, \frac{1+\underline{\Delta}}{\varphi_\ell'(b)}\right]$. Clearly $Z_\ell(b)=\cap_{j=1}^J Z^{(j)}_{\ell}(b)$.

As shown by the analysis in 
\cite{Daubechies_Lu_Wu:2011, Chen_Cheng_Wu:2014}, we have
\[
W^{(\psi_j)}_G(a,b)=\left\{
\begin{array}{ll}
e^{i2\pi\varphi_\ell(b)}Q_{j,\ell}(a,b)+\epsilon_j(a,b)&\mbox{ when }a\in Z^{(j)}_\ell(b) \,\mbox{ for some }\, \ell=1,\ldots,L\\
\epsilon_j(a,b)&\mbox{ otherwise},
\end{array}
\right.
\]
where
\begin{equation}\label{Expansion:Qk}
Q_{j,\ell}(a,b)=A_\ell(b)\sqrt{a}\widehat{\psi_j}(a\varphi'_\ell(b))\in \RR
\end{equation}
and $\epsilon_j(a,b)$ is of order 
$\oeps=\epsilon^{1/3}$ for all $j=1,\ldots,J$. 

Adding also the noise, we have thus
\[
W^{(\psi_j)}_Y(a,b)=\sum_{\ell=1}^L e^{i2\pi\varphi_\ell(b)}Q_{j,\ell}(a,b)
\chi_{Z^{(j)}_\ell}(b)+\epsilon_j(a,b)+\sigma\Phi(\psi_{j}^{(a,b)}),
\]
where 
$\psi_{j}^{(a,b)}(t):=\frac{1}{\sqrt{a}}\psi_j\left(\frac{t-b}{a}\right)$ 
and $\chi_{Z^{(j)}_\ell(b)}$ is the indicator function of the set
$Z^{(j)}_\ell(b)$.

To simplify further notation, we shall use boldface for $J$-dimensional ``vector'' quantities; for instance we denote $\boldsymbol{\psi}:=[\psi_1,\psi_2,\ldots, \psi_J]^\intercal\in\oplus^J \mathcal{S}$ and $\psi^{[\boldsymbol{r}]}:=\boldsymbol{r}^\intercal\boldsymbol{\psi}$, where $\boldsymbol{r}\in S^{J-1}=\{\boldsymbol{v}\in\RR^J\,;\, \|\boldsymbol{v}\|^2 = \sum_{j=1}^J v_j^2 = 1\}$. Clearly $\psi^{[\boldsymbol{r}]}$ is also a Schwartz function, with $\mbox{supp}\left(\widehat{\psi^{[\boldsymbol{r}]}}\right)\subset [1-\overline{\Delta},1+\overline{\Delta}]$. We similarly introduce $\boldsymbol{\epsilon}(a,b):= [\epsilon_1(a,b),\ldots, \epsilon_J(a,b)]^\intercal$ (a vector with norm of order $\oeps$), $\boldsymbol{Q_\ell}(a,b):=[Q_{1,\ell}(a,b),\ldots,Q_{j,\ell}(a,b)]^\intercal$. Note that all the entries of the vectors $\boldsymbol{Q}_\ell(a,b)$ are real; this will be important for our estimates below. $\boldsymbol{\Phi}(a,b):=[\Phi(\psi_{1}^{(a,b)},\ldots,\Phi(\psi_{J}^{(a,b)})]^\intercal$. $\boldsymbol{\Phi}(a,b)$ is a complex Gaussian random vector \cite{Gallager:2008}, of which the following Lemma gives some basic properties: 

\begin{lemma}\label{Lemma:Phi}
For all $a>0$ and $b\in\RR$, $\boldsymbol{\Phi}(a,b)$ is a complex Gaussian 
random vector with mean $[0,\ldots,0]^\intercal\in\RR^J$, 
for which the covariance matrix and the relation matrix 
both equal $I_{J\times J}$. 
Thus, for all $\boldsymbol{v}\in \RR^{J}$, 
$\boldsymbol{v}^\intercal\boldsymbol{\Phi}(a,b)$ is a complex Gaussian random variable with mean $0$ and variance $\|\boldsymbol{v}\|^2$.
\end{lemma}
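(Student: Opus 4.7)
The plan is to reduce every claim to the defining property of real Gaussian white noise, $\EE[\Phi(f)\Phi(g)] = \langle f,g\rangle_{L^2(\RR)}$ for real $f,g\in\mathcal{S}$, after first extending $\Phi$ to complex-valued test functions by $\RR$-linearity. Writing $\psi_j^{(a,b)} = u_j + i v_j$ with $u_j,v_j\in L^2(\RR;\RR)$, I set $\Phi(\psi_j^{(a,b)}):=\Phi(u_j) + i\,\Phi(v_j)$. The $2J$-dimensional real vector $(\Phi(u_1),\Phi(v_1),\ldots,\Phi(u_J),\Phi(v_J))$ is then a mean-zero jointly Gaussian vector by the definition of white noise, which immediately realizes $\boldsymbol{\Phi}(a,b)$ as a mean-zero complex Gaussian random vector in the sense of \cite{Gallager:2008}.

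For the covariance matrix I would expand entry by entry,
\[
\EE\!\left[\Phi(\psi_j^{(a,b)})\,\overline{\Phi(\psi_k^{(a,b)})}\right] = \langle u_j,u_k\rangle + \langle v_j,v_k\rangle + i\bigl(\langle v_j,u_k\rangle-\langle u_j,v_k\rangle\bigr) = \bigl\langle \psi_j^{(a,b)},\psi_k^{(a,b)}\bigr\rangle_{L^2(\RR)},
\]
and invoke the scale-translation invariance of the $L^2$-inner product: the change of variables $s=(t-b)/a$ together with the normalizing factor $a^{-1/2}$ cancels cleanly, reducing the right side to $\langle \psi_j,\psi_k\rangle = \delta_{j,k}$ by the assumed orthonormality of $\{\psi_j\}$.

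The main analytical step is the relation matrix. The analogous computation without conjugate gives
\[
\EE\!\left[\Phi(\psi_j^{(a,b)})\,\Phi(\psi_k^{(a,b)})\right] = \int \psi_j^{(a,b)}(t)\,\psi_k^{(a,b)}(t)\,dt.
\]
To evaluate this I would pass to the Fourier side via the identity $\int fg\,dt = \int \widehat{f}(\xi)\,\widehat{g}(-\xi)\,d\xi$, together with the scaling relation $\widehat{\psi_j^{(a,b)}}(\xi) = \sqrt{a}\,e^{-i 2\pi b\xi}\,\widehat{\psi_j}(a\xi)$; the hypotheses that each $\widehat{\psi_j}$ is real-valued with support in $[1-\Delta_j,1+\Delta_j]\subset\RR_{+}$, combined with orthonormality of $\{\psi_j\}$, are what must be used to produce the claimed structure. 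This is the delicate part of the argument: the bilinear pairing $\int \psi_j\,\psi_k\,dt$ is \emph{not} the standard $L^2$ inner product, so the positive-frequency support condition and the reality of $\widehat{\psi_j}$ must be invoked carefully in combination with the orthonormality of $\{\psi_j\}$.

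The last assertion then follows almost for free: for $\boldsymbol{v}\in\RR^J$, $\boldsymbol{v}^\intercal\boldsymbol{\Phi}(a,b) = \Phi\bigl(\sum_{j=1}^{J} v_j\,\psi_j^{(a,b)}\bigr)$, which is the value of $\Phi$ on a single complex-valued Schwartz function; Gaussianity and zero mean are inherited from the joint Gaussianity of the first step, and
\[
\EE\!\left[\bigl|\boldsymbol{v}^\intercal\boldsymbol{\Phi}(a,b)\bigr|^2\right] = \sum_{j,k=1}^{J} v_j v_k\,\EE\!\left[\Phi(\psi_j^{(a,b)})\,\overline{\Phi(\psi_k^{(a,b)})}\right] = \sum_{j=1}^{J} v_j^2 = \|\boldsymbol{v}\|^2
\]
reads off from the covariance identity already established.
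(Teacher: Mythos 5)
Your treatment of the mean, the covariance matrix, and the final variance claim is correct and is essentially the paper's argument (the paper computes the variance on the Fourier side via Plancherel and notes the off-diagonal case is "a similar calculation"; your time-domain reduction to $\langle\psi_j,\psi_k\rangle=\delta_{j,k}$ is equivalent). The problem is the relation matrix, which is precisely the step you defer as "the delicate part" without executing. If you carry out the computation you set up, it does not "produce the claimed structure": with $\widehat{\psi_j^{(a,b)}}(\xi)=\sqrt{a}\,e^{-i2\pi b\xi}\widehat{\psi_j}(a\xi)$ and $\mbox{supp}\,\widehat{\psi_j}\subset[1-\Delta_j,1+\Delta_j]\subset\RR_+$, the integrand $\widehat{\psi_j^{(a,b)}}(\xi)\,\widehat{\psi_k^{(a,b)}}(-\xi)=a\,\widehat{\psi_j}(a\xi)\widehat{\psi_k}(-a\xi)$ vanishes identically, because the first factor is supported on $\xi>0$ and the second on $\xi<0$. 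Hence every entry of $\mathbb{E}\!\left[\boldsymbol{\Phi}(a,b)\boldsymbol{\Phi}(a,b)^\intercal\right]$ is $0$, including the diagonal: the relation matrix is the zero matrix (i.e.\ $\boldsymbol{\Phi}(a,b)$ is circularly symmetric), not $I_{J\times J}$ as the lemma asserts. So the gap in your proposal is not merely an omitted calculation; the calculation you would need to supply yields a conclusion incompatible with the statement you are trying to prove.

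For comparison, the paper disposes of this point in one line: "since $\widehat{\psi_j}$ is real, the relation matrix equals the covariance matrix." That inference would require $\widehat{\psi_k^{(a,b)}}(-\xi)=\overline{\widehat{\psi_k^{(a,b)}}(\xi)}$, i.e.\ that $\psi_k^{(a,b)}$ itself be real-valued, which is incompatible with the positive-frequency support assumption; your more careful Fourier-side setup exposes this. Two remarks to keep the damage contained. First, nothing downstream depends on the relation matrix: the last sentence of the lemma, and all subsequent uses in the Proposition (where the noise realization is held fixed and the randomness is over $\boldsymbol{r}$), require only the mean and the covariance matrix, both of which you establish correctly. Second, if you want your write-up to be self-consistent, you should either prove the relation matrix is $0$ (the short disjoint-support argument above) and flag the discrepancy with the stated lemma, or restrict yourself to the covariance statement and the final variance claim; as written, promising to derive "the claimed structure" from hypotheses that in fact force the opposite conclusion is a step that would fail.
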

\begin{proof}
Fix $a>0$ and $b\in\RR$. Since $\Phi$ is a Gaussian white noise and $\psi$ is a complex Schwartz function, it follows that for $j=1,\ldots,J$, $\Phi(\psi_{j}^{(a,b)})$ is a complex Gaussian random variable \cite{Gelfand:1964}. By definition, its mean is $0$ and its variance is
\begin{align}
\text{Var}(\Phi(\psi_{j}^{(a,b)}))&\,=\mathbb{E}|\Phi(\psi_{j}^{(a,b)})|^2 =\int \left|\widehat{\psi_{j}^{(a,0)}}(\xi)\right|^2\ud \xi =\|\widehat{\psi_j}\|_{L^2(\RR)}^2=1.\label{proof:thm:relationship_varX:1}
\end{align}
It is clear that the variance of $\Phi(\psi_{j}^{(a,b)})$ is independent of the scale $a$. Since $\psi_i$ and $\psi_j$ are orthogonal if $i \neq j$, a similar calculation shows that $\text{Cov}(\Phi(\psi_{i}^{(a,b)}),\Phi(\psi_{j}^{(a,b)}))=\delta_{i,j}$. Since we assume that $\widehat{\psi_j}$ is real for all $j=1,\ldots,J$, the relation matrix of $\boldsymbol{\Phi}(a,b)$ equals the covariance matrix, and is thus $I_{J\times J}$ as well. It then easily follows that $\boldsymbol{v}^\intercal\boldsymbol{\Phi}(a,b)$ is a complex Gaussian random variable with mean $0$ and variance $\|\boldsymbol{v}\|^2$.
\end{proof}

Because the CWT is (anti)linear in the wavelet with respect to which it is computed,
the CWT of $Y$ with respect to $\psi^{[\boldsymbol{r}]}$ is given by 
\begin{equation}
W^{(\psi^{[\boldsymbol{r}]})}_Y(a,b)=
\sum_{\ell=1}^L \sum_{j=1}^J r_j
e^{i 2\pi \varphi_j(b)} Q_{j,\ell}(a,b) \chi_{Z^{(j)}_e\\}(a,b)
+\boldsymbol{r}^\intercal\left[\boldsymbol{\epsilon}(a,b)
+\sigma\boldsymbol{\Phi}(a,b)\right].
\end{equation}

The analysis in \cite{Daubechies_Lu_Wu:2011, Chen_Cheng_Wu:2014} also shows that
\[
\partial_bW^{(\psi_j)}_G(a,b)=\left\{
\begin{array}{ll}
i 2\pi \left(\varphi_\ell'(b)e^{i 2\pi \varphi_\ell(b)} Q_{j,\ell}(a,b)
+\widetilde{\epsilon}_j(a,b)\right)&\mbox{ when }a\in Z^{(j)}_\ell(b) \,\mbox{ for some }\, \ell\\
i 2\pi \widetilde{\epsilon}_j(a,b)&\mbox{ otherwise},
\end{array}
\right.
\]
where $\widetilde{\epsilon}_j(a,b)$ is of order $\oeps$ for all $j=1,\ldots,J$.
We thus obtain
%\begin{align*}
\[
-i\partial_bW^{(\psi^{[\boldsymbol{r}]})}_Y(a,b)
=\, 2\pi \sum_{\ell=1}^L \sum_{j=1}^R
r_j \varphi_j'(b) e^{i 2\pi \varphi_j(b)} Q_{j,\ell}(a,b) \chi_{Z^{(j)}_\ell}(a,b)
 2\pi \boldsymbol{r}^\intercal\left[\widetilde{\boldsymbol{\epsilon}}(a,b)
+\sigma\widetilde{\boldsymbol{\Phi}}(a,b)\right],
\]
%\end{align*}
where $\widetilde{\boldsymbol{\epsilon}}(a,b)=
[\widetilde{\epsilon}_1(a,b),\ldots,\widetilde{\epsilon}_J(a,b)]^\intercal$, and
$\widetilde{\boldsymbol{\Phi}}(a,b)=(2 \pi)^{-1}[\Phi(i(\psi_1^{(a,b)})'),\ldots,
\Phi(i(\psi_J^{(a,b)})')]^\intercal$. 
Here $\widetilde{\boldsymbol{\epsilon}}(a,b)$ is again a $J$-dim random vector 
with norm of order $\overline{\epsilon}$. The following lemma gives
some basic properties of the  complex random vector 
$\widetilde{\boldsymbol{\Phi}}(a,b)$:
\begin{lemma}\label{Lemma:Phip}
For all $a>0$ and $b\in\RR$, 
$\widetilde{\boldsymbol{\Phi}}(a,b)$ is a complex Gaussian random vector with 
mean $[0,\ldots,0]^\intercal\in\RR^J$, for which the covariance matrix and
the relation matrix both equal
$\diag[\|\widehat{\psi'_1}\|^2,\ldots,\|\widehat{\psi'_J}\|^2]/(2\pi a)^2\in\RR^{J\times J}$. 
Thus, for all $\boldsymbol{v}\in \RR^{p}$, $\boldsymbol{v}^\intercal
\widetilde{\boldsymbol{\Phi}}(a,b)$ is a complex Gaussian random variable 
with mean $0$ and variance 
$\sum_{j=1}^J \boldsymbol{v}_j^2\|\widehat{\psi'_j}\|^2/(2 \pi a)^2$.
\end{lemma}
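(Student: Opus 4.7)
The strategy is to mirror the proof of Lemma~\ref{Lemma:Phi} (for $\boldsymbol{\Phi}(a,b)$), tracking the extra factors introduced by the time-derivative $\partial_t$ and the normalization $i/(2\pi)$. I fix $a>0$ and $b\in\mathbb{R}$ throughout. First I would observe that each $(\psi_j^{(a,b)})'$ still lies in $\mathcal{S}$, so by the Gel'fand--Vilenkin construction of white noise $\widetilde{\Phi}_j(a,b):=(2\pi)^{-1}\Phi(i(\psi_j^{(a,b)})')$ is well defined and a centered complex Gaussian random variable. To compute the diagonal variance I would apply Plancherel to $\mathbb{E}|\widetilde{\Phi}_j(a,b)|^2=(2\pi)^{-2}\|(\psi_j^{(a,b)})'\|_{L^2}^2$. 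Using $\widehat{(\psi_j^{(a,b)})'}(\xi)=i2\pi\xi\sqrt{a}\,\widehat{\psi_j}(a\xi)e^{-i2\pi b\xi}$ together with the substitution $u=a\xi$ gives $\|(\psi_j^{(a,b)})'\|_{L^2}^2=a^{-2}\|\widehat{\psi_j'}\|_{L^2}^2$, whence $\mathrm{Var}(\widetilde{\Phi}_j)=\|\widehat{\psi_j'}\|^2/(2\pi a)^2$, as claimed.

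For the off-diagonal covariance $\mathbb{E}[\widetilde{\Phi}_i\overline{\widetilde{\Phi}_j}]$ with $i\neq j$, the same Parseval reduction yields an integral proportional to $\int u^2\,\widehat{\psi_i}(u)\,\widehat{\psi_j}(u)\,du$. This is the main obstacle in the proof: orthonormality of the $\psi_j$ gives only $\int\widehat{\psi_i}\,\widehat{\psi_j}\,du=\delta_{ij}$ and does not by itself force the $u^2$-weighted version to vanish, so the diagonality asserted in the lemma is a genuinely stronger property of the wavelet family. I would invoke an additional hypothesis at this step -- for instance, essentially disjoint frequency supports of the $\widehat{\psi_j}$ (a leading-order property of the Morse wavelet families used in the numerical experiments), with any residual cross-term absorbed into the $\widetilde{\boldsymbol{\epsilon}}$ contribution that appears later. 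The relation matrix $\mathbb{E}[\widetilde{\Phi}_i\widetilde{\Phi}_j]$ is handled in parallel: because each $\widehat{\psi_j}$ is real, the integrand produced by Parseval coincides with the one appearing in the covariance, so the two matrices are equal.

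Finally, for any $\boldsymbol{v}\in\mathbb{R}^J$ the linear combination $\boldsymbol{v}^\intercal\widetilde{\boldsymbol{\Phi}}(a,b)=(2\pi)^{-1}\Phi\bigl(i\sum_{j=1}^J v_j(\psi_j^{(a,b)})'\bigr)$ is a one-variable instance of the preceding computation, so it is a centered complex Gaussian whose variance is read off the diagonal covariance, namely $\sum_{j=1}^J v_j^2\,\|\widehat{\psi_j'}\|^2/(2\pi a)^2$. Apart from the diagonality step flagged in the previous paragraph, every part of the argument is a direct transcription of the proof of Lemma~\ref{Lemma:Phi}, with the multiplicative factors arising from the time-derivative carefully tracked.
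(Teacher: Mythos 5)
Your computation of the diagonal entries coincides with the paper's own proof, which consists essentially of the single line
\begin{equation*}
\text{Var}\big(\Phi(i(\psi_j^{(a,b)})')\big) \;=\; \|(\psi_j^{(a,b)})'\|_{L^2(\RR)}^2 \;=\; \|\psi_j'\|_{L^2(\RR)}^2/a^2 ,
\end{equation*}
everything else being declared ``the same as Lemma \ref{Lemma:Phi}''; your Fourier-side route to the factor $a^{-2}$ is just an equivalent form of the paper's change of variables, and you track the normalization $(2\pi)^{-1}$ correctly.

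The obstacle you flag at the off-diagonal entries is genuine, and the paper's proof does not address it. In Lemma \ref{Lemma:Phi} the off-diagonal covariances vanish because $\int \widehat{\psi_i}(\xi)\widehat{\psi_j}(\xi)\,\ud\xi=\delta_{i,j}$; the analogous quantity here is proportional to $\int (2\pi u)^2\,\widehat{\psi_i}(u)\,\widehat{\psi_j}(u)\,\ud u$, which orthonormality alone does not annihilate. Nor is a disjoint-support argument available under the stated hypotheses: all the supports $[1-\Delta_j,1+\Delta_j]$ contain the common point $1$, and for the concrete families used in the paper (Hermite, Morse) the inner products $\langle \psi_i',\psi_j'\rangle$ are generically nonzero for $i\neq j$. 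Consequently the diagonality assertion --- and with it the formula $\sum_{j}v_j^2\|\widehat{\psi_j'}\|^2/(2\pi a)^2$ for $\text{Var}(\boldsymbol{v}^\intercal\widetilde{\boldsymbol{\Phi}}(a,b))$ --- does not follow from the stated assumptions; the correct general statement replaces the diagonal matrix by the Gram matrix $\big[\langle(\psi_i^{(a,b)})',(\psi_j^{(a,b)})'\rangle\big]_{i,j}$, yielding $\text{Var}(\boldsymbol{v}^\intercal\widetilde{\boldsymbol{\Phi}}(a,b))=\sum_{i,j}v_iv_j\langle\psi_i',\psi_j'\rangle/(2\pi a)^2$. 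This is a defect of the lemma as stated (and of the paper's one-line proof) rather than of your argument; your proposed repairs --- an additional hypothesis on the $\psi_j$, or absorbing the cross terms into an error vector --- are reasonable, and the subsequent Proposition only uses pointwise norms of realized vectors such as $\widetilde{\boldsymbol{\epsilon}}+\sigma\widetilde{\boldsymbol{\Phi}}$ rather than the precise covariance structure, so the downstream estimates survive the correction.
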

\begin{proof}
The proof is the same as that of Lemma \ref{Lemma:Phi}, except 
for the following slight difference:
\begin{align}
\text{Var}(\Phi(i(\psi_j^{(a,b)})')&\,
=\mathbb{E}|\Phi((\psi_j^{(a,b)})')|^2
=\|(\psi_j^{(a,b)})'\|_{L^2(\RR)}
= \|\psi_j'\|_{L^2(\RR)}^2/a^2.\nonumber
\end{align}
\end{proof}
As a result, when $a\in Z_\ell(b)$, 
the reassignment rule, $\omega_Y^{(\psi^{[\boldsymbol{r}]})}(a,b)$, becomes
\begin{align*}
\omega_Y^{(\psi^{[\boldsymbol{r}]})}(a,b)
=\,&\frac{-i\partial_b W^{(\psi^{[\boldsymbol{r}]})}_G(a,b)
+2\pi\boldsymbol{r}^\intercal\sigma\widetilde{\boldsymbol{\Phi}}(a,b)}
{2\pi \left(W^{(\psi^{[\boldsymbol{r}]})}_G(a,b)
+\sigma \boldsymbol{r}^\intercal\boldsymbol{\Phi}(a,b)\right)}\\
=\,&\frac{\boldsymbol{r}^\intercal
\left[\varphi_\ell'(b)e^{i2\pi\varphi_\ell(b)}\boldsymbol{Q}_\ell(a,b)
+\widetilde{\boldsymbol{\epsilon}}(a,b)
+\sigma\widetilde{\boldsymbol{\Phi}}(a,b)\right]}
{\boldsymbol{r}^\intercal\left[e^{i 2\pi \varphi_\ell(b)}\boldsymbol{Q}_\ell(a,b)
+\boldsymbol{\epsilon}(a,b)+\sigma\boldsymbol{\Phi}(a,b)\right]},
\end{align*}

It follows from Lemma \ref{Lemma:Phi} and Lemma \ref{Lemma:Phip} that $\omega_Y^{(\psi^{[\boldsymbol{r}]})}(a,b)$ is a ratio random variable of two independent complex Gaussian random variables with non-zero means.

Note that we are implicitly assuming here that the denominator in the fraction for $\omega_Y^{(\psi^{[\boldsymbol{r}]})}(a,b)$ is not too small (see Section 2 in the main paper). In what follows, we shall make this explicit: we shall always assume that 
$$
2 \pi|W^{(\psi^{[\boldsymbol{r}]})}_Y(a,b)|
\,\left(\,= \left|\boldsymbol{r}^\intercal\left[e^{i 2\pi \varphi_\ell(b)}\boldsymbol{Q}_\ell(a,b)
+\boldsymbol{\epsilon}(a,b)+\sigma\boldsymbol{\Phi}(a,b)\right] \right|
\,\,\mbox{ if }\,a\in Z_\ell(b)\,\right)
$$ 
exceeds the value $2 \kappa$, where the value of $\kappa$ can be set (according to the signal characteristics and noise level). At the same time, we shall assume that $\overline{\epsilon}$ and $\sigma$ are sufficiently small that
$$
\mathbb{E}\left(
\|\boldsymbol{\epsilon}(a,b)+\sigma \boldsymbol{\Phi}(a,b)\|^2 \right) 
\leq \kappa^2\,.
$$
(This means that the threshold for the reassignment rule must be set in accordance with the rate of change of the amplitudes and the instantaneous frequencies of the individual constituent components in the clean signal, as well as with the level of the noise -- both eminently reasonable restrictions.) We shall see below how these restrictions will come into play. 

Let us first prove some technical Lemmas. 

\begin{lemma}\label{Lemma:division}
Fix $J\in\NN$ and $\kappa>0$. Denote
$S_\kappa:=\{\boldsymbol{r}\in S^{J-1}\, ; \,\boldsymbol{r}^\intercal
\boldsymbol{v}>\kappa\}$.
For $\boldsymbol{u}\in \CC^J$ and $\boldsymbol{v}\in\RR^J$, we have
\begin{equation}\label{Lemma:boundProjection}
\frac{1}{|S_\kappa|} \int_{S_\kappa}
\frac{\boldsymbol{r}^\intercal\boldsymbol{u}}
{\boldsymbol{r}^\intercal \boldsymbol{v}} \ud \boldsymbol{r}
=\mathfrak{p}_{\boldsymbol{v}}\boldsymbol{u}
:=\mathfrak{p}_{\boldsymbol{v}}\Re \boldsymbol{u}
+i\mathfrak{p}_{\boldsymbol{v}}\Im \boldsymbol{u},
\end{equation}
where $\Re \boldsymbol{u}$ is the real part of $\boldsymbol{u}$, 
$\Im \boldsymbol{u}$ is the imaginary part of $\boldsymbol{u}$ 
and $\mathfrak{p}_{\boldsymbol{v}}(\boldsymbol{w})$ is the component  
of the vector $\boldsymbol{w}$ along the direction of $\boldsymbol{v}$,
$\mathfrak{p}_{\boldsymbol{v}}(\boldsymbol{w}):= 
\boldsymbol{v}^\intercal\boldsymbol{w}/\|\boldsymbol{v}\|$.
Furthermore
\begin{equation}\label{Lemma:boundSquarePerp}
\frac{1}{|S_\kappa|} \int_{S_\kappa}
\left| \frac{\boldsymbol{r}^\intercal \boldsymbol{u}}
{\boldsymbol{r}^\intercal\boldsymbol{v}}\right|^2 \ud \boldsymbol{r}
=|\mathfrak{p}_{\boldsymbol{v}}\boldsymbol{u}|^2
+c\,\frac{\|\mathcal{P}^\perp_{\boldsymbol{v}}\boldsymbol{u}\|_2^2}{J-1},
\end{equation}
where $\mathcal{P}^\perp_{\boldsymbol{v}}$ 
is the projection operator onto the subspace perpendicular to $\boldsymbol{v}$ and 
$$
c=\frac{2\Gamma((J-1)/2)}{\sqrt{\pi}\Gamma(J/2)}
\int_\kappa^1\frac{(1-x^2)^{(J-1)/2}}{x^2}\ud x
\approx \frac{2\sqrt{2}}{\sqrt{\pi J}\kappa}.
$$
\end{lemma}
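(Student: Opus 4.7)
The plan is to reduce both spherical integrals to one-dimensional ones by exploiting the rotational symmetry of $S_\kappa$ around the axis $\boldsymbol{v}$. First I would decompose $\boldsymbol{u}$ as $\boldsymbol{u} = (\mathfrak{p}_{\boldsymbol{v}}\boldsymbol{u})\,\hat{\boldsymbol{v}} + \boldsymbol{u}^\perp$, where $\hat{\boldsymbol{v}} := \boldsymbol{v}/\|\boldsymbol{v}\|$ and $\boldsymbol{u}^\perp := \mathcal{P}^\perp_{\boldsymbol{v}}\boldsymbol{u}$; because $\boldsymbol{v}$ is real, this splitting extends to complex $\boldsymbol{u}$ componentwise through $\Re\boldsymbol{u}$ and $\Im\boldsymbol{u}$, with $\mathfrak{p}_{\boldsymbol{v}}\boldsymbol{u} = \mathfrak{p}_{\boldsymbol{v}}(\Re\boldsymbol{u}) + i\mathfrak{p}_{\boldsymbol{v}}(\Im\boldsymbol{u})$ as required by the statement. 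Substituting into the ratio gives
\[
\frac{\boldsymbol{r}^\intercal \boldsymbol{u}}{\boldsymbol{r}^\intercal \boldsymbol{v}}
= \frac{\mathfrak{p}_{\boldsymbol{v}}\boldsymbol{u}}{\|\boldsymbol{v}\|}
+ \frac{\boldsymbol{r}^\intercal \boldsymbol{u}^\perp}{\boldsymbol{r}^\intercal \boldsymbol{v}},
\]
so (under the implicit normalization $\|\boldsymbol{v}\|=1$ that makes the claimed equality dimensionally consistent) the first identity reduces to showing that the perpendicular term averages to zero over $S_\kappa$.

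The key observation is that for any orthogonal map $g\in O(\boldsymbol{v}^\perp)$, extended by the identity on $\mathbb{R}\boldsymbol{v}$, the condition $\boldsymbol{r}^\intercal\boldsymbol{v}>\kappa$ is invariant under $\boldsymbol{r}\mapsto g\boldsymbol{r}$, so $g(S_\kappa)=S_\kappa$. Changing variables $\boldsymbol{r}\mapsto g\boldsymbol{r}$ in $\int_{S_\kappa}\boldsymbol{r}^\intercal\boldsymbol{u}^\perp/(\boldsymbol{r}^\intercal\boldsymbol{v})\,d\boldsymbol{r}$ shows that it equals the corresponding integral with $\boldsymbol{u}^\perp$ replaced by $g^{-1}\boldsymbol{u}^\perp$; averaging uniformly over $g\in O(\boldsymbol{v}^\perp)$ then annihilates $\boldsymbol{u}^\perp$ (its orbit in $\boldsymbol{v}^\perp$ has mean $0$ whenever $J\geq 2$), and the perpendicular contribution vanishes, yielding the first identity.

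For the quadratic identity I would expand $|\boldsymbol{r}^\intercal\boldsymbol{u}/\boldsymbol{r}^\intercal\boldsymbol{v}|^2$ using the same decomposition into three pieces: a constant $|\mathfrak{p}_{\boldsymbol{v}}\boldsymbol{u}|^2/\|\boldsymbol{v}\|^2$, cross terms linear in $\boldsymbol{r}^\intercal\boldsymbol{u}^\perp/(\boldsymbol{r}^\intercal\boldsymbol{v})$ that vanish by exactly the same symmetry as above, and the pure quadratic remainder $((\boldsymbol{r}^\intercal\Re\boldsymbol{u}^\perp)^2+(\boldsymbol{r}^\intercal\Im\boldsymbol{u}^\perp)^2)/(\boldsymbol{r}^\intercal\boldsymbol{v})^2$. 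Invoking $O(\boldsymbol{v}^\perp)$-invariance once more, for each real $\boldsymbol{w}\in\boldsymbol{v}^\perp$ the integral $\int_{S_\kappa}(\boldsymbol{r}^\intercal\boldsymbol{w})^2/(\boldsymbol{r}^\intercal\boldsymbol{v})^2\,d\boldsymbol{r}$ depends only on $\|\boldsymbol{w}\|$; letting $\boldsymbol{w}$ range over an orthonormal basis $\{\boldsymbol{e}_k\}_{k=1}^{J-1}$ of $\boldsymbol{v}^\perp$ and summing, with $\sum_k(\boldsymbol{r}^\intercal \boldsymbol{e}_k)^2 = 1-(\boldsymbol{r}^\intercal\hat{\boldsymbol{v}})^2$, forces
\[
\int_{S_\kappa}\frac{(\boldsymbol{r}^\intercal \boldsymbol{w})^2}{(\boldsymbol{r}^\intercal\boldsymbol{v})^2}\,d\boldsymbol{r}
= \frac{\|\boldsymbol{w}\|^2}{J-1}
\int_{S_\kappa}\frac{1-(\boldsymbol{r}^\intercal\hat{\boldsymbol{v}})^2}{(\boldsymbol{r}^\intercal\boldsymbol{v})^2}\,d\boldsymbol{r}.
\]
Summing the real and imaginary contributions supplies the factor $\|\mathcal{P}^\perp_{\boldsymbol{v}}\boldsymbol{u}\|^2/(J-1)$ multiplying the universal one-dimensional integral that, after normalization, becomes $c$.

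The last step evaluates $c$ by parametrizing $S^{J-1}$ along the axis $\hat{\boldsymbol{v}}$: the surface measure factors as $(1-t^2)^{(J-3)/2}\,dt\,d\omega_{S^{J-2}}$ with $t=\boldsymbol{r}^\intercal\hat{\boldsymbol{v}}$, so the $S^{J-2}$ factor cancels and only a one-dimensional integral over $t\in(\kappa,1]$ remains; the Gamma-function prefactor arises from normalizing by the appropriate (half-)cap area, and the large-$J$ asymptotic $c\approx 2\sqrt{2}/(\kappa\sqrt{\pi J})$ follows from Stirling applied to $\Gamma((J-1)/2)/\Gamma(J/2) \approx \sqrt{2/J}$ combined with the small-$\kappa$ approximation $\int_\kappa^1 (1-x^2)^{(J-1)/2}/x^2\,dx \approx \int_\kappa^1 x^{-2}\,dx \approx 1/\kappa$. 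The one delicate point is the symmetry reduction itself, specifically verifying that $S_\kappa$ is invariant under the full group $O(\boldsymbol{v}^\perp)$ (which is immediate from its definition); once that is in hand, a single symmetry does all the heavy lifting and the rest is a routine, if bookkeeping-heavy, spherical calculation.
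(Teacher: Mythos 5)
Your proposal is correct and follows essentially the same route as the paper's proof: both reduce the spherical integrals by symmetry about the axis $\boldsymbol{v}$ (the paper rotates $\boldsymbol{v}$ to $\boldsymbol{e}_1$ and uses coordinate sign-reversals and permutations, which is a concrete instance of your $O(\boldsymbol{v}^\perp)$-averaging), then identify the remaining radial factor via the same one-dimensional integral $\int_\kappa^1 (1-x^2)^{(J-1)/2}x^{-2}\,\ud x$ and the same Stirling-plus-$1/\kappa$ asymptotics. The normalization $\|\boldsymbol{v}\|=1$ that you flag as implicit is also assumed without loss of generality in the paper's argument.
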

\begin{proof}
Without loss of generality, we can assume that 
$\boldsymbol{u}\in\RR^p$ and $\|\boldsymbol{v}\|=1$. 
We can find 
$\mathcal{R}\in SO(J)$ so that 
$\mathcal{R}\boldsymbol{v}=\boldsymbol{e}_1$, where 
$\boldsymbol{e}_1:=[1,0,\ldots,0]^\intercal \in S^{J-1}$. 
Under this change of variable, we write
$$
\mathcal{R}\boldsymbol{u}=[d_1,d_2,\ldots,d_J]^\intercal\in\RR^J,
$$
where
$$
d_1=\boldsymbol{u}^\intercal\boldsymbol{v}=\mathfrak{p}_{\boldsymbol{v}}(\boldsymbol{u}).
$$
As a result, we have
\begin{align*}
&\frac{1}{|S_\kappa|} \int_{S_\kappa}
\frac{\boldsymbol{r}^\intercal \boldsymbol{u}}
{\boldsymbol{r}^\intercal \boldsymbol{v}}
\ud \boldsymbol{r} = 
\frac{1}{|S_\kappa|} \int_{S_\kappa}
\frac{(\mathcal{R}\boldsymbol{r})^\intercal \mathcal{R}\boldsymbol{u}}
{(\mathcal{R}\boldsymbol{r})^\intercal \mathcal{R}\boldsymbol{v}}
\ud \boldsymbol{r} \\
=\,&\frac{1}{|S_\kappa|}\int_{\{\boldsymbol{r}\in S^{J-1};\,r_1>\kappa\}}
\frac{\boldsymbol{r}^\intercal [d_1,d_2,\ldots,d_J]^\intercal}
{\boldsymbol{r}^\intercal\boldsymbol{e}_1}\ud \boldsymbol{r}\\
=\,&\frac{1}{|S_\kappa|}\int_{\{\boldsymbol{r}\in S^{J-1};\,r_1>\kappa\}}
\frac{\sum_{j=1}^J r_j d_j}{r_1}\ud \boldsymbol{r},
\end{align*}
which becomes $d_1=\mathfrak{p}_{\boldsymbol{v}}\boldsymbol{u}$ 
since, for $j \neq 1$, $\frac{r_j}{r_1}$ is an odd function of $r_j$, and the 
domain of integration is invariant under sign reversal of $r_j$. 
For the second part, note that by the same change of variable, we have
\begin{align*}
&\frac{1}{|S_\kappa|}\int_{S_\kappa}
\left|\frac{\boldsymbol{r}^\intercal\boldsymbol{u}}
{\boldsymbol{r}^\intercal\boldsymbol{v}}\right|^2 \ud \boldsymbol{r}\\
=\,&\frac{1}{|S_\kappa|}
\int_{\{\boldsymbol{r}\in S^{J-1};\,r_1>\kappa\}}
\frac{\left(\sum_{j=1}^J r_j d_j\right)^2}{r^2_1}\ud \boldsymbol{r}\\
=\,&\frac{1}{|S_\kappa|}
\int_{\{\boldsymbol{r}\in S^{J-1};\,r_1>\kappa\}}
\frac{\sum_{j=1}^J r^2_j d^2_j+2\sum_{i\neq j}r_i r_j d_i d_j}{r^2_1}
\ud \boldsymbol{r}\\
=\,&d_1^2+\sum_{j=2}^J d^2_j \frac{1}{|S_\kappa|}
\int_{\{\boldsymbol{r}\in S^{J-1};\,r_1>\kappa\}}
\frac{r^2_j}{r^2_1} \ud \boldsymbol{r},
\end{align*}
where the last equality holds because 
$\frac{1}{|S_\kappa|} \int_{\{\boldsymbol{r}\in S^{J-1};\,r_1>\kappa\}}
\frac{\sum_{i\neq j} r_i r_j d_i d_j}{r^2_1}\ud \boldsymbol{r}=0$ 
since in each term $\frac{r_i r_j}{r^2_1}$ 
with $i \neq j$, there is at least one index different from 1, so that this term 
changes sign when it is mirrored with respect to that index, while
the domain of integration is invariant under this mirroring operation. 
For the other terms, note that when $j=2,\ldots,p$, symmetry arguments imply that
\begin{align*}
&\frac{1}{|S_\kappa|}
\int_{\{\boldsymbol{r}\in S^{J-1};\,r_1>\kappa\}}
\frac{r^2_j}{r^2_1}\ud \boldsymbol{r}\\
=\,&\frac{1}{|S_\kappa|}
\frac{1}{J-1} \int_{\{\boldsymbol{r}\in S^{J-1};\,r_1>\kappa\}}\frac{\sum_{l=2}^pr^2_j}{r^2_1}\ud \boldsymbol{r}\\
=\,&\frac{1}{|S_\kappa|}\frac{1}{J-1}\int_{\{\boldsymbol{r}\in S^{J-1};\,r_1>\kappa\}}\frac{1-r^2_1}{r^2_1}\ud \boldsymbol{r}.
\end{align*}
To evaluate the last term, we use spherical coordinates in $J$ dimensions. 
Rewrite $\boldsymbol{r}=[r_1,\ldots,r_J]^\intercal \in S^{J-1}$ as
\begin{align*}\left\{
\begin{array}{l}
r_1=\cos\varphi_1\\
r_2=\sin\varphi_1\cos\varphi_2\\
\vdots\\
r_{J-1}=\sin\varphi_1\ldots\sin\varphi_{J-2}\cos\varphi_{J-1}\\
r_{J}=\sin\varphi_1\ldots\sin\varphi_{J-2}\sin\varphi_{J-1},
\end{array}\right.
\end{align*}
where $\varphi_1,\ldots,\varphi_{J-2}\in [0,\pi)$ and $\varphi_{J-1}\in [0,2\pi)$. 
In this coordinate system, the volume form becomes 
$\ud \boldsymbol{r}=(\sin^{J-2}\varphi_1) (\sin^{J-3}\varphi_2)\ldots
(\sin\varphi_{J-2})\,\ud \varphi_{J-1} \,\ud \varphi_{J-2}\ldots \ud\varphi_1$; 
 we obtain thus
\begin{align*}
&\int_{\{\boldsymbol{r}\in S^{J-1};\,r_1>\kappa\}}
\frac{1}{r^2_1}\,\ud \boldsymbol{r}\\
=\,&2\int_{I_\kappa} 
\int_0^\pi \ldots \int_0^\pi \int_{0}^{2\pi} 
\frac{\sin^{J-2}\varphi_1 \, \sin^{J-3}\varphi_2\,\ldots,\sin\varphi_{J-2}}
{\cos^2 \varphi_1}\,\ud \varphi_{J-1}\,\ud \varphi_{J-2} \ldots \,\ud\varphi_1\\
=\,&2|S^{J-2}|\int_{I_\kappa}  
\frac{\sin^{J-2}\varphi_1 }{\cos^2 \varphi_1} \,\ud\varphi_1,
\end{align*}
where $I_\kappa=\{\varphi_1\in[0,\pi/2];\, \cos\varphi_1>\kappa\}$. Similarly, we have
\[
\int_{\{\boldsymbol{r}\in S^{J-1};\,r_1>\kappa\}}\ud \boldsymbol{r}
=2\,|S^{J-2}|\int_{I_\kappa}  \sin^{J-2}\varphi_1 \,\ud\varphi_1.
\]
By putting the above together, we have finished the claim since
\begin{align*}
\frac{1}{|S_\kappa|}
\int_{S_\kappa} 
\left|\frac{\boldsymbol{r}^\intercal\boldsymbol{u}}
{\boldsymbol{r}^\intercal \boldsymbol{v}}\right|^2 \ud\boldsymbol{r}
=\,d_1^2
+c\,\frac{\sum_{j=2}^J d^2_j}{J-1}
=|\mathfrak{p}_{\boldsymbol{v}}\boldsymbol{u}|^2
+c \, \frac{\|\mathcal{P}^\perp_{\boldsymbol{v}}\boldsymbol{u}\|_2^2}{J-1},
\end{align*}
where
\begin{equation}\label{proof:lemma:square:constant}
c=\frac{2|S^{J-2}|}{|S_\kappa|}
\int_{I_\kappa}  \sin^{J-2}\varphi_1 
\left(\frac{1}{\cos^2 \varphi_1}-1\right) \ud\varphi_1
=\frac{2\Gamma((J-1)/2)}{\sqrt{\pi}\Gamma(J/2)}\int_{I_\kappa} \frac{ \sin^{J}
\varphi }{\cos^2 \varphi}\,
\ud\varphi.
\end{equation}
Notice that the Gamma function ratio 
$\frac{\Gamma((J-1)/2)}{\Gamma(J/2)}$ can be asymptotically 
approximated by $(J/2)^{-1/2}$ as $J\to \infty$ and that
\[
\int_{I_\kappa} \frac{ \sin^{J}\varphi }{\cos^2 \varphi}\ud\varphi
=\int_\kappa^1\frac{(1-u^2)^{(J-1)/2}}{u^2}\ud u
=\int_\kappa^1\frac{1}{u^2}(1+O(u^2))\ud u
=\frac{1}{\kappa}+O(1).
\]
It follows that $c$ is approximately
\begin{equation}\label{proof:Lemma:approximationOfc}
c\approx \frac{2\sqrt{2}}{\sqrt{\pi J}\kappa}.
\end{equation}
\end{proof}
We are now ready to study the statistical behavior of $\omega_Y^{(\psi^{[\boldsymbol{r}]})}(a,b)$  as the unit vector $\boldsymbol{r}$ is picked randomly, uniformly in $\widetilde{S}_\kappa^{(\ell)} =\{\boldsymbol{r}\in S^{J-1}\,;\, \boldsymbol{r}^\intercal\left(e^{-i2\pi\varphi_\ell(b)}\boldsymbol{Q}_\ell(a,b)+\boldsymbol{\epsilon(a,b)}+\sigma\boldsymbol{\Phi(a,b)}\right)>2\kappa\}\subset S^{J-1}$. In the next proposition, we keep $\ell$, $b$ and $a$ fixed, on the understanding that $a\in Z_\ell(b)$. To ease up on notation, we shall suppress $(a,b)$ and $\ell$ in the notation, and use $\omega_Y^{(\psi^{[\boldsymbol{r}]})}$, $\boldsymbol{Q}$, $\varphi(b)$, $\widetilde{\boldsymbol{\epsilon}}$, $\widetilde{\boldsymbol{\Phi}}$, $\widetilde{S}_\kappa$, etc, to denote $\omega_Y^{(\psi^{\boldsymbol{r}})}(a,b)$, $\boldsymbol{Q}_\ell(a,b)$, $\varphi_\ell(b)$, $\widetilde{\boldsymbol{\epsilon}}(a,b)$, $\widetilde{\boldsymbol{\Phi}}(a,b)$, $\widetilde{S}_\kappa^{(\ell)}$, etc.
\begin{prop}
Fix a realization of $\Phi$, $\kappa>0$, $b\in\RR$ and $a\in Z_\ell(b)$. Assume that $\boldsymbol{r}$ is sampled uniformly from $\widetilde{S}_\kappa=\{\boldsymbol{r}\in S^{J-1}\,;\, \boldsymbol{r}^\intercal\left(e^{-i2\pi\varphi(b)}\boldsymbol{Q}+\boldsymbol{\epsilon}+\sigma\boldsymbol{\Phi}\right)>2\kappa\}\subset S^{J-1}$. When $\|\boldsymbol{\epsilon}+\sigma\boldsymbol{\Phi}\|_2^2<\kappa$, we have
\begin{align}
\mathbb{E}_{\boldsymbol{r}}\omega_Y^{(\psi^{[\boldsymbol{r}]})}
&=\varphi'(b)+e^{-i 2\pi \varphi(b)} \mathfrak{p}_{\boldsymbol{Q}}
\left(\widetilde{\boldsymbol{\epsilon}}+\sigma\widetilde{\boldsymbol{\Phi}}
-\varphi'(b)[\boldsymbol{\epsilon}+\sigma\boldsymbol{\Phi}]\right)+E_1,\label{Lemma:ExpectationOverrOfOmega}
\end{align}
where $\mathbb{E}_{\boldsymbol{r}}$ is the expectation of $\omega_Y^{(\psi^{[\boldsymbol{r}]})}(a,b)$ as $\boldsymbol{r}$ is sampled randomly and uniformly from $\widetilde{S}_\kappa$, 
and $E_1$ is bounded by
\begin{equation}\label{proof:Lemma2:Expectation:ErrorBound}
|E_1|\leq
\frac{1}{2}\,
\left(\left[ 1-\frac{c}{J-1} \right] 
|\mathfrak{p}_{\boldsymbol{Q}}
\left(\widetilde{\boldsymbol{\epsilon}}+\sigma\widetilde{\boldsymbol{\Phi}}-
\varphi'(b)[\boldsymbol{\epsilon}+\sigma\boldsymbol{\Phi}]\right)|^2
+ c\, \frac{\|
\widetilde{\boldsymbol{\epsilon}}+\sigma\widetilde{\boldsymbol{\Phi}}-
\varphi'(b)[\boldsymbol{\epsilon}+\sigma\boldsymbol{\Phi}]\|^2}{J-1}\right)^{1/2}
~. 
\end{equation}
Furthermore we have
\begin{align*}
\text{Var}_{\boldsymbol{r}}\,\omega_Y^{(\psi^{[\boldsymbol{r}]})}
\leq&\,\frac{5}{2}\,\left(
\left[ 1-\frac{c}{J-1} \right] 
|\mathfrak{p}_{\boldsymbol{Q}}
\left(\widetilde{\boldsymbol{\epsilon}}+\sigma\widetilde{\boldsymbol{\Phi}}-
\varphi'(b)[\boldsymbol{\epsilon}+\sigma\boldsymbol{\Phi}]\right)|^2
+ c\, \frac{\|
\widetilde{\boldsymbol{\epsilon}}+\sigma\widetilde{\boldsymbol{\Phi}}-
\varphi'(b)[\boldsymbol{\epsilon}+\sigma\boldsymbol{\Phi}]\|^2}{J-1}\right)~. 
\end{align*}
where $\text{Var}_{\boldsymbol{r}}$ is the variance of 
$\omega_Y^{(\psi^{[\boldsymbol{r}]})}$ over $\widetilde{S}_\kappa$.
\end{prop}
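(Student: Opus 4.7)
The plan is to simplify $\omega_Y^{(\psi^{[\boldsymbol{r}]})}-\varphi'(b)$ to a ratio whose denominator, though $\boldsymbol{r}$-dependent and complex, can be reduced to a \emph{real} linear functional via a geometric expansion; at that point the two identities of Lemma~\ref{Lemma:division} will deliver both the mean and the $|E_1|$ and variance bounds. Starting from the displayed fraction for $\omega_Y^{(\psi^{[\boldsymbol{r}]})}$ just above the statement,
\[
\omega_Y^{(\psi^{[\boldsymbol{r}]})}-\varphi'(b)=\frac{\boldsymbol{r}^\intercal\boldsymbol{V}}{\boldsymbol{r}^\intercal\bigl[e^{i2\pi\varphi(b)}\boldsymbol{Q}+\boldsymbol{\zeta}\bigr]}, \quad \boldsymbol{V}:=\widetilde{\boldsymbol{\epsilon}}+\sigma\widetilde{\boldsymbol{\Phi}}-\varphi'(b)\,\boldsymbol{\zeta},\quad \boldsymbol{\zeta}:=\boldsymbol{\epsilon}+\sigma\boldsymbol{\Phi}.
\]
Since $\boldsymbol{Q}\in\RR^J$, one may write $\boldsymbol{r}^\intercal\bigl[e^{i2\pi\varphi(b)}\boldsymbol{Q}+\boldsymbol{\zeta}\bigr]=e^{i2\pi\varphi(b)}(\boldsymbol{r}^\intercal\boldsymbol{Q})(1+\eta)$, with $\eta:=e^{-i2\pi\varphi(b)}(\boldsymbol{r}^\intercal\boldsymbol{\zeta})/(\boldsymbol{r}^\intercal\boldsymbol{Q})$. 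The hypothesis $\|\boldsymbol{\zeta}\|_2^2<\kappa$ together with the threshold defining $\widetilde{S}_\kappa$ gives $|\boldsymbol{r}^\intercal\boldsymbol{Q}|>\kappa$ and $|\eta|<1$ uniformly, so the geometric expansion $(1+\eta)^{-1}=1-\eta/(1+\eta)$ is legitimate and
\[
\omega_Y^{(\psi^{[\boldsymbol{r}]})}-\varphi'(b)=e^{-i2\pi\varphi(b)}\,\frac{\boldsymbol{r}^\intercal\boldsymbol{V}}{\boldsymbol{r}^\intercal\boldsymbol{Q}}\;-\;e^{-i2\pi\varphi(b)}\,\frac{\boldsymbol{r}^\intercal\boldsymbol{V}}{\boldsymbol{r}^\intercal\boldsymbol{Q}}\cdot\frac{\eta}{1+\eta},
\]
which is the decomposition into principal and residual.

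\textbf{Main term and the bound on $E_1$.} Applying the first identity of Lemma~\ref{Lemma:division} with the real vector $\boldsymbol{v}=\boldsymbol{Q}$ and the complex $\boldsymbol{u}=\boldsymbol{V}$ gives $\mathbb{E}_{\boldsymbol{r}}\bigl[(\boldsymbol{r}^\intercal\boldsymbol{V})/(\boldsymbol{r}^\intercal\boldsymbol{Q})\bigr]=\mathfrak{p}_{\boldsymbol{Q}}(\boldsymbol{V})$, yielding the principal term $e^{-i2\pi\varphi(b)}\mathfrak{p}_{\boldsymbol{Q}}(\boldsymbol{V})$ claimed in \eqref{Lemma:ExpectationOverrOfOmega}. (Although Lemma~\ref{Lemma:division} integrates over $S_\kappa=\{\boldsymbol{r}^\intercal\boldsymbol{Q}>\kappa\}$ rather than $\widetilde{S}_\kappa$, the noise hypothesis forces $\widetilde{S}_\kappa\subseteq S_\kappa$, and the resulting boundary discrepancy gets absorbed into $E_1$.) For $E_1$, Cauchy--Schwarz in $\boldsymbol{r}$ applied to the residual, together with the uniform bound $|\eta/(1+\eta)|\leq \tfrac12$ arising from the noise-threshold conditions, yields
\[
|E_1|\leq \tfrac12\,\Bigl(\mathbb{E}_{\boldsymbol{r}}\bigl|(\boldsymbol{r}^\intercal\boldsymbol{V})/(\boldsymbol{r}^\intercal\boldsymbol{Q})\bigr|^2\Bigr)^{1/2}.
\]
The second identity of Lemma~\ref{Lemma:division} then evaluates the second moment exactly:
\[
\mathbb{E}_{\boldsymbol{r}}\Bigl|\tfrac{\boldsymbol{r}^\intercal\boldsymbol{V}}{\boldsymbol{r}^\intercal\boldsymbol{Q}}\Bigr|^2
=|\mathfrak{p}_{\boldsymbol{Q}}\boldsymbol{V}|^2+\tfrac{c}{J-1}\|\mathcal{P}^\perp_{\boldsymbol{Q}}\boldsymbol{V}\|^2
=\Bigl(1-\tfrac{c}{J-1}\Bigr)|\mathfrak{p}_{\boldsymbol{Q}}\boldsymbol{V}|^2+\tfrac{c}{J-1}\|\boldsymbol{V}\|^2,
\]
the second form following from the Pythagorean identity $\|\boldsymbol{V}\|^2=|\mathfrak{p}_{\boldsymbol{Q}}\boldsymbol{V}|^2+\|\mathcal{P}^\perp_{\boldsymbol{Q}}\boldsymbol{V}\|^2$. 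This is precisely the quantity under the square root in \eqref{proof:Lemma2:Expectation:ErrorBound}.

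\textbf{Variance and main obstacle.} For the variance, write $\mathrm{Var}_{\boldsymbol{r}}X=\mathbb{E}_{\boldsymbol{r}}|X|^2-|\mathbb{E}_{\boldsymbol{r}}X|^2$ with $X=\omega_Y^{(\psi^{[\boldsymbol{r}]})}-\varphi'(b)$; expanding $X$ as principal-plus-residual and iterating $|a+b|^2\leq 2|a|^2+2|b|^2$ bounds $\mathbb{E}_{\boldsymbol{r}}|X|^2$ by a small multiple of $\mathbb{E}_{\boldsymbol{r}}|(\boldsymbol{r}^\intercal\boldsymbol{V})/(\boldsymbol{r}^\intercal\boldsymbol{Q})|^2$; the constant $5/2$ emerges from tracking coefficients (principally a factor $2$ from the quadratic inequality together with $(\tfrac12)^2$ from the residual), and the same closed-form second-moment identity evaluates the outcome. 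The hardest part is precisely this bookkeeping: one must quantify the uniform control on $|\eta|$ and the discrepancy between $\widetilde{S}_\kappa$ and $S_\kappa$ sharply enough that the residual terms do not degrade the stated constants, while preserving the two features that make the bound attractive --- the $(1-c/(J-1))$ suppression of the component of $\boldsymbol{V}$ along $\boldsymbol{Q}$ and the $1/(J-1)$ gain from averaging over a larger family of wavelets.
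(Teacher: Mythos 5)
Your proposal follows essentially the same route as the paper's proof: the geometric expansion $(1+\eta)^{-1}=1-\eta/(1+\eta)$ is exactly the algebraic identity the paper uses to split off the principal term from the residual $E_1$, the bound on $|E_1|$ via the uniform $\tfrac12$-control of the residual factor followed by Cauchy--Schwarz and the second-moment identity of the division lemma is identical, and the variance bound with the $2\cdot\tfrac54=\tfrac52$ bookkeeping matches the paper's computation. The one subtlety --- that the division lemma is stated for the symmetric cap $S_\kappa=\{\boldsymbol{r}^\intercal\boldsymbol{v}>\kappa\}$ while the averaging is over the noise-dependent set $\widetilde{S}_\kappa$ --- is glossed over by the paper itself, and you at least flag it explicitly.
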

Before the proof, we have the following remark about the Proposition.
\begin{remark}
In the statement of this proposition, we encounter several times the
expression $\left|\mathfrak{p}_{\boldsymbol{Q}}\boldsymbol{V} \right|$
(using the shorthand notation $\boldsymbol{V}
=\widetilde{\boldsymbol{\epsilon}}
+\sigma\widetilde{\boldsymbol{\Phi}}
-\varphi'(b)[\boldsymbol{\epsilon}+\sigma\boldsymbol{\Phi}]\,$),
which can be bounded by 
$\| \boldsymbol{V} \|$. 
In practice, however, the term $\left|\mathfrak{p}_{\boldsymbol{Q}}\boldsymbol{V} \right|$ will likely be significantly smaller
than its norm, with high probability if $J$ is large. Indeed,
the vector $\boldsymbol{Q}$ is fixed,
while the vector 
$\boldsymbol{V}$ is a random vector
in $J$ dimensions, depending on the random realization of the noise function 
$\Phi$, which is much more likely than not to lie in a region near the equator, 
perpendicular to $\boldsymbol{Q}$, 
since this region contributes the lion share of the sphere ``area'' (really
a $J-1$-dimensional volume),
increasingly so as $J$ increases. 
Denoting $I_\gamma:=\{\varphi_1\in[0,\pi/2]\,;\, 0\leq\cos(\varphi_1)<\gamma\}$,
we have indeed 
\begin{align*}
&|\{\boldsymbol{r}\in S^{J-1};\,0\leq r_1<\gamma\}|\\
=\,&\int_{I_\gamma}\int_0^\pi\ldots\int_0^\pi\int_{0}^{2\pi} 
(\sin^{J-2}\varphi_1) (\sin^{J-3}\varphi_2)\,\ldots(\sin\varphi_{J-2})\,
\ud \varphi_{J-1}\,\ud\varphi_{J-2}\,\ldots \,\ud\varphi_1\\
=\,&|S^{J-2}|\int_{I_\gamma}\sin^{J-2}\varphi_1 \ud\varphi_1.
\end{align*}
Consequently,
\[
\frac{|\{\boldsymbol{r}\in S^{J-1};\,r_1<\gamma\}|}
{|\{\boldsymbol{r}\in S^{J-1};\,0\leq r_1 \leq 1\}|}
=\frac{\int_{I_\gamma}\sin^{J-2}\varphi\,\ud\varphi}
{\int_{0}^{\pi/2}\sin^{J-2}\varphi\,\ud\varphi}
=1-\frac{\int_{\gamma}^{1}(1-u^2)^{(J-3)/2}\ud u }
{\int_{0}^1(1-u^2)^{(J-3)/2}\ud u},
\]
which approaches $1$ as $J$ increases to $\infty$.
\end{remark}

\begin{proof} (of the Proposition.)
To simplify the notation in the computation, we set
$\boldsymbol{A}:= \varphi'(b)e^{i2\pi\varphi(b)}\boldsymbol{Q} $, 
$\boldsymbol{a}:= \widetilde{\boldsymbol{\epsilon}}
+\sigma\widetilde{\boldsymbol{\Phi}} $,
$\boldsymbol{B}:= e^{i2\pi\varphi(b)}\boldsymbol{Q} = \boldsymbol{A}/ \varphi'(b)$,
$\boldsymbol{b}:= \boldsymbol{\epsilon}+\sigma\boldsymbol{\Phi} $.

By the assumption that $\boldsymbol{r}$ is sampled uniformly from $S_\kappa$, 
we have
\begin{align}
\mathbb{E}_{\boldsymbol{r}}\omega_Y^{(\psi^{[\boldsymbol{r}]})}=
\,&\frac{1}{|S_\kappa|}\int_{S_\kappa}
\frac{\boldsymbol{r}^\intercal\left(\boldsymbol{A}+\boldsymbol{a}\right)}
{\boldsymbol{r}^\intercal\left(\boldsymbol{B}+\boldsymbol{b}\right)}
\ud \boldsymbol{r}=
\,\frac{1}{|S_\kappa|}\int_{S_\kappa}
\frac{\boldsymbol{r}^\intercal\left(\varphi'(b)\boldsymbol{B}+\boldsymbol{a}\right)}
{\boldsymbol{r}^\intercal\left(\boldsymbol{B}+\boldsymbol{b}\right)}
\ud \boldsymbol{r}
\nonumber\\
=\,&\frac{\varphi'(b)}{|S_\kappa|}
\int_{S_\kappa}\left[\,1\,+\,\frac{\boldsymbol{r}^\intercal 
\left(\frac{\boldsymbol{a}}{\varphi'(b)}-\boldsymbol{b} \right)}
{\boldsymbol{r}^\intercal\left(\boldsymbol{B}+\boldsymbol{b}\right)}\,\right]\,
\ud \boldsymbol{r}\nonumber\\
=\,&\varphi'(b)+\frac{1}{|S_\kappa|}
\int_{S_\kappa}
\frac{\boldsymbol{r}^\intercal
\left(\boldsymbol{a}-\varphi'(b)\boldsymbol{b}\right)}
{\boldsymbol{r}^\intercal\left(\boldsymbol{B}+\boldsymbol{b}\right)}
\,\ud \boldsymbol{r}\,.\label{proof:Lemma:Integrant:Expectation}
\end{align}
We next use the identity
\[
\frac{\boldsymbol{r}^\intercal
\left(\boldsymbol{a}-\varphi'(b)\boldsymbol{b}\right)}
{\boldsymbol{r}^\intercal\left(\boldsymbol{B}+\boldsymbol{b}\right)}
= \frac{\boldsymbol{r}^\intercal
\left(\boldsymbol{a}-\varphi'(b)\boldsymbol{b}\right)}
{\boldsymbol{r}^\intercal\boldsymbol{B}}\,-\,
\frac{\boldsymbol{r}^\intercal \boldsymbol{b}}
{\boldsymbol{r}^\intercal\boldsymbol{B}}\,
\frac{\boldsymbol{r}^\intercal\left(\boldsymbol{a}-\varphi'(b)\boldsymbol{b}\right)}
{\boldsymbol{r}^\intercal\left(\boldsymbol{B}+\boldsymbol{b}\right)},
\]
combining it with Lemma \ref{Lemma:division} (since $\boldsymbol{Q}\in\RR^p$),  
to obtain
\begin{align}
\mathbb{E}_{\boldsymbol{r}}\omega_Y^{(\psi^{[\boldsymbol{r}]})}
=\,&\varphi_k'(b)+
e^{-i2\pi\varphi(b)}\mathfrak{p}_{\boldsymbol{Q}}\left(\widetilde{\boldsymbol{\epsilon}}+\sigma\widetilde{\boldsymbol{\Phi}}-
\varphi'(b)[\boldsymbol{\epsilon}+\sigma\boldsymbol{\Phi}]\right)+E_1,\label{proof:Lemma2:Expectation:ErrorExpression0}
\end{align}
where
\begin{align}
E_1:=-\,\frac{1}{|S_\kappa|}\int_{S_\kappa}
\frac{\boldsymbol{r}^\intercal \boldsymbol{b}}
{\boldsymbol{r}^\intercal\boldsymbol{B}}\,
\frac{\boldsymbol{r}^\intercal\left(\boldsymbol{a}-\varphi'(b)\boldsymbol{b}\right)}
{\boldsymbol{r}^\intercal\left(\boldsymbol{B}+\boldsymbol{b}\right)}
\, \ud\boldsymbol{r}.\label{proof:Lemma2:Expectation:ErrorExpression}
\end{align}
Note that by the assumptions that 
$\|\boldsymbol{\epsilon}+\sigma\boldsymbol{\Phi}\|_2\leq \kappa$ 
and 
$|\boldsymbol{r}^\intercal\left(e^{i2\pi\varphi(b)}\boldsymbol{Q}+\boldsymbol{\epsilon}+\sigma\boldsymbol{\Phi}\right)|>2\kappa$, we 
have
\[
\frac{|\boldsymbol{r}^\intercal \boldsymbol{b}|}
{|\boldsymbol{r}^\intercal\left(\boldsymbol{B}+\boldsymbol{b}\right)|} 
< \frac{1}{2}\,,
\]
so that
\begin{align}
|E_1|\leq \frac{1}{|S_\kappa|}\int_{S_\kappa}
\frac{|\boldsymbol{r}^\intercal\left(\boldsymbol{a}-\varphi'(b)\boldsymbol{b}\right)|}
{2|\boldsymbol{r}^\intercal\boldsymbol{Q}|} \ud\boldsymbol{r}.\nonumber
\end{align}
Next, we apply the Cauchy-Schwarz inequality to this integral, together
with Lemma \ref{Lemma:division}, which leads to
\begin{align*}
|E_1|&\leq \frac{1}{|S_\kappa|}
\left[ \int_{S_\kappa } \ud\boldsymbol{r}\right]^{1/2}
\left[ \int_{S_\kappa } \frac{|\boldsymbol{r}^\intercal\left(\boldsymbol{a}-\varphi'(b)\boldsymbol{b}\right)|^2}
{2|\boldsymbol{r}^\intercal\boldsymbol{Q}|^2}\right]^{1/2}\\
&\leq \frac{1}{2}\,
\left(  
|\mathfrak{p}_{\boldsymbol{Q}}
\left(\boldsymbol{a}-\varphi'(b)\boldsymbol{b}\right)|^2
+ c\, \frac{\|\mathcal{P}_{\boldsymbol{Q}}^\perp
\left(\boldsymbol{a}-\varphi'(b)\boldsymbol{b}\right)\|^2}{J-1}
\right)^{1/2}\\
&= \frac{1}{2}\,
\left( \left[ 1-\frac{c}{J-1} \right] |\mathfrak{p}_{\boldsymbol{Q}}
\left(\boldsymbol{a}-\varphi'(b)\boldsymbol{b}\right)|^2 +
c\,\frac{\|\boldsymbol{a}-\varphi'(b)\boldsymbol{b}\|^2}{J-1} \right)^{1/2}\\
&= \frac{1}{2}\,
\left(\left[ 1-\frac{c}{J-1} \right] 
|\mathfrak{p}_{\boldsymbol{Q}}
\left(\widetilde{\boldsymbol{\epsilon}}+\sigma\widetilde{\boldsymbol{\Phi}}-
\varphi'(b)[\boldsymbol{\epsilon}+\sigma\boldsymbol{\Phi}]\right)|^2
+ c\, \frac{\|
\widetilde{\boldsymbol{\epsilon}}+\sigma\widetilde{\boldsymbol{\Phi}}-
\varphi'(b)[\boldsymbol{\epsilon}+\sigma\boldsymbol{\Phi}]\|^2}{J-1}\right)^{1/2}~. 
\end{align*}

The variance can be evaluated in the same manner. 
Noting that for a random variable $X$, 
$\text{Var}X=\text{Var}(X-c)$ for any constant $c$, we have
\begin{align}
\text{Var}_{\boldsymbol{r}}\omega_Y^{(\psi^{[\boldsymbol{r}]})}=
&\,\text{Var}_{\boldsymbol{r}}[\omega_Y^{(\psi^{[\boldsymbol{r}]})}-\varphi'(b)]\nonumber\\
=&\,\frac{1}{|S_\kappa|}\int_{S_\kappa}
\left|\frac
{\boldsymbol{r}^\intercal\left(\boldsymbol{a}-\varphi'(b)\boldsymbol{b}\right)}
{\boldsymbol{r}^\intercal\left(\boldsymbol{B}+\boldsymbol{b}\right)}\right|^2\ud \boldsymbol{r}
-\left|\frac{1}{|S_\kappa|}\int_{S_\kappa}
\frac{\boldsymbol{r}^\intercal\left(\boldsymbol{a}-\varphi'(b)\boldsymbol{b}\right)}{\boldsymbol{r}^\intercal\left(\boldsymbol{B}+\boldsymbol{b}\right)}
\,\ud \boldsymbol{r}\right|^2.\nonumber\\
\leq& \,\frac{1}{|S_\kappa|}\int_{S_\kappa}
\left|\frac
{\boldsymbol{r}^\intercal\left(\boldsymbol{a}-\varphi'(b)\boldsymbol{b}\right)}
{\boldsymbol{r}^\intercal\left(\boldsymbol{B}+\boldsymbol{b}\right)}\right|^2\ud \boldsymbol{r}\label{Lemma:EvaluateVariance1}
\end{align}
This last expression (\ref{Lemma:EvaluateVariance1}) can be bounded by
\begin{align*}
&\frac{2}{|S_\kappa|}
\int_{S_\kappa}\left(\left|\frac
{\boldsymbol{r}^\intercal\left(\boldsymbol{a}-\varphi'(b)\boldsymbol{b}\right)}
{ \boldsymbol{r}^\intercal\boldsymbol{B}}\right|^2 + 
\left| \frac
{\boldsymbol{r}^\intercal\boldsymbol{b}}
{ \boldsymbol{r}^\intercal\boldsymbol{B}}\,
\frac
{\boldsymbol{r}^\intercal\left(\boldsymbol{a}-\varphi'(b)\boldsymbol{b}\right)}
{\boldsymbol{r}^\intercal\left(\boldsymbol{B}+\boldsymbol{b}\right)}
\right|^2 \right)\, \ud\boldsymbol{r}\\
&\quad \leq \frac{2}{|S_\kappa|}\,\frac{5}{4}
\int_{S_\kappa}\left|\frac
{\boldsymbol{r}^\intercal\left(\boldsymbol{a}-\varphi'(b)\boldsymbol{b}\right)}
{ \boldsymbol{r}^\intercal\boldsymbol{B}}\right|^2\, \ud\boldsymbol{r}~.
\end{align*}
We have encountered this exact same integral before, and bounded it by invoking
Lemma \ref{Lemma:division}. We thus obtain
\begin{align*}
\text{Var}_{\boldsymbol{r}}\omega_Y^{(\psi^{[\boldsymbol{r}]})}
&\leq \frac{5}{2} \,\left(
 \left[ 1-\frac{c}{J-1} \right] |\mathfrak{p}_{\boldsymbol{Q}}
\left(\boldsymbol{a}-\varphi'(b)\boldsymbol{b}\right)|^2 +
c\,\frac{\|\boldsymbol{a}-\varphi'(b)\boldsymbol{b}\|^2}{J-1} \right)\\
&= \frac{5}{2}\,\left(
\left[ 1-\frac{c}{J-1} \right] 
|\mathfrak{p}_{\boldsymbol{Q}}
\left(\widetilde{\boldsymbol{\epsilon}}+\sigma\widetilde{\boldsymbol{\Phi}}-
\varphi'(b)[\boldsymbol{\epsilon}+\sigma\boldsymbol{\Phi}]\right)|^2
+ c\, \frac{\|
\widetilde{\boldsymbol{\epsilon}}+\sigma\widetilde{\boldsymbol{\Phi}}-
\varphi'(b)[\boldsymbol{\epsilon}+\sigma\boldsymbol{\Phi}]\|^2}{J-1}\right)~. 
\end{align*}
\end{proof}

This concludes this section concerning the
details for the technical estimates
in section 3 of the main paper. 

\section*{ESM-4. Numerical results}\label{ESM-3}

As described in the main paper, we consider both CWT and STFT-based ConceFT representations. 
In both cases, the orthogonal family of reference functions
(wavelets for the CWT, windows for the STFT) are the eigenfunctions, up to 
a certain order, of a time-frequency localization operator that is particularly
well suited to the CWT or STFT framework \cite{DaubPaul:1988,Olhede_Walden:2002,
Daubechies:1988,Xiao_Flandrin:2007}. 
Figure \ref{fig:ESM-4.1} below shows the shape and
size of TF domains of this type. In both cases, the shapes correspond to
a two-parameter family, and the localization operators behave approximately like
projection operators.
More precisely, once the parameters $\Lambda$ determining the shape are picked, there
is a natural family of (commuting) operators $T^{(\Lambda,R)}$ and an orthonormal family
of functions $\psi_j^{(\Lambda)}$ such that
\[
T^{(\Lambda,R)} \psi_j^{(\Lambda)} = E_j^{(\Lambda,R)}\,\psi_j^{(\Lambda)},
\]
where the eigenvalues $E_j^{(\Lambda,R)}$, all between 0 and 1, constitute a
strictly decreasing sequence, tending to $0$ as $j$ tends to $\infty$; for fixed
$\Lambda$ and $j$, each $E_j^{(\Lambda,R)}$ increases with $R$, tending to 1 as 
$R$ (which indicates the size of the region characterized by $\Lambda$)
tends to $\infty$. The eigenfunctions themselves (which do {\em not} depend on $R$)
are scaled and possibly
chirped Hermite functions for the STFT case, and Morse functions in the CWT case. 
%\vspace*{-.5 in}

\begin{figure}[h!]
\centering
\includegraphics[width=\textwidth]{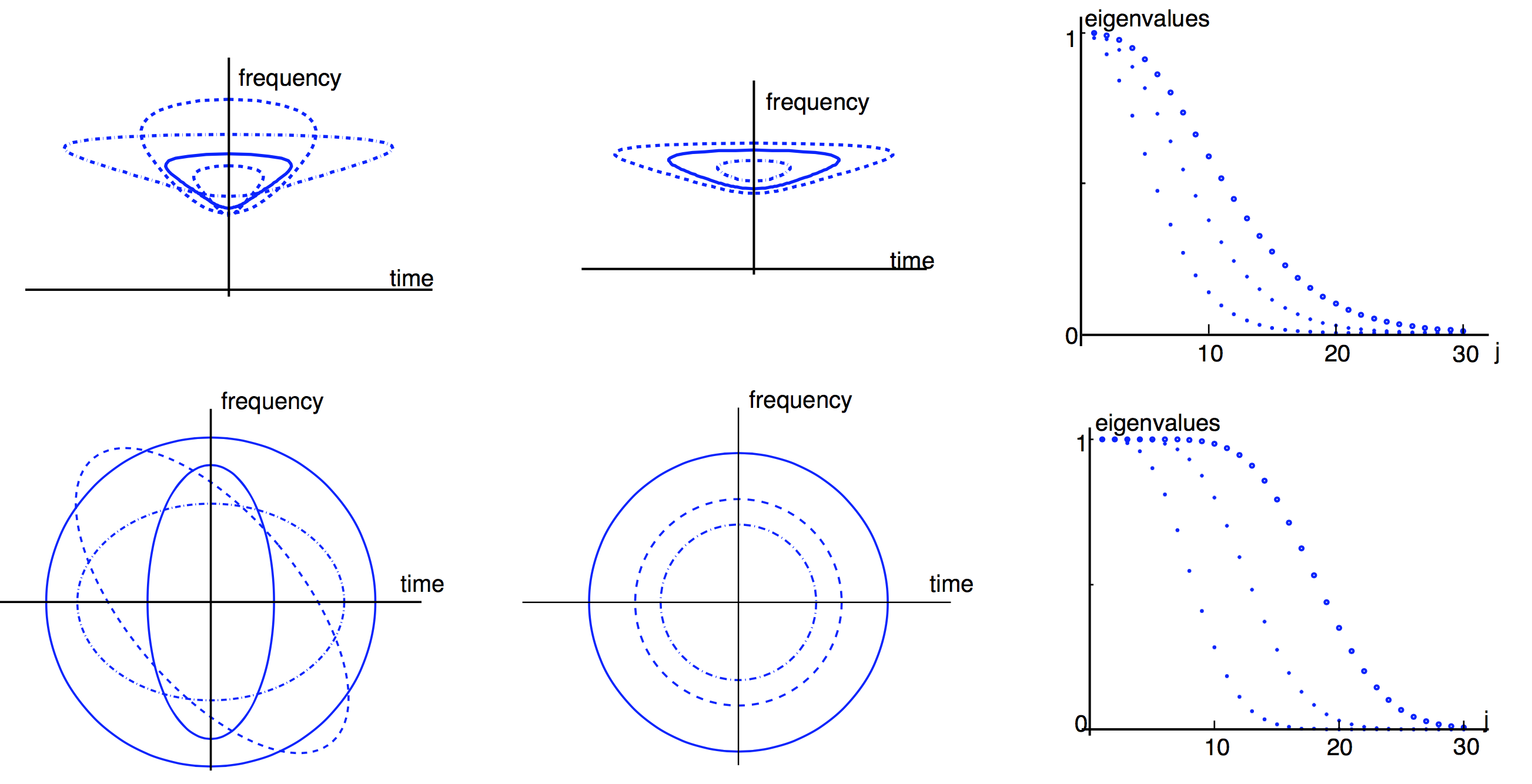}
\caption{Localization domains in the TF plane for the  reference windows or CWT reference wavelets:
Top: CWT, bottom: STFT. From left to right: different shapes of the TF domain, corresponding to
different parameter choices $\Lambda$; 
different sizes of one domain shape, corresponding (for one fixed $\Lambda$) to different $R$; eigenvalues $E_j^{(\Lambda,R)}$, for different $R$.\label{fig:ESM-4.1}}%}
\end{figure}

It seems natural to pick these special orthonormal families, since each family provides,
in some sense (made precise in \cite{DaubPaul:1988,Olhede_Walden:2002,
Daubechies:1988,Xiao_Flandrin:2007}) the ``best'' localization, simultaneously,
by different orthonormal functions, for one shared time-frequency domain. (A similar
reason underlies the choice, in standard multi-taper methods for spectral estimation,
of the prolate spheroidal wave functions for the taper functions \cite{Thomson:1982, Percival:1993, Babadi_Brown:2014}.) 
However, the method does not depend on 
these particular choices, and it is not only conceivable, but indeed likely, that for
particular applications, other choices may be more suitable and give better results. 

\subsection*{ESM-4a Data simulation\label{sec:data}}

Figure \ref{fig:ESM-4.2}
below shows the graph of (the restriction to $[15,40]$ of)
another signal $s^\ast \in \mathcal{C}$. 
This signal is used in the main paper to illustrate 
the action of ConceFT on a signal from $\mathcal{C}$ that
has played no role in calibrating the ConceFT parameters (unlike $s$).

\begin{figure}[h!]
\begin{centering}
\includegraphics[width=.95\textwidth]{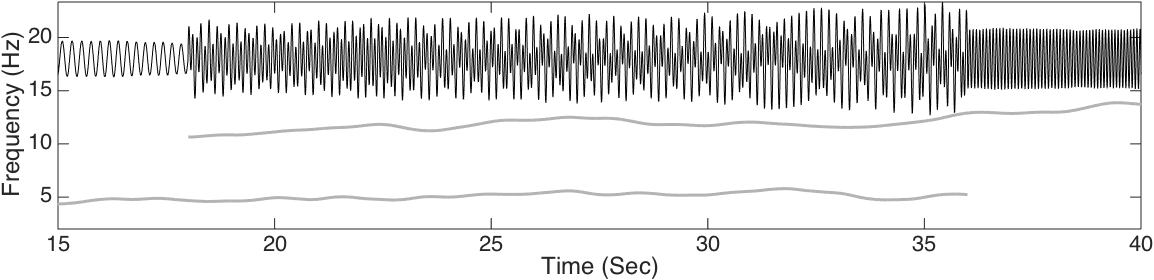}
\end{centering}
\caption{\label{fig:ESM-4.2} Another signal $s^\ast$ (in black) in $\mathcal{C}$, and the corresponding instantaneous frequencies (in gray) of the two components, restricted to the time interval $[15,40]$.} 
\end{figure}

Figure \ref{fig:ESM-4.3} plots a realization of 
$Y^\ast(t)=s^\ast(t)+\sigma\xi(t)$ for each
of the three noise processes (Gaussian, ARMA(1,1) and Poisson), 
restricted to the subinterval $[15,40]$.
\begin{figure}[h!]
\begin{centering}
\includegraphics[width=.95\textwidth]{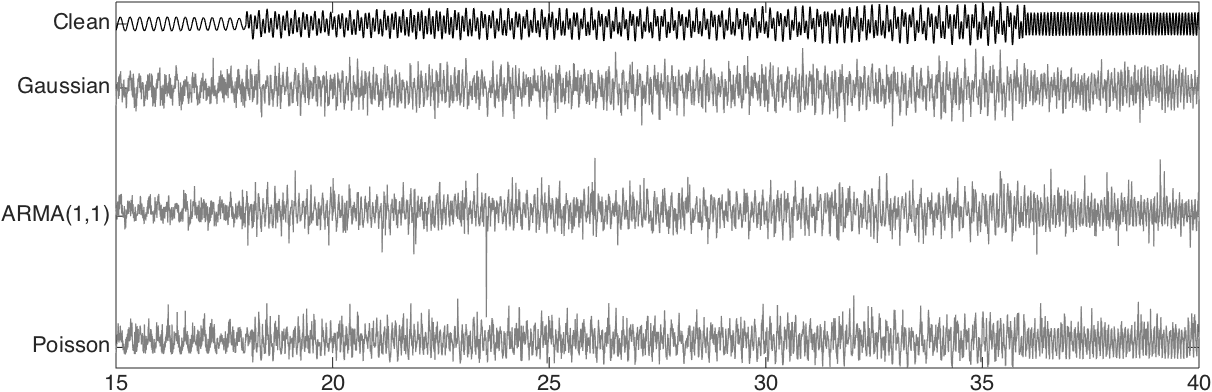}
\end{centering}
\caption{\label{fig:ESM-4.3} The restrictions to $[15,40]$ of the noisy signal $Y^\ast=s^\ast+\sigma \xi$ (2nd row to bottom), where $s^{\ast}$ is the clean signal from the previous figure (plotted again in the top row) and where the added noise is Gaussian, ARMA, or Poisson noise (in order, from); in each case $\sigma$ is picked so that the noisy signal has 0 dB SNR (signal to noise ratio). The four plots are at the same vertical scale.}
\end{figure}

\subsection*{ESM-4b Performance evaluation}

To assess the performance of ConceFT, we must compare the time-varying Power Spectrum (tvPS) $\widetilde{\texttt{P}}_Y$, as estimated via ConceFT, with the {\it ideal time-varying power spectrum} (itvPS) of the clean simulated signal $s$, defined (in a natural interpretation of its construction procedure) as
\[
\texttt{P}_{s}(t,\omega):= \sum_{k=1}^2 A_k^2(t) \delta_{\varphi'_k(t)(\omega)}. \nonumber
\]
Viewing both the itvPS and the tvPS as distributions on the TF-plane, we want to assess, in particular, whether the regions in the TF plane where they each concentrate, coincide or lie close to each other. The {\em Optimal Transport} (OT) distance (also called the Earth Mover distance) is a distance that is designed to do this: given two probability measures on the same set, their OT-distance gives the amount of ``work'' needed to ``deform'' one into the other. A bit more precisely, it computes the total (integral/sum of the product) {\em mass} $\times$ {\em distance traveled} for the transformation (i.e. the transportation plan that minimizes this quantity) that maps one to the other. Because the principle of ConceFT is to ``reassign'' content in the TF plane, keeping the time-variable fixed (see Section 2), we also compute the OT-distance for each individual $t$ (keeping $t$ fixed), and then take the average over all $t \in [0,T]$. This has a fortuitous advantage, in that it reduces the OT-distance computations to 1-dimensional problems, for which there exists a computational short-cut: the standard definition for the OT-distance between probability distributions $\mu$ and $\nu$ on a metric space $({\tt S},d)$ involves an optimization over $\mathcal{P}(\mu,\nu)$, the set of all probability measures on ${\tt S}\times{\tt S}$ that have $\mu$ and $\nu$ as marginals,
\[
d_{\mbox{\footnotesize{OT}}}(\mu,\nu):= \inf_{\rho \in \mathcal{P}(\mu,\nu)}\int\,d(x,y)\,
\ud \rho(x,y)~,
\]
which can be computationally quite expensive. In the one-dimensional case (i.e. when ${\tt S}\subset \RR$, and $d$ is the canonical Euclidean distance, $d(x,y)=|x-y|$), however, it turns out (see e.g. section 2.2 in \cite{Villani:book}) that, defining  $f_\mu(x)=\int_{-\infty}^x\, \ud \mu $ (analogously for $f_{\nu}$), we have
\[
d_{\mbox{\footnotesize{OT}}}(\mu,\nu)= \int_{\mbox{\tt S}}\, |f_\mu(x)-f_\nu(x)|\, \ud x ~.
\]

The OT-distance is defined for {\em probability} distributions, and it is by no means guaranteed that the positive functions $\widetilde{\texttt{P}}_{Y}(t,\cdot)$ and $\texttt{P}_s(t,\cdot)$ have integral 1 for all $t$; for this reason, we normalize them before computing their OT-distance. We may also want to capture (and penalize in the distance metric) possible differences in the total weights of $\widetilde{\texttt{P}}_{Y}(t,\cdot)$ and $\texttt{P}_s(t,\cdot)$; we can introduce a term for this as well. More precisely, assuming that the frequency domain over which $\widetilde{\texttt{P}}_{Y}$ and $\texttt{P}_s$ range is $[0,\Omega]$, and assuming also that $\int_0^T \int_0^{\Omega} \,\widetilde{\texttt{P}}_{Y}(t,\omega)\,\ud \omega \,\ud t= \int_0^T \int_0^{\Omega} \,\texttt{P}_s(t,\omega)\,\ud \omega \,\ud t $ (which can be achieved by multiplying $ \widetilde{\texttt{P}}_{Y}$ with a constant, if necessary), we define
\begin{align*}
\widetilde{p}_{Y}(t,\omega)=\int_{0}^\omega \widetilde{\texttt{P}}_{Y}(t,\xi)\,\ud \xi \quad~&~\quad
p_{s}(t,\omega)= \int_{0}^\omega \texttt{P}_s(t,\xi)\,\ud \xi \\
\widetilde{\rho}_{Y}(t,\omega)=\widetilde{p}_{Y}(t,\omega)/\widetilde{p}_{Y}(t,\Omega)
\quad ~&~ \quad
\rho_{s}(t,\omega)=p_s(t,\omega)/p_s(t,\Omega) \\
\mbox{\tt{D}}_{\alpha}(\widetilde{\texttt{P}}_{Y}, \texttt{P}_s)= \alpha \, \frac{1}{T}\,\int_0^T\, 
\frac{|\widetilde{p}_{Y}(t,\Omega)- p_s(t,\Omega)|}
{\widetilde{p}_{Y}(t,\Omega)+ p_s(t,\Omega) }\,\ud t \,&+\,
(1-\alpha)\,\frac{1}{T}\,\int_0^T\int_0^{\Omega}\,\left|\widetilde{\rho}_{Y}(t,\omega) 
-\rho_{s}(t,\omega) \right|\,\ud \omega \, \ud t
\end{align*}
In practice, we picked $\alpha =0$ in our evaluations, since the corresponding pure OT distance already gave us a reasonable way to quantify how well a tvPS reflected ``its'' itvPS, consistent with our (subjective) appraisals. In concrete computations, the integrals are approximated by sums of the corresponding discretized quantities. 

\subsection*{ESM-4c Parameter Selection}
In this subsection, we report the details of our exploration of the parameter space, leading to our choice of $\beta =30$, $\gamma=9$ and $J=2$ as the optimal one for the CWT-based ConceFT algorithm, when applied to the signal class $\mathcal{C}$.

We applied ConceFT to the noisy signals, with $\gamma=3,4,\cdots,10$ (8 choices); $\beta=20,30,\cdots,70$ (6 choices) and $J=1,2,3,4$ (4 choices). All 192 possible combinations of these options are investigated. For each example and each parameter setting, we applied the ConceFT algorithm 10 times, each time with 10 random projections; the average of the OT distances over these 10 attempts was then computed.

Figure \ref{fig:ESM-4.5} visualizes the results by means of a ``heat map''. In this figure, the $x$-axis is the selection of $\gamma$ and $\beta$, the $y$-axis is $J$ and the color at each entry represents the averaged OT distance for the corresponding choice of the parameters $\gamma$, $\beta$ and $J$; the lighter the color, the smaller the OT distance and hence the better the performance. The Figure shows the averaged OT distance of the ConceFT result for all choices of parameters, for one signal $s^\#$ in $\mathcal{C}$, and three types of noise, giving three heat maps in total. The $x$-coordinate in each heat map cycles through the 6 values of $\beta$ before it moves on to a new value of $\gamma$; Table \ref{table:ESM-4.4} below gives the value of $x$ for each pair of Morse parameters considered. 

\begin{table}[h!]
\begin{center}
\begin{tabular}{|c|c|c|c|c|c|c|c|c|}
\hline
&$\gamma=3$ &$\gamma=4$&$\gamma=5$&$\gamma=6$&$\gamma=7$&$\gamma=8$&$\gamma=9$&$\gamma=10$ \\\hline
$\beta=20$&  1 & 7 &13&19&25&31&37 &43\\\hline
$\beta=30$&  2 &8 &14& 20&26&32&38&44  \\\hline
$\beta=40$&  3 &9 &15& 21&27&33&38&45  \\\hline
$\beta=50$&  4 &10 &16& 22&28&34&39&46  \\\hline
$\beta=60$&  5 &11 &17& 23&29&35&40&47  \\\hline
$\beta=70$&  6 &12&18& 24&30&36&41&48\\\hline
\end{tabular}
\vspace{0.1in}
\caption{The numbers on the $x$ axis and their corresponding Morse parameters.\label{table:ESM-4.4}}
\end{center}
\end{table}
\begin{figure}[h!]
\begin{centering}
\includegraphics[width=\textwidth]{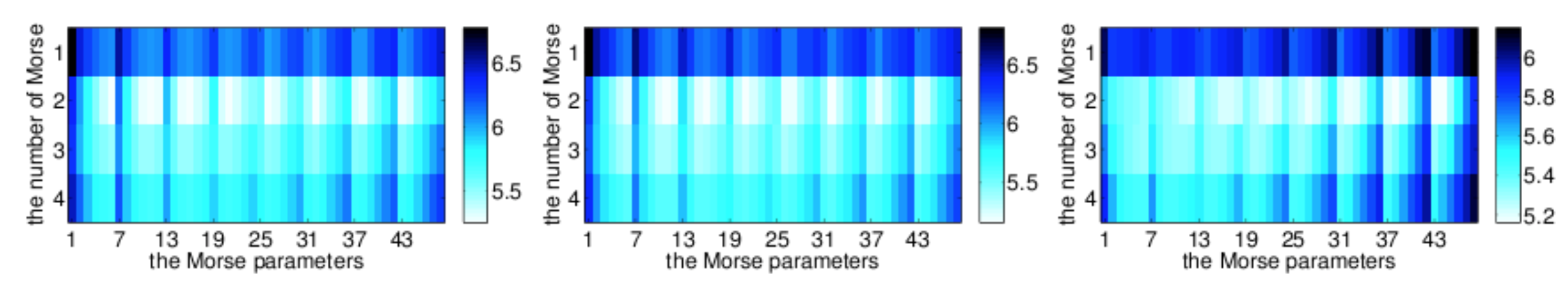}
\end{centering}
\caption{\label{fig:ESM-4.5} Exploring the parameter space: heat maps visualizing the OT distance between the itvPS for a clean signal and the ConceFT-based tvPS of noisy versions, with SNR of 0 dB, for a two-component signal $s^\#$ in $\mathcal{C}$, and for three different types of additive noise: Gaussian (left), ARMA(1,1) (middle) and Poisson (right). Each heat map shows the results for the 192 different parameter combinations described in the text. The color of each box represents the OT distance: lighter colors indicate better performance. }
\end{figure}

When we computed similar heat maps for other randomly picked signals in $\mathcal{C}$, the results were virtually identical. Our exploration showed that the combination $\beta=30,\,\gamma=9$, $J=2$ lead to the best performance; we thus chose these values for the remainder of the paper.
 
\subsection*{ESM-4e. ConceFT results for noisy signals} 
Figure \ref{fig:ESM-4.6} is the analog of Figure 8 in the main paper, for $s$, the signal used to calibrate the parameter $N$ for the ConceFT algorithm rather than the ``new'' signal $s^{\ast}$. 

\begin{figure}[h!]
\begin{centering}
\includegraphics[width=\textwidth]{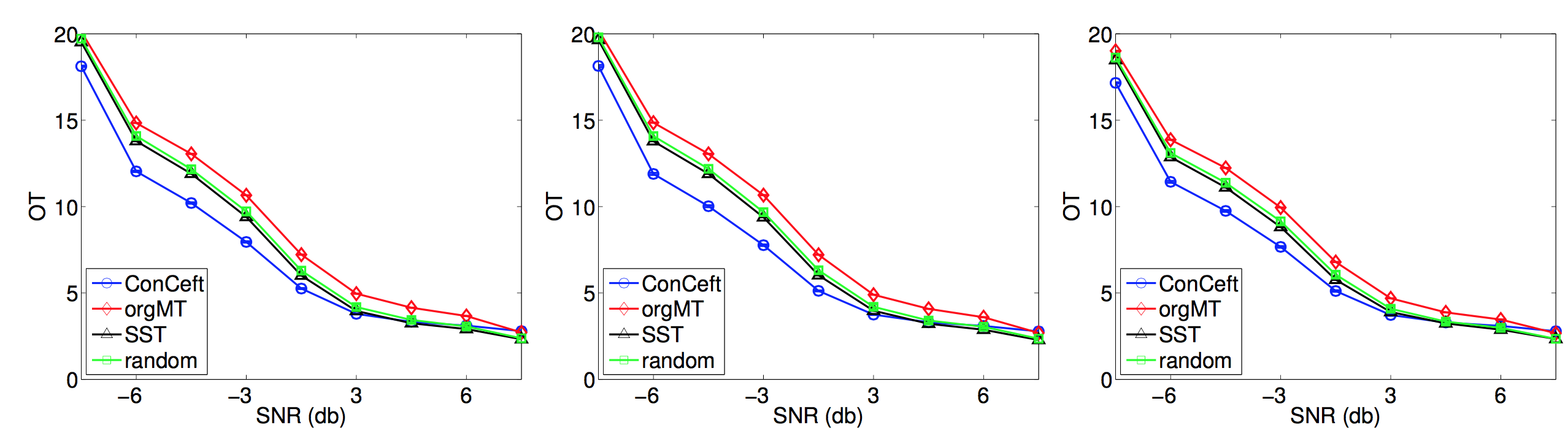}
\end{centering}
\caption{\label{fig:ESM-4.6} OT distance of CWT-based ConceFT results against signal to noise ratio (SNR) for the signal $s(t)$, and comparison with standard SST (with respect to the lowest-order Morse wavelet in black, and a random combination of the first two Morse wavelets in green) and standard multi-taper SST, both also CWT-based (see text). Noise type (left to right): Gaussian, ARMA(1,1), and Poisson. The ConceFT result is the mean OT-distance for 20 independent ConceFT computations; the standard deviation is smaller than the size of the marker.} 
\end{figure}

For each type of noise (Gaussian, ARMA(1,1) or Poisson) and each SNR considered (i.e. $x$ dB, where $x \in \{-7,-6,\ldots,6,7\}$), 20 independent realizations of the noise process are considered; for each of the resulting noisy signals the ConceFT analysis and the OT-distance of the resulting tvPS to the itvPS of the clean signal are computed; the mean and the standard deviation for each are shown in  Figure ESM-4.6. 

Figure \ref{fig:ESM-4.6} also compares the ConceFT results with those of simple SST (using 
either the first Morse wavelet with parameters $\beta=30,\,\gamma=9$ as reference wavelet,
or {\em one} random linear combination of the two first Morse wavelets) and of multi-taper SST
(denoted as \textsf{orgMT}), using the same $\psi_j$ as ConceFT. For each of these alternate methods,
we likewise computed the mean OT-distance of the tvPS to the itvPS for 20 noise realizations. 
(Note that the results are very similar to those in Figure 8 in the main paper, for $s^{\ast}$.)

\subsection*{ESM-4.f ConceFT with STFT\label{sec:STFT}}

As a complement to Figure 2 of the main paper, which shows STFT-based ConceFT results and compares them with other SST-based algorithms (simple STFT-based SST with a Gaussian window, or multi-taper SST), we show below the results of STFT-based ConceFT for the same signals $s$ and $s^{\ast}$ for which the main paper showed, in Figure 7, CWT-based ConceFT results. Before we do this, we give some more details about how these STFT-based ConceFT results are obtained. 

We explain in the main paper that SST can be defined starting from a STFT just as well as from a CWT. The whole ConceFT analysis, theoretical as well as numerical, can be carried out equally well using such STFT-SST representations. At the start of Section ESM-4, above, we explained the rationale for choosing Morse functions for the $\psi_j$; this rationale leads similarly to the choice of Hermite functions as the natural basis windows $h_j$ for STFT-based ConceFT. As in the CWT case, the choice of the family is completely fixed by the values of two parameters; in the STFT case these correspond to the eccentricity of the elliptic localization domain in the TF-plane and the tilt of the major axis of this ellipse with the time-axis (see Figure \ref{fig:ESM-4.1} bottom-left). Since there is no a priori reason to expect that chirping the Hermite functions in any direction (which tilting the elliptic domain would lead to) gives any advantage, we left this parameter out of consideration. The remaining parameter then corresponds to scaling the Hermite functions.

In analogy with the analysis in subsection EMS-4c, we thus explored the OT-distance of the STFT-based ConceFT tvPS of signals in $\mathcal{C}$ to their itvPS, for different rescalings and different numbers $J$ of Hermite functions. Minimizing this OT-distance led us to picking Hermite functions for which the underlying Gaussian function was scaled so that the bandwidth $h=5/16$; that is, the Gaussian function is $\frac{4}{\sqrt{5\sqrt{\pi}}}e^{-128t^2/25}$ (when measured in samples, since the sampling rate is $160$Hz, this corresponds to an effective width of $600$ samples, or $3.75$ sec, for the window functions $h_j$); the optimal number $J$ of functions was $4$. We then kept these parameter choices for our further experiments. The number $N$ of randomly picked linear combinations of the window functions was taken to be $20$, as in the CWT case. 

Once all the parameters are fixed, we can use the calibrated STFT-based ConceFT approach to study noisy versions of signals in $\mathcal{C}$. To compress dynamical range of the tvPS plots, we use the same trick as for Figure 7 of the main paper: we first reduce all the tvPS to the same total ``energy'', by multiplying each discretized tvPS $\boldsymbol{\widetilde{\texttt{P}}}_Y \in \RR^{m \times n}$ with an appropriate constant so that 
the ``mean energy'' of all entries equals the same number for all subfigures; that is, for some $\theta>0$ so that $\frac{1}{nm}\sum_{k=1}^m\sum_{l=1}^n \big(\boldsymbol{\widetilde{\texttt{P}}}_Y\big)_{k,l} = \theta$. We take $\theta=5$, as in the main paper, for Figures 7 and 9. Then we plot $\boldsymbol{R}\in\RR^{m\times n}$ rather than $\boldsymbol{\widetilde{\texttt{P}}}_Y\in\RR^{m\times n}$ itself, where $\boldsymbol{R}_{k,l}:=\log(1+\min\{(\boldsymbol{\widetilde{\texttt{P}}}_Y)_{k,l},q\})$, $k=1,\ldots,m$, $l=1,\ldots,n$ and $q$ is the same cut-off as used in the main paper in Figures 7 and 9 (ensuring that the gray-scale value plots are all comparable).

\begin{figure}[h!]
\begin{centering}
\includegraphics[width=\textwidth]{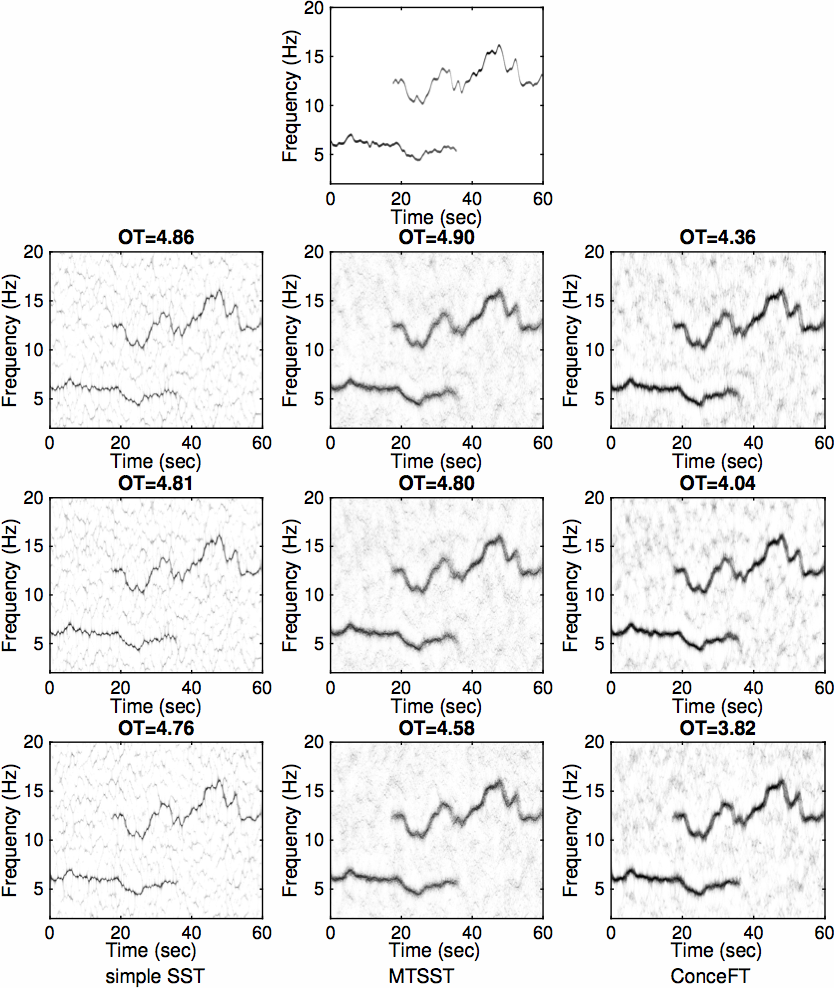}
\end{centering}
\caption{\label{fig:ESM-4.7} STFT-based results for noisy versions of the same signal $s$ as in Figures 3, 4 and the top half of Figure 7 in the main paper; the noisy versions considered are also the same realizations as in Figure 4 in the main paper. Top: itvPS of the clean signal $s$; next three columns: the tvPS of $Y$ with different noises. Each column corresponds to one algorithm; each row to one noise type. Noise types: from top to bottom, in order: Gaussian, ARMA$(1,1)$ and Poisson noise, in each case with SNR of 0 dB. Different approaches: from left to right, in order: SST using a Gaussian window with the bandwidth $h=5/16$; STFT-based multi-taper-SST (using the top 6 Hermite functions: the same Gaussian again, and the next 5 Hermite functions); STFT-based ConceFT (using 20 random combinations of the top 4 Hermite functions). 
}
\end{figure}

Figure \ref{fig:ESM-4.7} shows the results for STFT-based SST, with a Gaussian window with the bandwidth $h=5/16$, on the (discretized) signal $Y(t_k) = s(t_k) + \sigma \xi_k$, where the $\xi_k$ are i.i.d. realizations of a Gaussian noise process, and $\sigma$ is chosen so that the SNR equals 0 dB; next, it shows the tvPS obtained with multi-taper SST, using the first 6 Hermite functions as window tapers (note that the Gaussian used for the simple SST is included among these; it is the lowest-order Hermite function); finally it shows the result of ConceFT, averaging SST results using 20 random linear combinations of those same 6 Hermite functions. In all three cases the OT-distance to the itvPS is given as well. Figure \ref{fig:ESM-4.8} is entirely similar, but now for the signal $s^{\ast}$ rather than $s$.

\begin{figure}[h!]
\begin{centering}
\includegraphics[width=\textwidth]{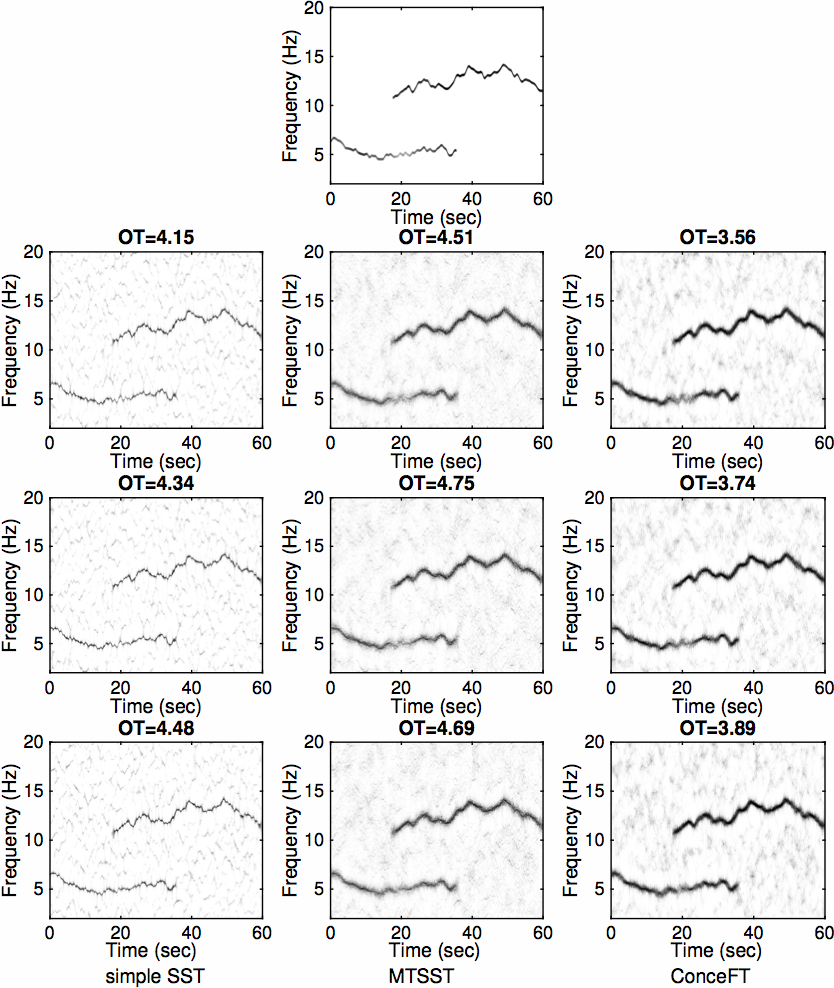}
\end{centering}
\caption{\label{fig:ESM-4.8} 
STFT-based results for noisy versions of the same signal $s^{\ast}$ as in Figures \ref{fig:ESM-4.2}, \ref{fig:ESM-4.3} and the bottom half of Figure 7 in the main paper; the noisy versions considered are also the same realizations as in Figure \ref{fig:ESM-4.3}. Top: itvPS of the clean signal $s^{\ast}$; next three columns: the tvPS of $Y^{\ast}$ with different noises. Each column corresponds to one algorithm; each row to one noise type. Noise types: from top to bottom, in order: Gaussian, ARMA$(1,1)$ and Poisson noise, in each case with SNR of 0 dB. Different approaches: from left to right, in order: SST using a Gaussian window with the bandwidth $h=5/16$; STFT-based multi-taper-SST (using the top 6 Hermite functions: the same Gaussian again, and the next 5 Hermite functions); STFT-based ConceFT (using 20 random combinations of the top 4 Hermite functions). }
\end{figure}

For Figures 7 and 9 in the main paper, and Figures \ref{fig:ESM-4.7} and \ref{fig:ESM-4.8}, we
used the {\sc Matlab} command {\tt imagesc} to generate the plots of the different tvPS 
$\widetilde{\texttt{P}}_Y$, 
in the form {\tt imagesc$(\log\left[1+\widetilde{\texttt{P}}_Y\right]$, $[\, 0 ~ \log(1+q)\, ])$}, where $q>0$; 
the two-entry array $[\,0 ~ \log(1+q)\,]$ in this expression ensures that
the gray scale value at point $(t,\xi)$ in the plot is linearly proportional to
the value of $\log\left[1+\max\left(\widetilde{\texttt{P}}_Y(t, \xi),q\right)\right]$, 
with {\tt white} standing for $0$ and {\tt black} for $q$. 
The same value of $q$ is used for all the tvPS plots.
Plotting $\log(1+\widetilde{\texttt{P}}_Y)$ rather than $\widetilde{\texttt{P}}_Y$ itself
makes it possible to display a wider dynamical range;
fixing the full gray-scale range to cover exactly $[\,0 ,~\log(1+q)\,]$ in each plot 
ensures that the figures present a fair visual comparison of the different tvPS. 
The maximum $q$ also functions as a saturation cut-off: all values of 
$\widetilde{\texttt{P}}_Y(t,\xi)$ exceeding $q$ are rendered as black in the plots, regardless
of the excess. The numerical value of $q$ was picked so that the saturation cut-off
is active on only an exceedingly low number of outliers.

Note that we have systematically normalized the  $\widetilde{\texttt{P}}_Y$ to have the same mean, which we picked to be $5$ here. 
This value is not completely arbitrary: we picked it so that the
dynamical-range compressing function $\log(1+\tilde{\texttt{P}}_Y)$ makes the noise artifacts clearly visible. 
Different values are possible, and the choice depends on the applications; different types of signals and different desired visualization characteristics, typically correspond to different choices for $\theta$.

The other issue is the determination of a ``natural'' or ``good'' value for the cut-off $q$. 
For the purposes of this paper, where we wanted to give a visualization of the goodness-of-fit
to the itvPS of a signal $s$
of the tvPS for different noisy versions $Y$ 
(obtained by adding to $s$ different types of noise, possibly also of
different strength), and compare these for different analysis methods and different 
noise types/strengths, it is most natural to fix a uniform value of $q$ (after uniform normalization
of the $\widetilde{\texttt{P}}_Y$). When ConceFT is used in practice, however, we expect 
to use one particular analysis method, to have at hand only one realization of the
(unknown) noise process, and (of course) not to have a ground truth with which to compare. 
An important role of $q$, for the rescaling used by {\tt imagesc} 
for the visual display, is to downplay an otherwise exaggerated impact from outliers.
To determine $q$, based only on $\widetilde{\texttt{P}}_Y$ itself, it would thus be natural
to choose it as some fixed percentile of the distribution of values of 
$\widetilde{\texttt{P}}_Y$. 

Figures \ref{fig:ESM-4.9} through \ref{fig:ESM-4.11} show the {\em same} $\widetilde{\texttt{P}}_Y$ as also plotted in Figure 7 in the main paper (for CWT) and Figures \ref{fig:ESM-4.7} and  \ref{fig:ESM-4.8} (for STFT), but with a plotting scheme that corresponds more to the realistic signal analysis situation, where we only have the signal at hand. More precisely, for Figures \ref{fig:ESM-4.9} through \ref{fig:ESM-4.11}, we do not normalize the tvPS to have the same total ``energy'', and we determine the cut-off $q$ for each $\tilde{\texttt{P}}_{Y}$ individually; we plot $\boldsymbol{R}\in\RR^{m\times n}$, which is defined as $\boldsymbol{R}_{k,l}:=\log(1+\min\{(\boldsymbol{\widetilde{\texttt{P}}}_Y)_{k,l},q\})$, $k=1,\ldots,m$, $l=1,\ldots,n$, where for each plot, the value $q$ is given by the $99.8\%$ percentile of only the 
$\widetilde{\texttt{P}}_Y$ of the transform/data for that individual plot itself. The 
difference between the two cases is striking, especially for the STFT figures. This suggests
a more thorough exploration would be useful of how to optimally pick $q$ depending on noise and signal
structure and on analysis method chosen; this is beyond the scope of this paper, however.

\begin{figure}[h!]
\begin{centering}
\includegraphics[width=\textwidth]{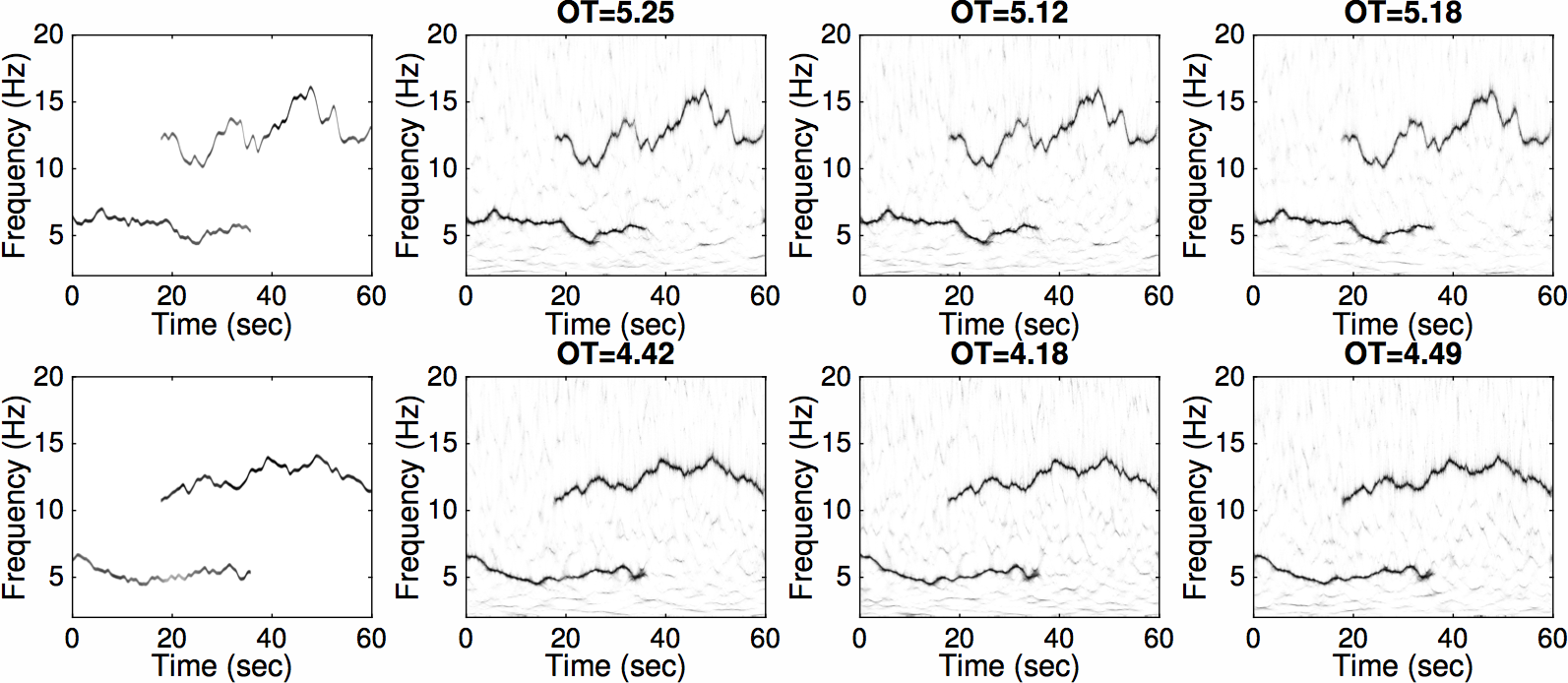}
\end{centering}
\caption{\label{fig:ESM-4.9} This figure contains CWT-based $\conceft$ of the same signal $s$ as in Figure 7 in the main paper, but with a different truncation plotting strategy (see text). 
First row: results for the signal $s$; second row: results for the signal $s^{\ast}$.  Left to right: ideal time-varying TF power spectrum (itvPS) for the clean signal, followed by results of $\conceft$ with Morse wavelets after (in order) Gaussian, ARMA(1,1) or Poisson noise was added, with SNR of 0 dB. The figures are plotted with $q$ chosen to be the $99.8\%$ quantile of each figure, and without normalizing $\tilde{\texttt{P}}_Y$. For each of the tvPS panels, the header gives the OT distance to the corresponding itvPS, which is the same as before, since the $\tilde{\texttt{P}}_Y$ are the same.}
\end{figure}

\begin{figure}[h!]
\begin{centering}
\includegraphics[width=\textwidth]{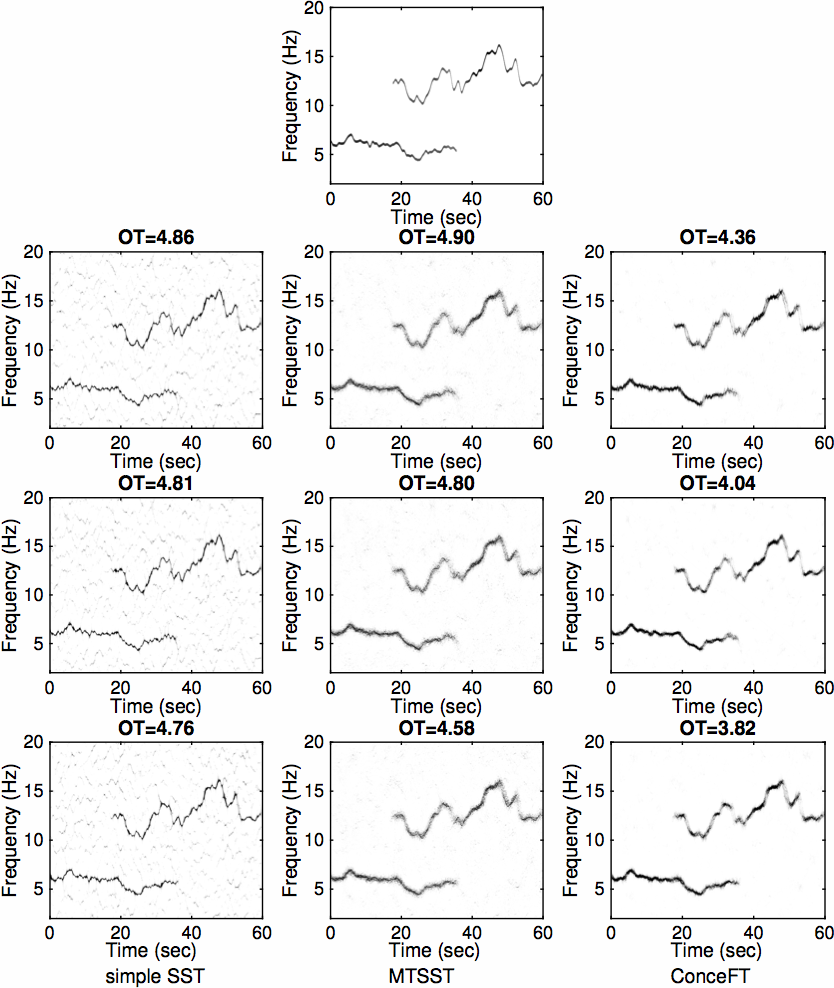}
\end{centering}
\caption{\label{fig:ESM-4.10} 
STFT-based $\conceft$ results for noisy versions of the same signal $s$ as in Figures \ref{fig:ESM-4.7} and the top half of Figure 7 in the main paper; the noisy versions considered are also the same realizations as in Figures 3 and 4 in the main paper. The plotting strategy is different, however (see text). Top: itvPS of the clean signal $s$; next three columns: the tvPS of $Y$ with different noise types. Each column corresponds to one algorithm; each row to one noise type. Noise types: from top to bottom, in order: Gaussian, ARMA$(1,1)$ and Poisson noise, in each case with SNR of 0 dB. Different approaches: from left to right, in order: SST using a Gaussian window with the bandwidth $h=5/16$; STFT-based multi-taper-SST (using the top 6 Hermite functions: the same Gaussian again, and the next 5 Hermite functions); STFT-based ConceFT (using 20 random combinations of the top 4 Hermite functions). The figures are plotted with $q$ chosen to be the $99.8\%$ quantile of each figure, and without normalizing $\tilde{\texttt{P}}_Y$. For each of the tvPS panels, the header gives the OT distance to the corresponding itvPS, which is the same as before, since the $\tilde{\texttt{P}}_Y$ are the same.}
\end{figure}

\begin{figure}[h!]
\begin{centering}
\includegraphics[width=\textwidth]{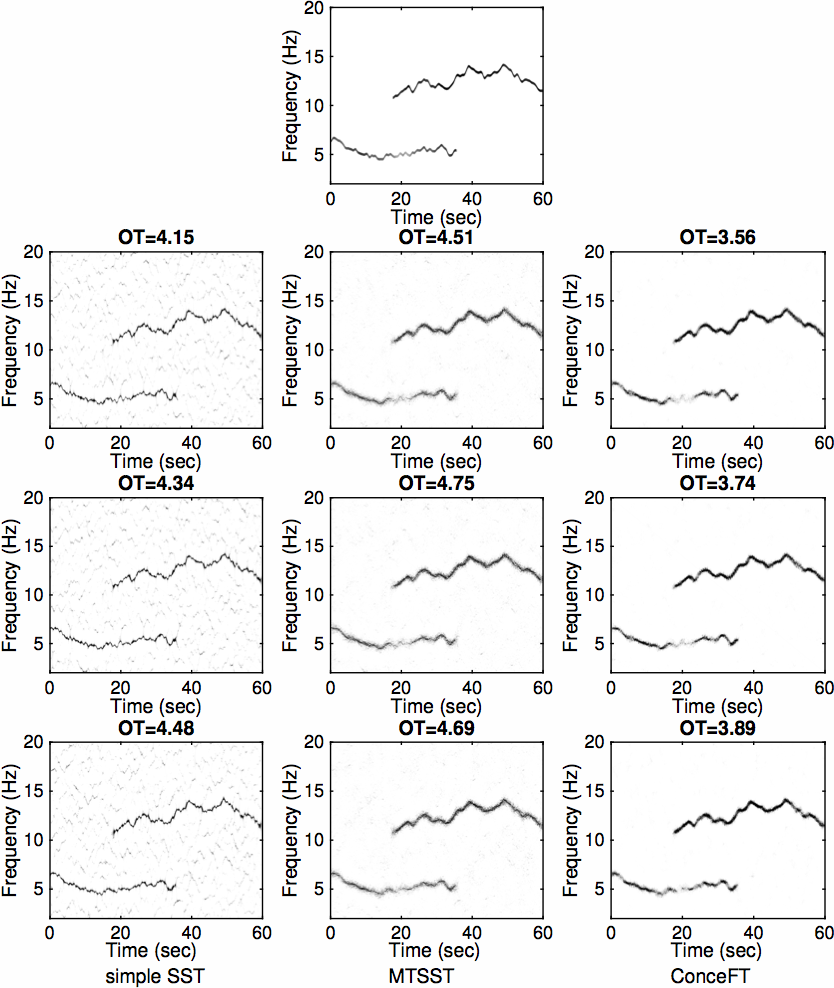}
\end{centering}
\caption{\label{fig:ESM-4.11} 
STFT-based $\conceft$ results for noisy versions of the same signal $s^{\ast}$ as in Figures \ref{fig:ESM-4.8} and the bottom half of Figure 7 in the main paper; the noisy versions considered are also the same realizations as in Figures \ref{fig:ESM-4.2} and \ref{fig:ESM-4.3}. The plotting strategy is different, however (see text). Top: itvPS of the clean signal $s^{\ast}$; next three columns: the tvPS of $Y^{\ast}$ with different noises. Each column corresponds to one algorithm; each row to one noise type. Noise types: from top to bottom, in order: Gaussian, ARMA$(1,1)$ and Poisson noise, in each case with SNR of 0 dB. Different approaches: from left to right, in order: SST using a Gaussian window with the bandwidth $h=5/16$; STFT-based multi-taper-SST (using the top 6 Hermite functions: the same Gaussian again, and the next 5 Hermite functions); STFT-based ConceFT (using 20 random combinations of the top 4 Hermite functions). The figures are plotted with $q$ chosen to be the $99.8\%$ quantile of each figure, and without normalizing $\tilde{\texttt{P}}_Y$. For each of the tvPS panels, the header gives the OT distance to the corresponding itvPS, which is the same as before, since the $\tilde{\texttt{P}}_Y$ are the same.}
\end{figure}

\end{document}